\numberwithin{equation}{section}
\theoremstyle{plain}
\newtheorem{theorem}{Theorem}[section]
\newtheorem{lemma}[theorem]{Lemma}
\newtheorem{proposition}[theorem]{Proposition}
\theoremstyle{definition}
\newtheorem{definition}[theorem]{Definition}
\theoremstyle{remark}
\newtheorem{assumption}[theorem]{Assumption}
\newtheorem{remark}[theorem]{Remark}
\newcommand{\opt}{\rm Opt}
\newcommand{\geo}{\rm Geo}
\newcommand{\ws}{\mathcal{W}_2}
\newcommand{\lmt}[2]{\mathop{\lim}_{{#1} \rightarrow {#2}} }
\newcommand{\lip}[1]{{\mathrm{lip}}({#1})}
\newcommand{\llip}{\mathrm{lip}}
\newcommand{\lmts}[2]{\mathop{\overline{\lim}}_{{#1} \rightarrow {#2}} }
\newcommand{\lmti}[2]{\mathop{\underline{\lim}}_{{#1} \rightarrow {#2}} }
\newcommand{\limstz}{\mathop{\overline{\lim}}_{t\downarrow 0} }
\newcommand{\limitz}{\mathop{\underline{\lim}}_{t\downarrow 0} }
\newcommand{\mm}{\mathfrak m}
\newcommand{\nn}{\mathfrak n}
\newcommand{\ms}{(X,\d,\mm)}
\newcommand{\cd}{{\rm CD}(K, \infty)}
\newcommand{\CD}{{\rm CD}}
\newcommand{\rcdkn}{{\rm RCD}^*(K, N)}
\newcommand{\rcd}{{\rm RCD}(K, \infty)}
\newcommand{\N}{\mathbb{N}}
\newcommand{\R}{\mathbb{R}}
\newcommand{\supp}{\mathop{\rm supp}\nolimits}   
\newcommand{\Lip}{\mathop{\rm Lip}\nolimits}
\renewcommand{\d}{{\mathrm d}}
\newcommand{\dt}{{\d t}}
\newcommand{\ddt}{{\frac \d\dt}}
\newcommand{\D}{{\mathrm D}}
\newcommand{\restr}[1]{\lower3pt\hbox{$|_{#1}$}}
\newcommand{\la}{{\langle}}
\newcommand{\ra}{{\rangle}}
\newcommand{\nchi}{{\raise.3ex\hbox{$\chi$}}}
\newcommand{\weakto}{\rightharpoonup}
\newcommand{\limi}{\varliminf}
\newcommand{\lims}{\varlimsup}
\title{{\bf Angles between curves in metric measure spaces}
}
\begin{document}
\author{ Bang-Xian Han} \thanks
{University of Bonn,
han@iam.uni-bonn.de} 
 \author {Andrea Mondino} \thanks
{University of Warwick,
A.Mondino@warwick.ac.uk}

\date{\today}
\maketitle

\begin{abstract}
The goal of the paper is to study the  angle between two curves in the  framework of   metric (and metric measure) spaces. More precisely, we  give  a new notion of angle between two curves in  a metric space. Such a notion  has a natural interplay with  optimal transportation  and is particularly  well suited for  metric measure spaces satisfying  the curvature-dimension condition. 
Indeed one of the  main results is the validity  of the  cosine formula on $\rcdkn$ metric measure spaces. As a consequence, the new introduced  notions are compatible with the corresponding  classical  ones for Riemannian manifolds, Ricci limit spaces and Alexandrov spaces.  
 \end{abstract}

\textbf{Keywords}: angle, metric measure space, Wasserstein space.\\

\tableofcontents


\section{Introduction}

The  `angle'  between two curves is a basic concept of mathematics, which aims to quantify  the infinitesimal  distance between two crossing curves at a crossing point.
Such a notion is classical in Euclidean and in Riemannian geometries where a global (respectively infinitesimal) scalar product is given: the cosine of the  angle between two crossing curves is by definition  the scalar product of the velocity vectors.
If the space is not given an infinitesimal scalar product, it is a  challenging  problem to define angles in a sensible way.  
 In this paper, we will study this problem in a metric (measure) sense. More precisely,  consider  a  metric space  $(X,\d)$, a point   $p\in X$, and two geodesics  $\gamma, \eta$ such that $\gamma_0=\eta_{0}=p$. Our task is to  propose a meaningful definition of the angle between the curves $\gamma,\eta$ at the point $p$, denoted by  $\angle \gamma p \eta$, and to establish some interesting properties.  

We recall some examples first. Assume that  $\gamma$ and $\eta$ are geodesics, and the space $(X,\d)$ is an Alexandrov space, with upper  or lower  curvature bounds. From the monotonicity implied by the Alexandrov condition, it is  known (see for instance \cite{BBI-A}) that the angle  $\angle \gamma p\eta$  is well defined by the cosine formula:
\[
\angle \gamma p  \eta=\lmt{s,t}{0} \arccos \frac{s^2+t^2-\d^2(\gamma_{s}, \eta_{t})}{2st}=\lmt{t}{0} \arccos \frac{2t^2-\d^2(\gamma_t, \eta_{t})}{2t^2}.
\]
In order to define the angle for geodesics in a more general framework, a crucial observation is that a geodesic can be seen as  gradient flow of the distance function, i.e. a geodesic $\gamma$  `represents' the gradient of $-\d(\gamma_0, \gamma_{1}) \, \d(\gamma_{1},\cdot)$ on each point $\gamma_{t}$. Inspired by the seminal work of De Giorgi on gradient flows \cite{DeG}, given an arbitrary metric space $(X,\d)$ with a geodesic $\gamma:  [0,1]\to X$ and a Lipschitz function $f:X \to \R$, we say that $\gamma$ represents $\nabla f$ at time $0$, or $\gamma$ represents the gradient of the function $f$ at the point $p=\gamma(0)$ if the following inequality holds
\[
\lmti{t}{0} \frac {f(\gamma_{t})-f(\gamma_{0})}{t} \geq  \frac12 |\lip{ f}|^2  +\frac12 |\dot{\gamma}|^2,
\]
where $|\dot{\gamma}|=\d(\gamma_{0}, \gamma_{1})$ is the (constant, metric) speed of the geodesic $\gamma$.
Notice that the opposite inequality 
\[
\lmts{t}{0} \frac {f(\gamma_{t})-f(\gamma_{0})}{t} \leq  \frac12 |\lip{ f}|^2  +\frac12 |\dot{\gamma}|^2
\]
is always true by Leibniz rule and Cauchy-Schwartz inequality.  Hence $\gamma$ represents $\nabla f$ at time $0$ if and only if the equality holds. It is  easily seen that the geodesic $\gamma$ always represents the gradient of $f_{\gamma}(\cdot):=- \d(\gamma_{0}, \gamma_{1}) \, \d(\gamma_{1}, \cdot)$ at the point $\gamma_{0}$ (see for instance Lemma \ref{lem:fgamma}).
We then say  that  the angle $\angle \gamma p\eta$  between two geodesics $\gamma, \eta$ with $\gamma_{0}=\eta_{0}=p$ exists if the limit $\lim_{t \downarrow 0} \frac{f_{\gamma}(\eta_{t})- f_{\gamma}(\eta_{0})}{t}$ exists.  
In this case we set
\begin{equation}\label{eq:defanglef}
\angle \gamma p\eta:=\arccos \left( \frac{1}{|\dot{\gamma}| |\dot{\eta}|} \lim_{t \downarrow 0} \frac{f_{\gamma}(\eta_{t})- f_{\gamma} (\eta_{0})}{t}  \right).
\end{equation}
Notice that in case $(X,\d)$ is the metric space associated to a smooth Riemannian manifold $(M,g)$, the definition \eqref{eq:defanglef} reduces to  the familiar notion of angle 
$$\angle \gamma p\eta= \arccos g_{p}\left( \frac{\nabla f_{\gamma}(p)}{|\dot{\gamma}_{0}|},  \frac{\dot{\eta}_{0}}{|\dot{\eta}_{0}|} \right)= \arccos g_{p}\left( \frac{\dot{\gamma}_{0}}{|\dot{\gamma}_{0}|},  \frac{\dot{\eta}_{0}}{|\dot{\eta}_{0}|} \right).$$

Besides the case of Alexandrov spaces, a class of spaces where the angle is particularly well behaved is the one of Lipschitz-infinitesimally Hillbertian spaces. By definition, a metric measure space $(X,\d,\mm)$ is Lipschitz-infinitesimally Hillbertian if  for any pair of Lipschitz functions $f,g:X\to \R$ both  the limits for $\varepsilon \to 0$  of   $\frac{|\llip(f+\varepsilon g)|^{2}(x)- |\llip(f)|^{2}(x)}{2 \varepsilon}$ and    $\frac{|\llip(g+\varepsilon f)|^{2}(x)- |\llip(g)|^{2}(x)}{2 \varepsilon}$ exist and are equal for $\mm$-a.e. $x\in X$, where $\llip(f)$ is the local Lipschitz constant of $f$ (for the standard definition see \eqref{eq:DeflocLip}).
A remarkable example of  Lipschitz-infinitesimally Hillbertian spaces is given by the $\rcdkn$-spaces, a class of metric measure spaces satisfying Ricci curvature lower bound by $K\in \R$ and dimension upper bound by $N \in (1,\infty)$ in a synthetic sense such that the Laplacian is linear, and  which include as notable subclasses the Alexandrov spaces with curvature bounded below and the Ricci limit spaces   (i.e.  pointed measured Gromov-Hausdorff limits of sequences of  Riemannian manifolds with uniform lower Ricci curvature bounds).
\\In the class of Lipschitz-infinitesimally Hillbertian spaces, the second author \cite{M-A} introduced a notion of `angle between three points'; more precisely for every fixed pair of  points $p,q\in X$, for $\mm$-a.e. $x \in X$ the angle $\angle  pxq$ given by the formula
\begin{equation}\label{eq:angle3pointsIntro}
[0,\pi]\ni  \angle pxq:= \arccos \left( \lim_{\varepsilon\to 0} \frac{|\llip(r_{p}+\varepsilon r_{q})|^{2}(x)- |\llip(r_{p})|^{2}(x)}{2 \varepsilon}     \right),
\end{equation}
 is well defined, unique, and symmetric in $p$ and $q$. Here $r_{p}(\cdot):=\d(p,\cdot)$ is the distance function from $p$. A first result of the present paper is to relate the angle between three points with the angle between two geodesics: in Theorem \ref{thm:def1def2} we prove that if the angle $\angle pxq$ exists in the sense of \cite{M-A}  then also the angle between the geodesics $\gamma^{xp}, \gamma^{xq}$ joining $x$ to $p$ and $x$ to $q$ exists and coincides with the angle between the three points, i.e. $\angle \gamma^{xp}x\gamma^{xq}=\angle pxq$.  In particular it follows that  in a Lipschitz-infinitesimally Hilbertian geodesic space the angle between two geodesics in well defined in an a.e. sense.
 \\
 
 An important class of metric spaces are the spaces of probability measures  over  metric spaces endowed with the quadratic transportation distance: given a metric space $(X,\d)$ denote by  $\mathcal{W}_2:=(\mathcal{P}_2(X), W_2)$  the corresponding Wasserstein space. By using ideas similar to the ones above, together with Otto Calculus (see \cite{O-G}) and the calculus tools developed by Ambrosio-Gigli-Savar\'e \cite{AGS-C} and  Gigli \cite{G-O}, in Subsection \ref{SS:angleWasserstein}  we study in detail the angle between two geodesics  in $\mathcal{W}_2$. In particular if the underlying space $(X,\d,\mm)$ is an $\rcdkn$ space, we get the angle $\angle pxq$ between three points as the limit of the angle between the geodesics in $\mathcal{W}_2$ obtained by joining geodesically diffused approximations of Dirac masses centered at $p$, $x$ and $q$ (see Proposition \ref{prop:AngleWpoints} for the precise statement; see also Proposition \ref{prop:well-2} for a more detailed link with the optimal transport picture).
\\

Besides the case of Alexandrov spaces,  another class of spaces where the notion of angle is quite well understood is given by  Ricci limit spaces.  Indeed  it was proved by Honda  \cite{H-A} that if $(X,\d,\mm)$ is a Ricci-limit space, then  for $\mm$-a.e. $p \in X$  the angle  between two geodesics is well defined and  it satisfies the following one-variable cosine formula:
\begin{equation}\label{eq:angleCosLim}
\cos \angle \gamma p  \eta=\lmt{t}{0}\frac{2t^2-\d^2(\gamma_{t}, \eta_{t})}{2t^2}.
\end{equation}
One of the main goals of the present paper is to extend the validity of the formula \eqref{eq:angleCosLim} to metric measure spaces satisfying Ricci curvature lower bounds in a synthetic sense, the so-called $\rcdkn$-spaces (for the definition and basic properties of such spaces see Section \ref{sec:prel} and  references therein). This is  the content of the next theorem (corresponding to Theorem \ref{thm:cosine} in the body of the manuscript), which is one of the main results of the paper.

\begin{theorem}[Cosine formula for angles in $\rcdkn$ spaces]\label{thm:cosineIntro}
Let $\ms$ be an $\rcdkn$ space and fix $p,q \in X$. Then for  $\mm$-a.e. $x \in X$  there exist unique geodesics from $x$ to $p$ and from $x$ to $q$ denoted by  $\gamma^{xp},\gamma^{xq} \in \geo(X)$ and
\begin{equation}\label{eq:anglepxpgammaIntro}
  \angle \gamma^{xp} x \gamma^{xq}=  \angle p x q=   \lim_{t \to 0} \arccos \frac{2t^2-\d^2(\gamma^{xp}_{t}, \gamma^{xq}_{t})}{2t^{2}} ,\quad  \text{for  $\mm$-a.e. $x$}.
\end{equation}
\end{theorem}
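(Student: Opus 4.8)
The plan is to split the statement into three parts: the $\mm$-a.e.\ existence and uniqueness of the two geodesics, the first equality $\angle\gamma^{xp}x\gamma^{xq}=\angle pxq$, and the cosine formula itself, which is the only substantial point. For the existence and uniqueness I would invoke that every $\rcdkn$ space is essentially non-branching, so that for fixed $p$ there is a unique geodesic from $\mm$-a.e.\ $x$ to $p$; intersecting the two full-measure sets associated with $p$ and with $q$ gives the a.e.\ existence and uniqueness of both $\gamma^{xp}$ and $\gamma^{xq}$. Since $\rcdkn$ spaces are Lipschitz-infinitesimally Hilbertian, the three-point angle of \eqref{eq:angle3pointsIntro} is defined for $\mm$-a.e.\ $x$ by \cite{M-A}, and Theorem \ref{thm:def1def2} applied at each such $x$ yields $\angle\gamma^{xp}x\gamma^{xq}=\angle pxq$. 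Moreover, using that $\llip f=|\nabla f|$ $\mm$-a.e.\ for Lipschitz $f$ (Cheeger's identification, valid in the locally doubling and Poincar\'e setting of $\rcdkn$, see \cite{AGS-C,G-O}) together with $|\nabla r_p|=|\nabla r_q|=1$ a.e., the definition \eqref{eq:angle3pointsIntro} identifies $\cos\angle pxq$ with the pointwise scalar product $\langle\nabla r_p,\nabla r_q\rangle(x)$ for $\mm$-a.e.\ $x$. It therefore remains to prove, for $\mm$-a.e.\ $x$,
\[
\lim_{t\to0}\frac{2t^2-\d^2(\gamma^{xp}_t,\gamma^{xq}_t)}{2t^2}=\langle\nabla r_p,\nabla r_q\rangle(x),
\]
where I may assume $\gamma^{xp},\gamma^{xq}$ parametrized by arc length, so that $\d(x,\gamma^{xp}_t)=\d(x,\gamma^{xq}_t)=t$; the general constant-speed case follows by rescaling, the speeds cancelling in the quotient.

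The organizing idea for this last formula is a second-order expansion along the diagonal of the two-variable function $F(s,t):=\d^2(\gamma^{xp}_s,\gamma^{xq}_t)$, writing $\gamma=\gamma^{xp}$, $\eta=\gamma^{xq}$ and $c:=\langle\nabla r_p,\nabla r_q\rangle(x)$. Along the coordinate axes $F$ is known exactly, $F(s,0)=\d^2(\gamma_s,x)=s^2$ and $F(0,t)=t^2$, so $\partial_sF(0,0)=\partial_tF(0,0)=0$ and $\partial_{ss}F(0,0)=\partial_{tt}F(0,0)=2$. The cross term is first order: for $\mm$-a.e.\ $x$ the first-variation formula for the distance gives $\nabla r_p(x)=-\dot\gamma_0$ and $\nabla r_q(x)=-\dot\eta_0$ (unit tangents), whence $\partial_tF(s,0)=2\,\d(\gamma_s,x)\,\langle-\dot\gamma_0,\dot\eta_0\rangle=-2sc$ and $\partial_s\partial_tF(0,0)=-2c$. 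A formal Taylor expansion along $s=t$ then gives
\[
F(t,t)=\tfrac12\big(\partial_{ss}+2\,\partial_s\partial_t+\partial_{tt}\big)F(0,0)\,t^2+o(t^2)=(2-2c)\,t^2+o(t^2),
\]
which is exactly the desired identity. The entire difficulty is that $F$ is not $C^2$, so this expansion must be justified by sharp one-sided metric estimates rather than by differentiation.

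To make this rigorous I would obtain matching two-sided bounds on $\d^2(\gamma_t,\eta_t)/t^2$ by blow-up. Note first that the triangle inequality alone only yields the non-sharp bound $\d(\gamma_t,\eta_t)\ge t(1-c)+o(t)$, which is strictly too weak (it leaves a gap of order $\sin^2(\angle pxq)$), so genuine second-order information is unavoidable. The clean route is to pass to tangent cones: for $\mm$-a.e.\ $x$ the rescalings $(X,\d/t,x)$ converge in the pointed measured Gromov--Hausdorff sense to a tangent cone which, by the structure theory of $\rcdkn$ spaces, is inner-product-like (Euclidean $\R^k$ a.e.); the rescaled geodesics $\gamma_t,\eta_t$ converge to unit vectors $v,w$, the blow-ups of the distance functions $r_p,r_q$ converge to the linear functions dual to $v,w$ so that $\langle v,w\rangle=\langle\nabla r_p,\nabla r_q\rangle(x)$, and the rescaled distances converge, giving $\d^2(\gamma_t,\eta_t)/t^2\to|v-w|^2=2-2\langle\nabla r_p,\nabla r_q\rangle(x)$.

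The crux, and the step I expect to be hardest, is precisely this second-order control: upgrading the one-sided, a.e.\ first-variation formula (which already suffices for Theorem \ref{thm:def1def2}) to the symmetric, sharp law of cosines. This is where the synthetic Ricci bound and infinitesimal Hilbertianity enter essentially, through the Hilbertianity and regularity of the tangent structure and the compatibility of the blow-ups of $r_p$ and $r_q$ with the scalar product $\langle\nabla r_p,\nabla r_q\rangle$; a self-contained alternative would be to prove the two required sharp metric estimates on $\d(\gamma_t,\eta_t)$ directly, using the representation of $\gamma,\eta$ as gradients of $f_\gamma,f_\eta$ (Lemma \ref{lem:fgamma}) and the Hilbertian polarization of the local slopes.
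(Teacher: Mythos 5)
Your overall architecture coincides with the paper's: split off the $\mm$-a.e.\ existence/uniqueness of the geodesics, obtain $\angle\gamma^{xp}x\gamma^{xq}=\angle pxq$ from Theorem \ref{thm:def1def2} together with the identification $\cos\angle pxq=\la\nabla r_p,\nabla r_q\ra(x)$ (Lemma \ref{prop:well-1}), and then prove the cosine formula by blowing up at an $\mm$-a.e.\ point where the tangent is unique and Euclidean (Theorem \ref{RCD-reg}). The purely metric part of your blow-up is also fine: p-GH convergence of the rescalings does give $\d^2(\gamma^{xp}_t,\gamma^{xq}_t)/t^2\to|v-w|^2$. But the proposal has a genuine gap at exactly the point you yourself flag as ``the crux'': the claim that the blow-ups of $r_p,r_q$ converge to the linear functions dual to $v,w$ \emph{in a sense strong enough} to conclude $\la v,w\ra=\la\nabla r_p,\nabla r_q\ra(x)$. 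Pointed GH convergence plus Arzel\`a--Ascoli only yields uniform (hence $L^2$-strong) convergence of $b^i_p:=\d^i(p,\cdot)-\d^i(p,x)$ to the Busemann function of the limit ray; uniform convergence never controls gradients, so by itself it does not give $\la\nabla b^i_p,\nabla b^i_q\ra\to\la\nabla b^\infty_p,\nabla b^\infty_q\ra$, which is precisely what identifies the Euclidean angle $\angle\bar p O\bar q$ with the Lebesgue value of $\la\nabla r_p,\nabla r_q\ra$ at $x$. Your fallback suggestion (sharp two-sided metric estimates via Lemma \ref{lem:fgamma} and ``Hilbertian polarization of the local slopes'') is not an argument: the first-variation information in Lemma \ref{lem:fgamma} is one-sided and, as you correctly observed, strictly too weak.

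The paper closes this gap with machinery absent from your proposal. First, Lemma \ref{lem:ExUnGeo} provides not just uniqueness but \emph{extendability} of $\gamma^{xp},\gamma^{xq}$ beyond $x$; this yields antipodal points $\hat p,\hat q$ and the two-sided bound $0\le b^i_p+b^i_{\hat p}\le\Phi(1/r_i\,|\,K,N)$ on unit balls (in the limit, the two Busemann functions of a full line in $\R^k$ sum to zero). Second, Proposition \ref{lemma-harmonicapprox} (resting on the auxiliary function of Lemma \ref{lemma:goodfunction}, the Laplacian comparison Theorem \ref{thm:LapComp}, and the comparison/maximum principles of Proposition \ref{prop-harmonic}) produces harmonic replacements ${\bf b}^i_p,{\bf b}^i_q$ with $L^\infty$ and Dirichlet-energy errors tending to $0$. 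Third, Proposition \ref{prop-1order-converge} upgrades the $L^2$-strong convergence of these harmonic functions to $W^{1,2}$-strong convergence; transferring back through the energy estimates gives $W^{1,2}$-strong convergence of $b^i_p,b^i_q$ themselves, hence convergence of the averaged scalar products, which together with choosing $x$ a Lebesgue point of $\la\nabla r_p,\nabla r_q\ra$ yields $\angle pxq=\angle\bar p O\bar q$. Without this harmonic-approximation step, or some other mechanism forcing gradient convergence under the blow-up, your argument cannot be completed.
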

The proof of Theorem \ref{thm:cosineIntro} in independent and different from the one of Honda \cite{H-A} for Ricci limit spaces: indeed Honda argues by getting estimates on the smooth approximating manifolds and then passes to the limit, while our proof for $\rcdkn$ spaces goes by arguing directly on the non smooth space $(X,\d,\mm)$. More precisely, we perform a blow up argument centered at $x$ and use that  for $\mm$-a.e. $x$ the tangent cone is unique and euclidean   \cite{GMR-E, MN-S}. From the technical point of view we  also make use of the  fine convergence  results for Sobolev functions proved in \cite{GMS-C, AH-N}, and we prove  estimates on harmonic approximations of distance functions (see in particular Proposition \ref{lemma-harmonicapprox}). Harmonic approximations of distance functions are well known  for smooth Riemannian manifolds with lower Ricci curvature bounds, and are indeed one of the key technical  tools in the   Cheeger-Colding theory of Ricci limit spaces \cite{CC-I,CC-II,CC-III}; on the other hand for non-smooth $\rcdkn$-spaces it seems they have not  yet appeared in the literature, and we expect them to be  a useful technical tool  in the future development of the field.
\\As a consequence of Theorem \ref{thm:cosineIntro}, we get that our definition of angle between two geodesics agrees (at least in a.e.  sense) with the Alexandrov's definition in case $(X,\d)$ is an Alexandrov space, and with the Honda's definition \cite{H-A} in case $(X,\d,\mm)$ is a Ricci limit space.

\noindent \textbf{Acknowledgement}: The first author would like to  thank Nicola Gigli, Shouhei Honda and  Karl Theodor Sturm for discussions on the topic.\\
%
\section{Preliminaries}\label{sec:prel}
\subsection{Metric measure spaces}
Let $(X,\d)$ be a  complete metric  space.  A continuous map $\gamma: [0,1] \mapsto X$ will be called \emph{curve}. The space of  curves  defined on $[0,1]$ with values in $X$ is denoted by $C([0,1],X)$.  The space $C([0,1],X)$ equipped with the uniform distance  is a  complete metric space.

We define the length of $\gamma$ by
\[
l[\gamma]:=\sup_\tau \mathop{\sum}_{i=1}^{n} \d(\gamma_{t_{i-1}},\gamma_{t_i})
\]
where $\tau:=\{0=t_0, t_1, ..., t_n=1\}$ is a partition of  $[0,1]$, and the $\sup$ is taken over all finite partitions. 
The space $(X, \d)$ is said to be a \emph{length space} if for any $x,y \in X$ we have
\[
\d(x,y) = \inf_\gamma l[\gamma],
\]
where the infimum is taken over all $\gamma \in C([0,1],X)$ connecting $x$ and $y$.
A geodesic from $x$ to $y$ is a curve $\gamma$ such that:
\[
\d(\gamma_s,\gamma_t) = |s - t|\d(\gamma_0,\gamma_1), \quad \forall t, s \in [0,1],~~~\gamma_0 = x,\gamma_1 = y.
\]
The space of all geodesics on $X$ will be denoted by $\geo(X)$. It is a closed subset of $C([0,1],X)$.

Given $p\in [1,+\infty]$ and a curve $\gamma$, we say that $\gamma$ belongs to $AC^p([0,1],X)$ if
\[
\d(\gamma_s,\gamma_t) \leq \int_s^t G(r) \,\d r, \quad \forall t,s \in [0,1], ~s<t \; ,
\]
for some $G \in L^p([0,1])$. In particular, the case $p=1$ corresponds to  absolutely continuous curves, whose class is denoted by $AC([0,1], X)$.
It is known that for $\gamma \in  AC([0,1], X)$, there exists an a.e. minimal function $G$ satisfying this inequality, called  metric derivative  and denoted by  $|\dot{\gamma}|$. The metric derivative  of $\gamma$ can be  computed for a.e. $t\in [0,1]$ as
\[
|\dot{\gamma}_t|:=\lmt{h}{0}\frac{\d(\gamma_{t+h},\gamma_t)}{|h|}.
\]
It is known that (see for example \cite{BBI-A}) the length of a curve $\gamma \in  AC([0,1], X)$ can be computed as
\[
l[\gamma]:=\int_0^1 |\dot{\gamma}_t|\,\dt.
\]
In particular, on a length space $X$ we have
\[
\d(x,y) = \inf_\gamma \int_0^1 |\dot{\gamma}_t|\,\dt
\]
where the infimum is taken among all $\gamma \in AC([0,1],X)$ which connect $x$ and $y$.

Given $f : X \mapsto \mathbb{R}$, the local Lipschitz constant $\lip{ f}: X \mapsto [0, \infty]$ is defined as
\begin{equation}\label{eq:DeflocLip}
\lip{f}(x):= \mathop{\overline{\lim}}_{y\rightarrow x}\frac{|f(y) -f(x)|}{\d(x, y)}
\end{equation}
if $x$ is not isolated, $0$ otherwise, while the  (global) Lipschitz constant  is defined as
\[
\Lip(f):= \mathop{\sup}_{x \neq y} \frac{|f(y)-f(x)|}{\d(x,y)}.
\]
If $(X, \d)$ is a length space, we have $\Lip(f)=\mathop{\sup}_{x}  \lip{f}(x)$.

We are not only  interested in metric structures, but also in the interaction between metric and measure. For the metric measure space $\ms$, basic assumptions used in this paper are:

\begin{assumption}\label{assumption}
The metric measure space $\ms$ satisfies:
\begin{itemize}
\item $(X,\d)$ is a complete and separable length space,
\item $\mm$ is a non-negative Borel  measure with respect to $\d$ and finite on bounded sets,
\item $\supp{\mm}=X$.
\end{itemize}
\end{assumption}

In this paper, we will often assume that the metric measure  space $\ms$  satisfies the  $\rcdkn$ condition, for some $K \in \R$ and $N \in [1, \infty]$ (when $N=\infty$ it is denoted by $\rcd$ ). The $\rcd$ and $\rcdkn$ conditions are refinements of the curvature-dimensions proposed by Lott-Sturm-Villani (see \cite{LV-R} and \cite{Sturm06I, Sturm06II} for $\cd$), and Bacher-Sturm (see \cite{BS-L} for ${\rm CD}^*(K, N)$) in order to isolate the non-smooth `Riemannian' structures from the `Finslerian' ones.  More precisely, the  ${\rm {RCD}}$ conditions are obtained by reinforcing the corresponding  ${\rm CD}$ conditions by adding the requirement that the Sobolev space  $W^{1,2}\ms$ is a Hilbert space (see the next subsection for more details).  It is then clear that the following relations hold
   \[
\rcdkn \subset {\rm CD}^*(K, N) ~~\text{and}~~ \rcd \subset \cd;
 \]
 moreover one has that
 \[
\rcdkn \subset \rcd ~~\text{and}~~{\rm CD}^*(K, N) \subset \cd.
 \]
 It is known that, for finite $N\in [1,\infty)$, a $ {\rm CD}^*(K, N)$ space $\ms$ satisfies  the following properties:
  \begin{itemize}
\item $\ms$ is  locally doubling  and therefore a locally compact space, \cite{BS-L};
\item $\ms$ supports a local Poincar\'e inequality, \cite{R-L}.
\end{itemize}
For more details about  $\rcd$ and $\rcdkn$ spaces, we refer  to \cite{AGS-M, AGMR-R, AMS-N, EKS-O}.

\subsection{Optimal transport and Sobolev functions}

The set of Borel probability measures on $(X,\d)$ will be denoted by $\mathcal{P}(X)$. We also use
$\mathcal{P}_2(X)\subseteq \mathcal{P}(X)$  to denote the set of measures with finite second moment, i.e. $\mu \in \mathcal{P}_2(X)$ if $\mu \in \mathcal{P}(X)$
and $\int \d^2(x,x_0)\, \d\mu(x) < +\infty$ for some (and thus every) $x_0 \in X$.
For $t\in [0,1]$, the evaluation map ${\rm e}_t: C([0,1],X) \rightarrow X$ is given  by
\[
{\rm e}_t(\gamma) := \gamma_t,\qquad\qquad\forall \gamma \in C([0,1],X).
\]
The space  $\mathcal{P}_2(X)$  is naturally endowed with the quadratic transportation distance ${W}_2$ defined by: 
\begin{equation}\label{eq:defW2}
{W}_2^2 (\mu,\nu) := \mathop{\inf}_{\pi} \int_{X\times X} \d^2(x,y)\, \d\pi(x,y),
\end{equation}
where the $\inf$ is taken among all couplings  $\pi \in \mathcal{P}(X\times X)$ with marginals $\mu$ and $\nu$, i.e. $(P_1)_\sharp \pi= \mu$ and  $(P_2)_\sharp \pi= \nu$ where $P_i$, $i=1,2$ are the projection maps onto the first and second coordinate respectively. The metric space $(\mathcal{P}_2(X),{W}_2)$ will be denoted by  $\mathcal{W}_2$. Let us recall that the infimum in the Kantorovich problem \eqref{eq:defW2} is always attained by an optimal coupling  $\pi$. We denote the set of optimal couplings between $\mu$ and $\nu$ by $\opt(\mu, \nu)$. Below we recall some fundamental properties of the metric space $\ws$ we will use throughout the paper.

\begin{proposition}[Geodesics in the Wasserstein space]

Let $(X,\d)$ be a  metric space and fix  $\mu_{0}, \mu_{1} \in \mathcal{P}_2(X)$. Then the curve $(\mu_t)_{t \in [0,1]} \subset \ws$
is a constant speed geodesic connecting $\mu_{0}$ and $\mu_{1}$, i.e. it satisfies
\begin{equation}
{W}_2(\mu_s,\mu_t)=|s-t| W_2(\mu_0,\mu_1), \quad \forall s,t \in [0,1]
\end{equation}
if and only if there exists  $\Pi \in  \mathcal{P}(\geo(X)) \subseteq \mathcal{P}(C([0,1],X)) $, called  optimal dynamical plan (or simply optimal plan), such that 
\[
\mu_t = ({{\rm e}_t})_\sharp \Pi \; \; \forall t \in [0,1]  \quad \text{and}\quad  ({{\rm e}_0}, {{\rm e}_1})_\sharp \Pi \in \opt(\mu_0, \mu_1).
\]
The set of optimal dynamical plans from $\mu_{0}$ to $\mu_{1}$ is denoted  by $\opt\geo(\mu_0, \mu_1)$.
\\Moreover, if $X$ is a  geodesic space, then  $\mathcal{W}_2$ is  also geodesic.
\end{proposition}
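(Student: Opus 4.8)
The plan is to prove the two implications of the equivalence separately and then deduce the final geodesic-space statement. For the \emph{``if''} direction, suppose an optimal plan $\Pi \in \mathcal{P}(\geo(X))$ is given. For any $s,t \in [0,1]$ the measure $({\rm e}_s,{\rm e}_t)_\sharp \Pi$ is a coupling of $\mu_s$ and $\mu_t$, so using that $\Pi$ is concentrated on $\geo(X)$ (whence $\d(\gamma_s,\gamma_t)=|s-t|\,\d(\gamma_0,\gamma_1)$ for $\Pi$-a.e.\ $\gamma$) together with the optimality of $({\rm e}_0,{\rm e}_1)_\sharp \Pi$, I would estimate
\[
W_2^2(\mu_s,\mu_t) \le \int \d^2(\gamma_s,\gamma_t)\, \d\Pi(\gamma) = |s-t|^2 \int \d^2(\gamma_0,\gamma_1)\, \d\Pi(\gamma) = |s-t|^2 W_2^2(\mu_0,\mu_1).
\]
The triangle-inequality chain $W_2(\mu_0,\mu_1) \le W_2(\mu_0,\mu_s) + W_2(\mu_s,\mu_t) + W_2(\mu_t,\mu_1)$ then forces all these bounds to be equalities, yielding the constant-speed geodesic identity.

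For the \emph{``only if''} direction --- which I expect to be the main difficulty --- I would build the plan by a dyadic midpoint construction. The basic step shows that the intermediate measures provide midpoints. Gluing an optimal coupling of $(\mu_0,\mu_{1/2})$ with one of $(\mu_{1/2},\mu_1)$ into a measure $\sigma$ on $X^3$, its consecutive marginals are optimal, so $\int \d^2(x_0,x_{1/2})\,\d\sigma = W_2^2(\mu_0,\mu_{1/2})$ and similarly for the second pair; combined with the geodesic identities $W_2(\mu_0,\mu_{1/2}) = W_2(\mu_{1/2},\mu_1) = \tfrac12 W_2(\mu_0,\mu_1)$, the left-hand side below equals $W_2(\mu_0,\mu_1)$, whereas by Minkowski's inequality followed by the triangle inequality
\[
\left(\int \d^2(x_0,x_{1/2})\, \d\sigma\right)^{1/2} + \left(\int \d^2(x_{1/2},x_1)\, \d\sigma\right)^{1/2} \ge \left(\int \d^2(x_0,x_1)\,\d\sigma\right)^{1/2} \ge W_2(\mu_0,\mu_1).
\]
Since the two ends coincide, every inequality is an equality: equality in Minkowski forces $\d(x_0,x_{1/2})$ and $\d(x_{1/2},x_1)$ to be proportional $\sigma$-a.e., and equality in the triangle inequality forces $\d(x_0,x_1) = \d(x_0,x_{1/2}) + \d(x_{1/2},x_1)$, so that $x_{1/2}$ is a midpoint of $x_0,x_1$ for $\sigma$-a.e.\ triple.

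To pass from these midpoints to a genuine plan, I would iterate the midpoint insertion consistently across dyadic times: starting from an optimal coupling of $(\mu_0,\mu_1)$, at each level I glue in the midpoint measures coming from $\mu$ at the new dyadic times, obtaining a projective family of measures $\sigma_n$ on the finite dyadic skeletons that is consistent under the natural projections. By Kolmogorov's extension theorem (using completeness and separability of $X$) this family defines a single Borel probability measure on the product of copies of $X$ indexed by the dyadic rationals of $[0,1]$; the midpoint property at every level shows that for a.e.\ sample the coordinate map $t \mapsto x_t$ is $W_2(\mu_0,\mu_1)$-Lipschitz with constant speed on the dyadics, hence extends uniquely, by completeness of $X$, to an element of $\geo(X)$. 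Pushing forward under this extension map yields the desired $\Pi \in \mathcal{P}(\geo(X))$, with $({\rm e}_t)_\sharp \Pi = \mu_t$ on dyadic $t$ (hence on all $t$ by continuity) and $({\rm e}_0,{\rm e}_1)_\sharp \Pi$ optimal by construction. The delicate points are maintaining the consistency of the family under refinement and checking that the a.e.-defined dyadic curves are genuinely geodesic --- constant speed with no length lost --- which is exactly what the equality cases in Minkowski and the triangle inequality guarantee.

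Finally, for the \emph{``moreover''} assertion, let $X$ be geodesic and fix $\mu_0,\mu_1 \in \mathcal{P}_2(X)$ together with an optimal coupling $\pi \in \opt(\mu_0,\mu_1)$. Since $({\rm e}_0,{\rm e}_1):\geo(X) \to X\times X$ is continuous with nonempty closed fibres (each pair of points is joined by a geodesic), a measurable selection theorem provides a Borel map $G: X\times X \to \geo(X)$ with ${\rm e}_0\circ G = P_1$ and ${\rm e}_1 \circ G = P_2$. Setting $\Pi := G_\sharp \pi \in \mathcal{P}(\geo(X))$, the coupling $({\rm e}_0,{\rm e}_1)_\sharp \Pi = \pi$ is optimal, so the already-proven ``if'' direction shows that $t\mapsto ({\rm e}_t)_\sharp\Pi$ is a constant-speed geodesic joining $\mu_0$ and $\mu_1$; hence $\ws$ is geodesic.
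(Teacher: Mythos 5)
The paper does not actually prove this proposition at all: it is recalled as a standard fact of optimal transport, with the superposition principle (Theorem \ref{prop:superposition}) quoted immediately afterwards, so your argument can only be compared with the standard proofs. Your ``if'' direction is correct, and your ``moreover'' argument via measurable selection of geodesics is the standard one and is fine. The ``only if'' direction, however, contains a genuine gap at exactly the point you yourself flag as delicate: the claimed consistency of the dyadic family $\{\sigma_n\}$ is never established, and as you describe the construction it is actually false. Your midpoint lemma produces a triple measure whose outer $(x_0,x_1)$-marginal is \emph{some} optimal coupling --- namely the composition of the two optimal couplings you glued --- and there is no reason it should coincide with the coupling already present at the previous level; consequently the refined measure need not project onto the coarser one, and Kolmogorov's theorem does not apply. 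Worse, for a badly chosen starting coupling no consistent refinement exists at all. Take $X=\R^2$ and the constant-speed $\ws$-geodesic $\mu_t:=\frac12\left(\delta_{(t,0)}+\delta_{(1-t,1)}\right)$: every coupling of $\mu_0=\frac12(\delta_{(0,0)}+\delta_{(1,1)})$ and $\mu_1=\frac12(\delta_{(1,0)}+\delta_{(0,1)})$ has cost $1$, so the ``vertical'' coupling $\pi$ matching $(0,0)\mapsto(0,1)$ and $(1,1)\mapsto(1,0)$ is optimal; but the Euclidean midpoints of these two pairs are $(0,\tfrac12)$ and $(1,\tfrac12)$, neither of which lies in $\supp\mu_{1/2}=\{(\tfrac12,0),(\tfrac12,1)\}$, so no measure on triples with outer marginal $\pi$ and middle marginal $\mu_{1/2}$ can be concentrated on midpoint configurations. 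Hence ``starting from an optimal coupling of $(\mu_0,\mu_1)$'' and inserting midpoints cannot yield a projective family.

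The gap is reparable, but it requires an additional idea, not just bookkeeping. One option: build $\sigma_n$ at each level $n$ \emph{independently}, by gluing optimal couplings between all consecutive dyadic measures (your equality argument then shows that $\sigma_n$-a.e.\ configuration is an equally spaced chain and that its $(x_0,x_1)$-marginal is optimal); since all these measures have the fixed marginals $\mu_t$, their restrictions to each fixed finite skeleton form a tight family, so a diagonal argument yields a subsequence whose restrictions converge weakly at every level; the limit measures \emph{are} consistent, still concentrated on equally spaced chains (a closed condition) and still optimal at the endpoints, and only then can one invoke Kolmogorov together with your Lipschitz-extension step. Alternatively --- shorter, and in the spirit of the tools the paper actually quotes --- use the superposition principle of Theorem \ref{prop:superposition}: a constant-speed geodesic lies in $AC^2([0,1],\mathcal{P}_2(X))$ with $|\dot\mu_t|\equiv W_2(\mu_0,\mu_1)$, and its lift $\Pi$ satisfies
\begin{equation*}
W_2^2(\mu_0,\mu_1)\le \int \d^2(\gamma_0,\gamma_1)\,\d\Pi
\le \int\Big(\int_0^1|\dot\gamma_t|\,\d t\Big)^2\d\Pi
\le \int\int_0^1|\dot\gamma_t|^2\,\d t\,\d\Pi
=\int_0^1|\dot\mu_t|^2\,\d t= W_2^2(\mu_0,\mu_1),
\end{equation*}
so all inequalities are equalities: $({\rm e}_0,{\rm e}_1)_\sharp\Pi$ is optimal and, by the equality cases in the triangle and Cauchy--Schwarz inequalities, $\Pi$-a.e.\ $\gamma$ is a constant-speed geodesic. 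This yields the ``only if'' direction in a few lines and avoids the consistency issue entirely.
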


Absolutely continuous curves in $\mathcal{W}_2$  are characterized by the following theorem:
\begin{theorem}[Superposition principle, \cite{L-C}]\label{prop:superposition} Let $(X,\d)$ be a complete and separable metric space and let $(\mu_t) \in AC^2([0,1], \mathcal{P}_2(X))$. Then there exists a measure $\Pi \in \mathcal{P}(C([0,1],X))$ concentrated on
$AC^2([0,1],X)$ such that:
\begin{eqnarray*}
({\rm e}_t)_\sharp \Pi &=& \mu_t,~~~~~~\forall t \in [0,1]\\
\int |\dot{\gamma}_t|^2 \,\d \Pi(\gamma) &=&|\dot{\mu}_t|^2, \quad \text{for a.e}.~t\in [0,1].
\end{eqnarray*}

Moreover, the infimum of the energy $\int_0^1\int |\dot{\gamma}_t|^2 \,\d\Pi'(\gamma) \,\dt$ among all the  $\Pi' \in \mathcal{P}(C([0,1],X))$ satisfying $({\rm e}_t)_\sharp \Pi' = \mu_t$ for every $t\in[0,1]$ is attained  by such  $\Pi$.
\end{theorem}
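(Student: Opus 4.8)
I would prove this characterization of absolutely continuous Wasserstein curves by a time-discretization and weak-compactness argument, the optimal lifting being produced as a limit of piecewise-geodesic plans. Write $E:=\int_0^1 |\dot\mu_t|^2\,\dt<\infty$, finite since $(\mu_t)\in\ac2$. I would first carry out the argument assuming $(X,\d)$ is geodesic; the general complete and separable case then follows by fixing an isometric embedding of $X$ into a separable geodesic (e.g. Banach) space $V$, applying the geodesic case to the push-forward curve in $\mathcal{P}_2(V)$, and observing that the resulting plan is concentrated on curves valued in the closed set $X\subset V$, hence descends to a plan on $\cur$.

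The first step builds the discrete plans. For $n\in\N$ set $t_i^n:=i/n$, choose optimal couplings $\pi_i^n\in\opt(\mu_{t_{i-1}^n},\mu_{t_i^n})$, and glue them (gluing lemma on the Polish space $X^{n+1}$) into a measure whose consecutive two-dimensional marginals are the $\pi_i^n$. Using a measurable selection of constant-speed geodesics joining the $i$-th to the $(i+1)$-th coordinate, I push this measure forward to a plan $\Pi^n\in\mathcal{P}(\cur)$ concentrated on piecewise-geodesic curves, so that $({\rm e}_{t_i^n})_\sharp\Pi^n=\mu_{t_i^n}$ at every grid point. On the $i$-th interval each such curve has constant speed $n\,\d(\gamma_{t_{i-1}^n},\gamma_{t_i^n})$, and combining this with the Cauchy--Schwarz bound $W_2^2(\mu_{t_{i-1}^n},\mu_{t_i^n})\le \tfrac1n\int_{t_{i-1}^n}^{t_i^n}|\dot\mu_r|^2\,\d r$ gives the uniform energy estimate
\[
\int\!\!\int_0^1|\dot\gamma_t|^2\,\dt\,\d\Pi^n(\gamma)\ \le\ \sum_{i=1}^n n\,W_2^2(\mu_{t_{i-1}^n},\mu_{t_i^n})\ \le\ E .
\]

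The second step extracts a limit. The time-marginals $\mu_t$ are tight (single Borel probabilities on a Polish space), and the energy bound yields a uniform modulus-of-continuity-in-probability estimate, namely $\int \d(\gamma_s,\gamma_t)^2\,\d\Pi^n\le (t-s)\int_s^t\!\int|\dot\gamma_r|^2\,\d\Pi^n\,\d r$; together these verify a Prokhorov-type tightness criterion for $\{\Pi^n\}$ on $\cur$ (via the Arzel\`a--Ascoli description of compacta there), so I may pass to a subsequence $\Pi^{n_k}\weakto\Pi$. Since each ${\rm e}_t$ is continuous, $({\rm e}_t)_\sharp\Pi=\mu_t$ holds first at the grid times by weak convergence, and then for every $t$ by continuity of $t\mapsto\mu_t$ in $\ws$ and of $t\mapsto({\rm e}_t)_\sharp\Pi$. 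Lower semicontinuity of $\gamma\mapsto\int_0^1|\dot\gamma_t|^2\,\dt$ under uniform convergence, together with the energy bound, gives $\int\!\!\int_0^1|\dot\gamma_t|^2\,\dt\,\d\Pi\le E$; in particular $\Pi$ is concentrated on $\ac2$. Conversely, for any competitor $\Pi'$ with $({\rm e}_t)_\sharp\Pi'=\mu_t$, the coupling $({\rm e}_s,{\rm e}_t)_\sharp\Pi'$ yields $W_2^2(\mu_s,\mu_t)\le\int\d(\gamma_s,\gamma_t)^2\,\d\Pi'\le(t-s)\int_s^t\!\int|\dot\gamma_r|^2\,\d\Pi'\,\d r$, whence dividing by $(t-s)^2$ and letting $t\downarrow s$ gives $|\dot\mu_s|^2\le\int|\dot\gamma_s|^2\,\d\Pi'$ a.e., so that $E\le\int\!\!\int_0^1|\dot\gamma|^2\,\d\Pi'$. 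This proves minimality of $\Pi$, and applying the same pointwise inequality to $\Pi'=\Pi$ while comparing integrals (both equal to $E$) forces the a.e. identity $\int|\dot\gamma_t|^2\,\d\Pi=|\dot\mu_t|^2$.

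I expect the main obstacle to be the second step: making rigorous the tightness of $\{\Pi^n\}$ on the path space $\cur$ — one must feed the tightness of the time-marginals together with the energy-driven equicontinuity estimate into the correct path-space tightness criterion — and, hand in hand with it, the lower semicontinuity of the action functional along uniformly convergent curves. The measurable geodesic selection in the first step and the reduction to a geodesic ambient space in the non-geodesic case are the remaining points that require care.
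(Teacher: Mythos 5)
The paper contains no proof of this statement: it is imported verbatim from Lisini's paper \cite{L-C}, so the only meaningful comparison is with that reference rather than with an in-paper argument. Your proposal is, in all essentials, a reconstruction of Lisini's own strategy --- time discretization with optimal couplings glued on $X^{n+1}$, constant-speed geodesic interpolation after an isometric embedding of $X$ into a separable Banach space, tightness and extraction of a weak limit, lower semicontinuity of the action for one inequality, and the coupling estimate $W_2^2(\mu_s,\mu_t)\le (t-s)\int_s^t\int |\dot\gamma_r|^2\,\d\Pi'(\gamma)\,\d r$ for the converse --- and the outline is correct. Three points deserve more care than you give them, though none is a genuine gap. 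First, for path-space tightness you need \emph{uniform} tightness of the family $\{\mu_t\}_{t\in[0,1]}$, not tightness of each $\mu_t$ separately (your parenthetical suggests the latter suffices): for fixed $t$ the marginals $({\rm e}_t)_\sharp\Pi^n$ are geodesic interpolants between $\mu_{t_{i-1}^n}$ and $\mu_{t_i^n}$ with the grid changing in $n$, so you must combine the equicontinuity-in-probability estimate with uniform tightness of all the $\mu_s$; the latter follows from Prokhorov's theorem because $t\mapsto\mu_t$ is $W_2$-continuous on the compact interval $[0,1]$, hence $\{\mu_t\}$ is weakly compact in $\mathcal{P}(X)$. Second, with grids $t_i^n=i/n$ a fixed time need not be a grid time infinitely often along the subsequence $n_k$; either take nested dyadic grids, or identify $({\rm e}_t)_\sharp\Pi=\mu_t$ directly at every $t$ via the bound $\int \d^2(\gamma_t,\gamma_{t^n})\,\d\Pi^n(\gamma)\le |t-t^n|\,E\to 0$, where $t^n$ is the grid point nearest to $t$. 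Third, in the reduction to the geodesic case, concentration of the limit plan on curves valued in $\iota(X)$ should be argued explicitly: for each $t$ in a countable dense set, $\Pi$-a.e.\ curve satisfies $\gamma_t\in\iota(X)$, and $\iota(X)$ is closed in $V$ since $X$ is complete, so the claim follows by continuity of the curves. With these routine repairs your argument is complete and coincides with the proof in \cite{L-C}.
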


\begin{definition}[Test plan] Let $(X,\d,\mm)$ be a metric measure space and $\Pi \in \mathcal{P}(C([0,1],X))$. We say  that $\Pi \in \mathcal{P}(C([0,1],X))$  has \emph{bounded compression} provided
there exists $C >0$ such that
\[
({\rm e}_t)_\sharp \Pi \leq C\mm,~~~\forall t \in [0,1].
\]
We say that $\Pi$ is a \emph{test plan} if it has bounded compression, is concentrated on $AC^2([0,1],X)$ and
\[
\int_0^1\int |\dot{\gamma}_t|^2 \,\d\Pi(\gamma)\, \dt < +\infty.
\]
\end{definition}

The notion of Sobolev function is given in duality with that of test plan:
\begin{definition}[The Sobolev class $S^{2}(X)$] Let $\ms$ be a metric measure space. A Borel function $f : X \rightarrow \mathbb{R}$ belongs to the
Sobolev class $S^2(X)$ (resp. $S^2_{loc}(X)$) provided there exists a non-negative function $G\in L^2(X,\mm)$ (resp. $L^2_{loc}(X,\mm)$) such that
\[
\int |f(\gamma_1)- f(\gamma_0)|\, \d\Pi(\gamma) \leq \int \int_0^1 G(\gamma_s)|\dot{\gamma}_s|\, \d s\, \d\Pi(\gamma),  \quad\forall \text{ test plan }\;\Pi.
\]
In this case, $G$ is called a 2-weak upper gradient of $f$, or simply  weak upper gradient.
\end{definition}
It is known, see e.g.\ \cite{AGS-C}, that there exists a minimal function $G$ in the $\mm$-a.e. sense among all the weak upper gradients of $f$. We denote such minimal function
by $|\D f|$ or $|\D f|_X$ to emphasize which  space we are considering and call it  \emph{minimal weak upper gradient}. Notice that if $f$ is Lipschitz, then $|\D f|\leq \lip f$ $\mm$-a.e., because $\lip f$ is a weak upper gradient of $f$.

It is known that the locality holds for $|\D f|$, i.e. $|\D f|=|\D g|$ $\mm$-a.e. on the set $\{ f=g\}$, moreover $S^2_{loc}\ms$ is a vector space and the inequality
\begin{equation}
\label{eq:sumd}
|\D(\alpha f+\beta g)|\leq |\alpha||\D f|+|\beta||\D g|,\quad\mm\text{-a.e.},
\end{equation}
holds for every $f,g\in S^2_{loc}\ms$ and $\alpha,\beta\in \R$. Moreover, the space $S^2_{loc}\cap L^\infty_{loc}\ms$ is an algebra, with the inequality
\begin{equation}
\label{eq:leibn}
|\D(fg)|\leq |f||\D g|+|g||\D f|, \quad\mm\text{-a.e.},
\end{equation}
being valid for any $f,g\in S^2_{loc}\cap L^\infty_{loc}\ms$.

The Sobolev space $W^{1,2}\ms$, also denoted by $W^{1,2}(X)$ for short, is defined as $$W^{1,2}(X):= S^2\ms  \cap L^2(X,\mm)$$ and is  endowed with the norm
\[
\|f\|^2_{W^{1,2}(X)}:=\|f\|^2_{L^2(X,\mm)}+\||\D f|\|^2_{L^2(X,\mm)}.
\]
$W^{1,2}(X)$ is always a Banach space, but in general it is not a Hilbert space.  $\ms$ is said infinitesimally Hilbertian  if $W^{1,2}(X)$ is a Hilbert space.

\bigskip

On an infinitesimally Hilbertian space, we have a natural pointwise inner product $\la\nabla \cdot, \nabla \cdot\ra: [W^{1,2}(X)]^2 \mapsto L^1(X)$ defined by
\[
\la \nabla f, \nabla g\ra:= \frac14 \Big{(}|\D (f+g)|^2-|\D (f-g)|^2\Big{)}.
\]

In order to prove the cosine formula we will use properties of harmonic functions in open sets of a m.m. space. Let us define the relevant quantities and recall the properties we will use; for simplicity, as always we assume the space $(X,\d)$ to be proper, complete and separable, and the measure $\mm$ to be finite on bounded sets (this indeed is the geometric case correspoding to $\rcdkn$ spaces, for $N<\infty$ we will be interested in). For the general case see for instance \cite{BB-N, G-O,GM-A}.

\begin{definition}[Sobolev  classes in $\Omega$ ]
Let $\ms$ be a m.m. space and let $\Omega\subset X$ be an open subset. The space $S^{2}(\Omega)$ is the space of Borel functions $f:\Omega \to \R$ such that $\chi f \in S^{2}(X)$ for any Lipschitz function $\chi : X \to [0,1]$ such that $\supp \chi \subset \Omega$, where $\chi f $ is taken  $0$ by definition on $X\setminus \Omega$. Let $W^{1,2}(\Omega):=L^{2}(\Omega)\cap S^{2}(\Omega)$  be the corresponding Sobolev space endowed with the natural norm, and  denote by $W^{1,2}_0(\Omega) \subset W^{1,2}(X)$ the closure of compactly supported Lipschitz functions on $\Omega$.
\end{definition}

\begin{definition}[Measure valued Laplacian]
Let $\ms$ be a m.m. space,  $\Omega\subset X$ an open subset and $f:\Omega \to \R$ a Borel function.  We say that $f$ is in the domain of the Laplacian in $\Omega$, and write $f \in {\rm D}({\bf \Delta}, \Omega)$  provided $f \in S^{2}(\Omega)$
and there exists a locally finite Borel measure $\mu$ on $\Omega$ such that for any $\varphi \in {\rm LIP}(X)$ with compact support contained in $\Omega$ it holds
$$\int_{X} \varphi \,\d \mu= -\int_{X} \la \nabla \varphi, \nabla f \ra    \, \d  \mm.  $$
In this case the measure $\mu$ is unique and we denote it by ${\bf \Delta} f\llcorner \Omega$, or simply  ${\bf \Delta} f$. If ${\bf \Delta} f \llcorner \Omega \ll \mm$, we denote its density  with respect to $\mm$ by $\Delta f \llcorner \Omega$ or simply by $\Delta f$.
\\ A function $f \in {\rm D}({\bf \Delta}, \Omega)$ is said to be \emph{harmonic in $\Omega$}, or simply harmonic, if   ${\bf \Delta} f \llcorner \Omega =0$.
\end{definition}

For simplicity we state the next proposition  for  $\rcdkn$ space, though it is valid more generally for doubling spaces supporting a weak-local 1-2 Poincar\'e inequality (see  \cite{BB-N} for details).

\begin{proposition}\label{prop-harmonic}
Let $\ms$ be a $\rcdkn$ space,  for some $K \in \R$ and $N\in [1,\infty)$, and let $\Omega\subset X$ be a bounded open set. Then the following properties hold.

\begin{itemize}
\item[i)] \emph{Regularity}. Let  $f:\Omega \to \R$ be harmonic in $\Omega$. Then $f$ admits a continuous representative (actually even locally Lipschitz).

\item [ii)] \emph{Comparison}. If $f,g \in {\rm D}({\bf \Delta},\Omega)$ are such that  $f \in W^{1,2}_{0}(\Omega),$  $|g|\leq C$ $\mm$-a.e. on $\Omega$  for some $C \in \R$ and ${\bf \Delta} (f+g) \geq 0$ then  $f\leq 2 C$ $\mm$-a.e. on $\Omega$.

\item [ii)] \emph{Existence and uniqueness of harmonic functions}. Assume that $\mm(X \setminus \Omega) >0$ and let $f\in W^{1,2}(X)$. Then there exists a unique harmonic function $g$ on $\Omega$ such that $f-g \in W^{1,2}_0(\Omega)$. 

\item [iv)] \emph{Strong maximum principle}. Let $f:\Omega \to \R$ be harmonic in $\Omega$ and assume that its continuous representative has a maximum at a point $x_0 \in \Omega$. Then $f$ is constant on the connected component of $\Omega$ containing $x_0$.


\end{itemize}
\end{proposition}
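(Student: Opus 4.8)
The four assertions are the standard pillars of elliptic potential theory on spaces carrying a doubling measure and a weak local Poincar\'e inequality, which is exactly the geometric regime of $\rcdkn$ with $N<\infty$ recalled above. The plan is to establish the existence–uniqueness statement (iii) and the comparison statement (ii) directly by energy methods, using that $\ms$ is infinitesimally Hilbertian, and to reduce the regularity statement (i) and the strong maximum principle (iv) to the De Giorgi–Nash–Moser and Harnack theory available in this setting.

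For (iii) I would use the direct method of the calculus of variations applied to the Dirichlet energy $h\mapsto \int_\Omega |\D h|^2\,\d\mm$ over the affine class $\mathcal A:=\{h\in W^{1,2}(X):h-f\in W^{1,2}_0(\Omega)\}$. Since $\Omega$ is bounded and $\mm(X\setminus\Omega)>0$, the Poincar\'e inequality yields a Friedrichs-type estimate $\|h\|_{L^2(\Omega)}\leq C\||\D h|\|_{L^2(\Omega)}$ for $h\in W^{1,2}_0(\Omega)$; hence a minimizing sequence is bounded in $W^{1,2}(X)$ and, $W^{1,2}(X)$ being a Hilbert space, admits a weakly convergent subsequence whose limit $g$ lies in the (weakly closed, convex) class $\mathcal A$ and minimizes the energy by lower semicontinuity. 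Uniqueness comes from strict convexity of the quadratic form, and the first variation along $\varphi\in W^{1,2}_0(\Omega)$ gives $\int\la\nabla g,\nabla\varphi\ra\,\d\mm=0$, i.e. ${\bf \Delta} g\llcorner\Omega=0$, so $g$ is harmonic.

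For (ii) I would set $u:=f+g$, so that ${\bf \Delta} u\llcorner\Omega\geq 0$. Since $|g|\leq C$ forces $C\geq 0$ and $g\leq C$ $\mm$-a.e., the truncation $(u-C)^+$ is dominated by $f^+$ and supported in $\{f>0\}$, hence lies in $W^{1,2}_0(\Omega)$; after approximating it in $W^{1,2}$ by nonnegative compactly supported Lipschitz functions, it is an admissible nonnegative test object. Testing subharmonicity against it gives $0\leq \int (u-C)^+\,\d{\bf \Delta} u = -\int\la\nabla(u-C)^+,\nabla u\ra\,\d\mm = -\int_{\{u>C\}}|\D u|^2\,\d\mm\leq 0$, so $|\D(u-C)^+|=0$ $\mm$-a.e.; the Friedrichs inequality then yields $(u-C)^+=0$, that is $u\leq C$ $\mm$-a.e. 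Therefore $f=u-g\leq C-g\leq 2C$ $\mm$-a.e., as claimed.

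Finally, for (i) the continuity (indeed local H\"older continuity) of a harmonic function follows from De Giorgi–Nash–Moser iteration, which needs only the doubling and Poincar\'e properties, whereas the sharper \emph{local Lipschitz} bound relies on the finer $\rcdkn$ gradient estimates and is the input I would import from the literature. Granting continuity, the strong maximum principle (iv) follows from the elliptic Harnack inequality (a consequence of doubling plus Poincar\'e): if $f$ attains its maximum $M$ at $x_0\in\Omega$, apply Harnack to the nonnegative harmonic function $M-f$ on a small ball $B\ni x_0$ with $\overline{B}\subset\Omega$; since $(M-f)(x_0)=0$ the infimum of $M-f$ over $B$ vanishes, so Harnack forces $M-f\equiv 0$ on $B$. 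Thus $\{f=M\}$ is open, and by continuity closed, hence equal to the connected component of $\Omega$ containing $x_0$. I expect the only genuinely delicate point to be the local Lipschitz regularity in (i); the remaining assertions are routine once the PI structure and infinitesimal Hilbertianity are in force.
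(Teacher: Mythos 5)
Your proposal is correct in outline, but note that the paper does not actually prove this proposition: it is stated as a collection of known facts, with the remark that the statements hold more generally for doubling spaces supporting a weak local $(1,2)$-Poincar\'e inequality, and the proof is deferred entirely to the potential-theory literature \cite{BB-N} (with \cite{G-O,GM-A} for the general framework). What you have written is essentially a reconstruction of the standard arguments contained in that reference: the direct method in the affine class $\{h:\,h-f\in W^{1,2}_0(\Omega)\}$ for existence/uniqueness (using infinitesimal Hilbertianity for the parallelogram identity and the Friedrichs inequality, valid since $\mm(X\setminus\Omega)>0$ gives positive capacity of the complement), truncation-and-testing for the comparison principle, De Giorgi--Nash--Moser for H\"older continuity, and the elliptic Harnack inequality for the strong maximum principle. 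Your route buys self-containedness; the paper's citation buys economy and, more importantly, sidesteps several technical points that your sketch glosses over and that are genuinely delicate in the metric setting: (a) the claim that $(u-C)^+$ lies in $W^{1,2}_0(\Omega)$ because it is squeezed between $0$ and $f^+$ with $f\in W^{1,2}_0(\Omega)$ is a nontrivial lemma of Newtonian-space theory requiring quasicontinuity of Sobolev representatives, not a triviality; (b) pairing the non-compactly-supported function $(u-C)^+$ against the merely locally finite measure ${\bf \Delta}u\llcorner\Omega$ requires an approximation argument (one should test with compactly supported nonnegative Lipschitz approximants and pass to the limit, using that $g\in {\rm D}({\bf \Delta},\Omega)$ only guarantees $|\D g|\in L^2_{loc}(\Omega)$, so the convergence of the right-hand side $-\int\la\nabla\varphi_n,\nabla u\ra\,\d\mm$ needs care near $\partial\Omega$); and (c) the local Lipschitz (rather than just H\"older) regularity in (i) is not a doubling-plus-Poincar\'e fact but uses the $\rcdkn$ structure, as you correctly acknowledge by importing it. None of these is a fatal gap --- all are resolved in \cite{BB-N} and the $\rcdkn$ literature --- but a complete write-up along your lines would have to address them explicitly, which is precisely why the authors chose to cite rather than prove.
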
 

In order to state the Laplacian Comparison Theorem, let us  introduce the coefficients $\tilde{\sigma}_{K,N}(\cdot):[0,\infty)\to\R$ defined by
 \[
\tilde{\sigma}_{K,N}(\theta):=\left\{
\begin{array}{ll}
\theta \sqrt{\frac{K}{N}} \, {\rm cotan} \left(\theta \sqrt{\frac{K}{N}} \right),&\qquad\textrm{ if }K>0,\\
1 &\qquad\textrm{ if }K=0 ,\\
\theta \sqrt{-\frac{K}{N}} \, {\rm cotanh} \left(\theta \sqrt{-\frac{K}{N}} \right),&\qquad\textrm{ if }K<0.
\end{array}
\right.
\]

\begin{theorem}[Laplacian comparison,  \cite{G-O}]\label{thm:LapComp}
Let $\ms$ be an $\rcdkn$ space for some $K\in \R$ and $N\in(1,\infty)$. Then
$$\frac{\d^2(x_{0}, \cdot)}{2}\in  {\rm D}({\bf \Delta}, X)  \quad \text{with} \quad {\bf \Delta} \frac{\d^2(x_0,\cdot)}{2} \leq N\, \tilde{\sigma}_{K,N}(\d (x_0, \cdot)) \,\mm \quad \forall x_0 \in X  $$ 
and
$$\d(x_0, \cdot) \in  {\rm D}({\bf \Delta}, X\setminus\{x_0\})  \quad \text{with} \quad {\bf \Delta} \d(x_0, \cdot) \llcorner {X\setminus \{x_0\}}\leq \frac{ N \, \tilde{\sigma}_{K,N}( \d(x_0, \cdot))-1}{\d(x_0, \cdot)}\, \mm \quad \forall x_0 \in X.   $$
\end{theorem}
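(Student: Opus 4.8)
\emph{Strategy.} I would prove the sharp bound for $u:=\tfrac12\d^2(x_0,\cdot)$ first and then read off the bound for $\d(x_0,\cdot)$ by the chain rule, the point being that on an $\rcdkn$ space the curvature-dimension condition is equivalent to the Bakry--\'Emery inequality $\bekn$ (see the references collected in Section~\ref{sec:prel}), and that $u$ is the function for which the dimensional term of $\bekn$ is \emph{sharp}. The two statements are linked by the chain rule for the measure-valued Laplacian: on $X\setminus\{x_0\}$ one has $\d(x_0,\cdot)=\sqrt{2u}$, so, using the eikonal identity $|\D u|^2=2u$ (which follows from $|\D\,\d(x_0,\cdot)|=1$ $\mm$-a.e.\ together with the Leibniz rule),
\[
{\bf \Delta}\,\d(x_0,\cdot)=\frac1{\d(x_0,\cdot)}\Big({\bf \Delta} u-\mm\Big)\qquad\text{on }X\setminus\{x_0\},
\]
and dividing the one-sided measure bound ${\bf \Delta} u\le N\,\tilde{\sigma}_{K,N}(\d(x_0,\cdot))\,\mm$ by the positive continuous function $\d(x_0,\cdot)$ yields exactly the claimed estimate for $\d(x_0,\cdot)$ away from the tip.

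\emph{The core estimate via Bochner and the eikonal equation.} Writing $w:=\Delta u$ and $r:=\d(x_0,\cdot)$, and using $|\D u|^2=2u=r^2$, the Bochner identity $\Gamma_2(u)=\tfrac12\Delta|\D u|^2-\la\nabla u,\nabla\Delta u\ra$ formally reduces, since $\tfrac12\Delta(r^2)=\Delta u=w$ and $\la\nabla u,\nabla w\ra=r\,\partial_r w$, to $\Gamma_2(u)=w-r\,\partial_r w$. On the other hand $\bekn$ gives $\Gamma_2(u)\ge \tfrac1N w^2+K|\D u|^2=\tfrac1N w^2+Kr^2$, so that $w$ satisfies the Riccati-type differential inequality $w-r\,\partial_r w\ge \tfrac1N w^2+Kr^2$. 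The crucial gain over applying $\bekn$ to $r$ directly is that for $u=\tfrac12 r^2$ the dimensional bound $|{\rm Hess}\,u|^2\ge (\Delta u)^2/N$ is attained at the $N$-dimensional model: there the radial eigenvalue of ${\rm Hess}\,u$ equals $1$ and each of the $N-1$ transversal eigenvalues also equals $1$, so all $N$ eigenvalues coincide and Cauchy--Schwarz over the $N$ directions is an equality. Integrating the differential inequality with the boundary value $w\to N=N\tilde{\sigma}_{K,N}(0)$ as $r\downarrow0$ (here the dimension upper bound $N$ enters to control the infinitesimal behaviour at $x_0$) and comparing with the model solution gives $w\le N\,\tilde{\sigma}_{K,N}(r)$, which is the first assertion; that $u$ lies in ${\rm D}({\bf \Delta},X)$ including across $x_0$ is part of the same argument and causes no trouble, since the candidate bound is locally bounded near $x_0$ and $\mm(\{x_0\})=0$.

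\emph{Making it rigorous, and the main obstacle.} The function $u$ is only semiconcave, so none of ${\rm Hess}\,u$, the radial derivative $\partial_r w$, nor even $w=\Delta u$ as a function are available a priori, and the whole computation has to be interpreted in the weak/measure sense. I would implement it inside Gigli's second-order $L^2$-calculus, where $u$ has a well-defined Hessian and the improved (measure-valued) Bochner inequality holds, or, more hands-on, by regularizing $u$ through the heat flow $\mathsf h_t u$, testing the integrated form of $\bekn$ against nonnegative $\varphi$ supported away from $x_0$, and passing to the limit; the eikonal identity, which holds for $u$ but only approximately for $\mathsf h_t u$, forces one to control the regularization errors uniformly. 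The genuinely delicate points, and where I expect the main difficulty, are: (i) upgrading the pointwise Riccati heuristic to a one-sided inequality between the \emph{measure} ${\bf \Delta} u$ and the absolutely continuous measure $N\tilde{\sigma}_{K,N}(r)\,\mm$, observing that only the upper bound is needed and that the singular part of ${\bf \Delta} u$ (concentrating along the cut locus) is automatically nonpositive and hence harmless; and (ii) justifying the boundary behaviour at the tip $x_0$ that pins the integration constant, which is where the dimension bound $N$ is really used. As an independent check one can instead run the optimal-transport proof: $\tfrac12\d^2(x_0,\cdot)$ is the Kantorovich potential for transport to $x_0$, and differentiating at $t=0$ the R\'enyi-entropy convexity supplied by the $\cdkn$ inequality along the Wasserstein geodesics collapsing a smooth density onto $x_0$ reproduces the same coefficients, the distortion coefficients degenerating precisely into $N\tilde{\sigma}_{K,N}$ as $t\downarrow0$.
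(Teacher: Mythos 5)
The paper itself contains no proof of Theorem \ref{thm:LapComp}: it is recalled verbatim from Gigli's memoir \cite{G-O}, and the proof there is precisely the optimal-transport argument that you relegate to a closing ``independent check''. In \cite{G-O} one shows that (a rescaling of) $\d^2(x_0,\cdot)/2$ is a Kantorovich potential, differentiates the entropy convexity supplied by the curvature-dimension condition along the Wasserstein geodesic contracting a smooth density onto $\delta_{x_0}$, and then invokes a representation lemma: if $-\int \la\nabla u,\nabla\varphi\ra\,\d\mm \le \int \varphi\, g\,\d\mm$ for all nonnegative compactly supported Lipschitz $\varphi$ and some $g\in L^1_{loc}$, then $u\in {\rm D}({\bf\Delta})$ and ${\bf\Delta}u\le g\,\mm$. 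In other words, the existence of the measure-valued Laplacian and the upper bound are produced \emph{simultaneously} by one argument; they are not separable steps. So your fallback sketch is essentially the proof of the cited source, while your main route is genuinely different from it.

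As for that main (Bochner) route: the formal computation is right --- $w=N\tilde\sigma_{K,N}(r)$ does solve $w-r\,\partial_r w = w^2/N + Kr^2$ with equality, and it is a correct observation that the $\sigma$-type (rather than sharp Bishop) coefficients appear exactly because the only dimensional input is the trace inequality. But as a proof it has a circularity that your phrase ``causes no trouble'' papers over: $\bekn$, in any of its rigorous formulations (pointwise for test functions, or measure-valued in Gigli's second-order calculus), can only be applied to functions already known to possess a sufficiently regular Laplacian, whereas the membership $\d^2(x_0,\cdot)/2\in{\rm D}({\bf\Delta},X)$ is itself part of the conclusion to be proved; this is exactly the point the representation lemma of \cite{G-O} is designed to handle, and your scheme has no substitute for it. A second genuine gap is the assertion that the singular part of ${\bf\Delta}u$ along the cut locus is ``automatically nonpositive'': that is a semiconcavity property of $\d^2/2$, available on Alexandrov spaces but not known a priori on $\rcdkn$ spaces --- there, nonpositivity of the singular part is a \emph{consequence} of the comparison theorem, not an admissible input. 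The heat-flow regularization would have to overcome both problems at once (the eikonal identity fails for $\mathsf h_t u$, and there is no uniform control of singular parts in the limit), and this would require substantial technical work not supplied here. The chain-rule passage from $\d^2(x_0,\cdot)/2$ to $\d(x_0,\cdot)$ on $X\setminus\{x_0\}$ is fine, but only once the statement for the squared distance has been fully established.
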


\subsection{Pointed measured Gromov-Hausdorff convergence  and convergence of functions}
In order to study the convergence of possibly non-compact metric measure spaces, it is useful to fix  reference points. We then say that $(X,\d,\mm,\bar{x})$ is a pointed metric measure space, p.m.m.s. for short, if $(X,\d,\mm)$ is a m.m.s. as before and $\bar{x}\in X$ plays the role of reference point. Recall that, for simplicity, we always assume $\supp \mm=X$. We will adopt the following definition of convergence of p.m.m.s.  (see \cite{BBI-A}, \cite{GMS-C}
 and  \cite{V-O}):
\begin{definition}[Pointed measured Gromov-Hausdorff convergence]\label{def:conv}
A sequence $(X_j,\d_j,\mm_j,\bar{x}_j)$ is said to converge 
in the  pointed measured Gromov-Hausdorff topology (p-mGH for short) to 
$(X_\infty,\d_\infty,\mm_\infty,\bar{x}_\infty)$ if there 
exists a separable metric space $(Z,\d_Z)$ and isometric embeddings  
$\{\iota_j:(X_{j},\d_j)\to (Z,\d_Z)\}_{i \in \bar{\N}}$ such that
for every 
$\varepsilon>0$ and $R>0$ there exists $j_0$ such that for every $j>j_0$
\[
\iota_\infty(B^{X_\infty}_R(\bar{x}_\infty)) \subset B^Z_{\varepsilon}[\iota_j(B^{X_j}_{R+\varepsilon} (\bar{x}_j))]  \qquad \text{and} \qquad  \iota_j(B^{X_j}_R(\bar{x}_j)) \subset B^Z_{\varepsilon}[\iota_\infty(B^{X_\infty}_{R+\varepsilon} (\bar{x}_\infty))], 
\]
where $B^Z_\varepsilon[A]:=\{z \in Z: \, \d_Z(z,A)<\varepsilon\}$ for every subset $A \subset Z$, and 
\[
\lim_{j \to \infty} \int_Z \varphi \, \d ((\iota_j)_\sharp(\mm_j)) = \int_Z \varphi \, \d  ((\iota_\infty)_\sharp(\mm_\infty)) \qquad \forall \varphi \in C_b(Z), 
\]
where $C_b(Z)$ denotes the set of real valued bounded continuous functions with bounded support in $Z$.
\end{definition}
Sometimes in the following, for simplicity of notation, we will identify the spaces $X_j$ with 
their isomorphic copies $\iota_j(X_j)\subset Z$. It is obvious that this is in fact a notion of convergence for isomorphism classes of p.m.m.s., moreover it is induced by a metric   (see e.g. \cite{GMS-C} for details).
\\Next, following \cite{GMS-C}, we recall various notions of convergence of functions defined on p-mGH converging spaces.

\begin{definition}[Pointwise convergence of scalar valued functions]\label{def:pointConv}
Let $(X_j, \d_j, \mm_j, \bar{x}_j)$, $j\in \mathbb{N} \cup \{\infty\}$ be a p-mGH converging sequence of  p.m.m.s.  and let $f_j: X_j \mapsto \R, j\in \mathbb{N} \cup \{\infty\}$ be a sequence of functions. We say that $f_j $ converge pointwise to $ f_\infty$  provided:
\[
f_j(x_j) \to f_\infty(x_\infty)~~\text{ for every sequence of points} ~x_j \in X_j ~\text{such that}~ \iota_j(x_j) \to \iota_\infty(x_\infty) \text{ in } (Z,\d_{Z}).
\]
If for any $\epsilon>0$ there exists $N\in \mathbb N$ such that  $| f_j(x_j) - f_\infty(x_\infty)|\leq \epsilon$ for every $j \geq N$ and every $x_j \in X_j, x_\infty \in X_\infty$ with $\d_Z(\iota_j(x_j),\iota_\infty(x_\infty)) \leq \frac{1}{N}$, we say that $f_j \to f_\infty$ uniformly.
\end{definition}

\begin{definition}[$L^2$ weak and strong  convergence]\label{def:L2conv}
Let $(X_j, \d_j, \mm_j, \bar{x}_j)$, $j\in \mathbb{N} \cup \{\infty\}$ be a p-mGH converging sequence of pointed metric measure spaces and let $f_j\in L^2(X_j, \mm_j),  j\in \mathbb{N} \cup \{\infty\}$ be a sequence of functions.  
\begin{itemize}
\item  We say that $(f_j) $ converges \emph{weakly in $L^2$}  to $ f_\infty$  provided  $(\iota_j)_\sharp (f_j\,\mm_j ) \weakto (\iota_\infty)_\sharp (f_\infty\,\mm)$  weakly as Radon measures, i.e.
$$
\int_{X_{j}} f_{j}(x)\;  \varphi(\iota_{j}(x)) \; \d \mm_{j}(x) \to   \int_{X_{\infty}} f_{\infty}(x)\; \varphi(\iota_{\infty}(x)) \; \d \mm_{\infty}(x),  \qquad \forall \varphi \in C_b(Z), 
$$ 
and
$$\sup_{j \in \N} \int_{X_{j}} |f_{j}|^{2} \, \d\mm_{j}<\infty.$$

\item  We say that $(f_j) $ converges \emph{strongly in $L^2$}  to $ f_\infty$ provided it converges weakly in $L^{2}$ to $ f_\infty$ and moreover
$$
\lim_{j \to \infty} \int_{X_{j}} |f_{j}|^{2} \, \d\mm_{j}= \int_{X_{\infty}} |f_{\infty}|^{2} \, \d\mm_{\infty}.
$$
\end{itemize}
\end{definition}

\begin{definition}[$W^{1,2}$ weak and strong  convergence]
Let $(X_j, \d_j, \mm_j, \bar{x}_j)$, $j\in \mathbb{N} \cup \{\infty\}$ be a p-mGH converging sequence of pointed metric measure spaces and let $f_j\in W^{1,2}(X_j, \d_{j}, \mm_j),  j\in \mathbb{N} \cup \{\infty\}$ be a sequence of functions.  
We say that $(f_{j})$ converges \emph{weakly in $W^{1,2}$} to $f_{\infty}$ if $f_{j}$ are $L^{2}$-weakly convergent to $f$ and
 $$\sup_{j\in \N} \int_{X_{j}} |\D f_{j}|^{2} \, \d \mm_{j} < \infty. $$
 \emph{Strong convergence in $W^{1,2}$} is defined by requiring $L^{2}$-strong convergence 
of the functions and that 
$$
\lim_{j \to \infty} \int_{X_{j}}  |\D f_{j}|^{2} \, \d \mm_{j} =   \int_{X_{\infty}}  |\D f_{\infty}|^{2} \, \d \mm_{\infty}.
$$ 
\end{definition}

The next result proved in \cite[Corollary 5.5]{AH-N} (see also \cite[Corollary 6.10]{GMS-C}) will be useful in the sequel.

\begin{proposition} \label{prop-1order-converge}
Let $(X_j, \d_j, \mm_j, \bar{x}_j)$, $j\in \mathbb{N} \cup \{\infty\}$ be a p-mGH converging sequence of  pointed metric measure spaces.
 If for every $j \in \N$ one has $f_j \in W^{1,2}(X_i)$, $f_j \in {\rm D}({\bf \Delta}_j, X_{j})$ with $\Delta_{j} f_j$  uniformly bounded in  $L^2$, and $(f_{j})$ converges strongly in $L^{2}$ to $f_{\infty}$, then $f_{\infty}\in {\rm D}({\bf\Delta}_{\infty}, X_{\infty})$ and $(f_j)$ converges  to
$f_{\infty}$
 strongly in $W^{1,2}$.
 \end{proposition}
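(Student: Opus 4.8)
The plan is to use the uniform $L^{2}$ bound on the Laplacians to first gain a uniform $W^{1,2}$ bound, then to pass to the limit in the defining relation of the Laplacian, and finally to upgrade weak convergence to strong convergence by means of an energy identity. Throughout one works in the infinitesimally Hilbertian ($\rcdkn$) setting, so that the linear Laplacian $\Delta_{j}$ and the pointwise scalar product $\la\nabla\cdot,\nabla\cdot\ra$ are available on each $X_{j}$.

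First I would record the integration-by-parts identity. Since $f_{j}\in W^{1,2}(X_{j})$ and $\Delta_{j}f_{j}\in L^{2}$, testing the definition of $\Delta_{j}f_{j}$ against $f_{j}$ itself (legitimate by density of compactly supported Lipschitz functions in $W^{1,2}$, as all relevant quantities lie in $L^{2}$) gives
\[
\int_{X_{j}}|\D f_{j}|^{2}\,\d\mm_{j}=-\int_{X_{j}} f_{j}\,\Delta_{j}f_{j}\,\d\mm_{j}\leq \|f_{j}\|_{L^{2}(\mm_{j})}\,\|\Delta_{j}f_{j}\|_{L^{2}(\mm_{j})}.
\]
As $f_{j}\to f_{\infty}$ strongly in $L^{2}$, the norms $\|f_{j}\|_{L^{2}}$ are bounded, and $\|\Delta_{j}f_{j}\|_{L^{2}}$ is bounded by hypothesis; hence $\sup_{j}\int|\D f_{j}|^{2}\,\d\mm_{j}<\infty$. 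Thus $(f_{j})$ is bounded in $W^{1,2}$, so by the weak $W^{1,2}$-compactness under p-mGH convergence of \cite{GMS-C} the whole sequence converges weakly in $W^{1,2}$ to its strong $L^{2}$-limit $f_{\infty}$, which therefore lies in $W^{1,2}(X_{\infty})$. Likewise, by the uniform $L^{2}$-bound, up to a subsequence $\Delta_{j}f_{j}\weakto h$ weakly in $L^{2}$ for some $h\in L^{2}(X_{\infty})$.

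Next I would identify $h$ with the Laplacian of $f_{\infty}$. Fix $\varphi_{\infty}\in W^{1,2}(X_{\infty})$ and choose a recovery sequence $\varphi_{j}\to\varphi_{\infty}$ converging strongly in $W^{1,2}$, which exists by the Mosco-type convergence of the Cheeger energies \cite{GMS-C}. Passing to the limit in the identity $\int_{X_{j}}\la\nabla\varphi_{j},\nabla f_{j}\ra\,\d\mm_{j}=-\int_{X_{j}}\varphi_{j}\,\Delta_{j}f_{j}\,\d\mm_{j}$, the left-hand side tends to $\int_{X_{\infty}}\la\nabla\varphi_{\infty},\nabla f_{\infty}\ra\,\d\mm_{\infty}$ by stability of the scalar product under strong--weak $W^{1,2}$ convergence \cite{GMS-C, AH-N}, while the right-hand side tends to $-\int_{X_{\infty}}\varphi_{\infty}\,h\,\d\mm_{\infty}$ as the product of an $L^{2}$-strongly and an $L^{2}$-weakly convergent sequence. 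This shows $f_{\infty}\in{\rm D}({\bf\Delta}_{\infty},X_{\infty})$ with $\Delta_{\infty}f_{\infty}=h$; since $h$ is thereby uniquely determined by $f_{\infty}$, no subsequence extraction is ultimately needed.

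Finally I would upgrade to strong $W^{1,2}$ convergence by showing convergence of the Cheeger energies. Using the energy identity of the first step together with the fact that $f_{j}\to f_{\infty}$ strongly and $\Delta_{j}f_{j}\weakto\Delta_{\infty}f_{\infty}$ weakly in $L^{2}$, the strong--weak product converges and
\[
\lim_{j}\int_{X_{j}}|\D f_{j}|^{2}\,\d\mm_{j}=-\lim_{j}\int_{X_{j}}f_{j}\,\Delta_{j}f_{j}\,\d\mm_{j}=-\int_{X_{\infty}}f_{\infty}\,\Delta_{\infty}f_{\infty}\,\d\mm_{\infty}=\int_{X_{\infty}}|\D f_{\infty}|^{2}\,\d\mm_{\infty},
\]
the last equality being integration by parts on $X_{\infty}$. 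Together with the strong $L^{2}$-convergence, convergence of the energies is exactly strong $W^{1,2}$-convergence, as required. The main obstacle is the passage to the limit in $\int\la\nabla\varphi_{j},\nabla f_{j}\ra$ in Step four: this stability of the scalar product is precisely the delicate Mosco-type convergence for Cheeger energies on p-mGH converging spaces established in \cite{GMS-C, AH-N}, which is why those references are the essential analytic input.
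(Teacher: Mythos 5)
Your proposal is correct, but note that the paper itself offers no proof of this proposition: it is quoted verbatim as a known result, namely \cite[Corollary 5.5]{AH-N} (see also \cite[Corollary 6.10]{GMS-C}). What you have written is essentially a faithful reconstruction of the argument behind that citation: (i) integration by parts plus the uniform $L^{2}$ bound on $\Delta_{j}f_{j}$ gives a uniform Cheeger-energy bound, hence weak $W^{1,2}$ convergence to $f_{\infty}$ and, up to a subsequence, $L^{2}$-weak convergence $\Delta_{j}f_{j}\weakto h$; (ii) testing against recovery sequences and using the strong--weak continuity of the scalar products identifies $h=\Delta_{\infty}f_{\infty}$ (so no subsequence is needed); (iii) the strong--weak product $\int f_{j}\,\Delta_{j}f_{j}\,\d\mm_{j}$ converges, which by integration by parts on both ends yields convergence of the energies and hence strong $W^{1,2}$ convergence. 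You also correctly identify where the real analytic weight lies, namely the Mosco convergence of the Cheeger energies and the strong--weak stability of $\int\la\nabla\cdot,\nabla\cdot\ra$ under p-mGH convergence, which is precisely the content of \cite{GMS-C, AH-N}; a self-contained proof of those facts is far from trivial, so your argument is a correct reduction to them rather than a full proof from scratch. Two minor points you handled appropriately but should keep explicit: the statement implicitly requires the spaces to be (uniformly) $\rcdkn$, or at least infinitesimally Hilbertian with Mosco-converging energies, since otherwise neither the bilinear object $\la\nabla f,\nabla g\ra$ nor the compactness/stability machinery is available (in the paper this proposition is only ever applied to rescalings of a fixed $\rcdkn$ space, where this holds); and testing the measure-valued Laplacian against $f_{j}$ itself, or against the $W^{1,2}$ recovery functions $\varphi_{j}$, requires the density of compactly supported Lipschitz functions in $W^{1,2}$, which is legitimate on proper $\rcdkn$ spaces but is an extension of the paper's definition of ${\rm D}({\bf\Delta})$, whose test class consists only of compactly supported Lipschitz functions.
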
 

\subsection{Euclidean tangent cones to $\rcdkn$ spaces}
Let us first recall the notion of measured tangents. Let  $\ms$ be a m.m.s.,  $\bar x\in X$ and $r\in(0,1)$; we consider the rescaled and normalized p.m.m.s. $(X,r^{-1}\d,\mm^{\bar{x}}_r,\bar x)$ where the measure $\mm^{\bar x}_r$ is given by
\begin{equation}
\label{eq:normalization}
\mm^{\bar x}_r:=\left(\int_{B_r(\bar x)}1-\frac 1r\d(\cdot,\bar x)\,\d\mm\right)^{-1}\mm.
\end{equation}
Then we define:
\begin{definition}[Tangent cone and regularity]
Let  $(X,\d,\mm)$ be a m.m.s. and  $\bar x\in X$. A p.m.m.s.  $(Y,\d_Y,\nn,y)$ is called a
\emph{tangent} to $(X,\d,\mm)$ at $\bar{x} \in X$ if there exists a sequence of rescalings $r_j \downarrow 0$ so that
$(X,r_j^{-1}\d,\mm^{\bar{x}}_{r_j},\bar{x}) \to (Y,\d_Y,\nn,y)$ as 
$j \to \infty$ in the p-mGH sense.
We denote the collection of all the tangents of $(X,\d,\mm)$ at 
$\bar{x} \in X$ by ${\rm Tan}(X,\d,\mm,\bar{x})$.  A point $\bar{x} \in X$ is called \emph{regular} if the tangent is unique and euclidean, i.e. if  ${\rm Tan}(X,\d,\mm,\bar{x})=\{(\R^{n}, \d_E, {\mathcal L}_{n}, 0^{n})\}$, where $\d_{E}$ is the Euclidean distance and ${\mathcal L}_{n}$ is the properly rescaled Lebesgue measure of $\R^{n}$.
\end{definition}

 The a.e.  regularity was settled for Ricci-limit spaces by Cheeger-Colding  \cite{CC-I, CC-II, CC-III}; for an  $\rcdkn$-space $(X,\d,\mm)$,  it was proved  in \cite{GMR-E} that for $\mm$-a.e. $x\in X$ there exists a blow-up sequence converging to a Euclidean space. The $\mm$-a.e.  uniqueness of the blow-up limit, together with the rectifiability of an $\rcdkn$-space, was then established in \cite{MN-S}.  More precisely the following  holds:
 
\begin{theorem}[$\mm$-a.e. infinitesimal regularity of $\rcdkn$-spaces]  \label{RCD-reg}
Let $(X,\d,\mm)$ be an $\rcdkn$-space for some $K\in \R, N \in (1,\infty)$. Then $\mm$-a.e. $x\in X$ is a regular  point, i.e. for $\mm$-a.e.  $x \in X$ there exists $n=n(x)\in [1,N]\cap \N$ such that, for any sequence $r_{j}\downarrow 0$,  the rescaled  pointed metric measure spaces $(X, r_{j}^{-1} \d, \mm^{x}_{r_{j}}, x)$ converge in the p-mGH sense to the pointed Euclidean space $(\R^{n}, \d_E, {\mathcal L}_{n},  0^{n})$. 
\end{theorem}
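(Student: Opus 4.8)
The plan is to deduce the statement by combining two deep structural results: the existence of at least one Euclidean tangent at $\mm$-a.e.\ point, established in \cite{GMR-E}, and the $\mm$-a.e.\ uniqueness of the tangent together with the rectifiability of the space, established in \cite{MN-S}. I will sketch the architecture of both and isolate where the real work lies. As a preliminary, I would record the basic compactness: since an $\rcdkn$ space is locally doubling, for every $\bar x$ the rescaled and normalized spaces $(X,r^{-1}\d,\mm^{\bar x}_r,\bar x)$ from \eqref{eq:normalization} form a precompact family in the p-mGH topology as $r\downarrow 0$, so ${\rm Tan}(X,\d,\mm,\bar x)\neq\emptyset$. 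Moreover the $\rcdkn$ condition is stable under p-mGH convergence and the rescaling $\d\mapsto r^{-1}\d$ sends it to ${\rm RCD}^*(r^2K,N)$; letting $r\downarrow 0$, every element of ${\rm Tan}(X,\d,\mm,\bar x)$ is an ${\rm RCD}^*(0,N)={\rm RCD}(0,N)$ space, and the normalization guarantees the limit measures are nontrivial.

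For the existence of a Euclidean tangent $\mm$-a.e.\ I would follow \cite{GMR-E}. The key points are: (i) tangents of tangents are again tangents of $(X,\d,\mm)$, via a diagonal argument on nested rescalings; (ii) by a differentiation/maximal-function argument applied to the Bishop--Gromov volume ratio, at $\mm$-a.e.\ $\bar x$ one reaches a tangent along which the ratio is infinitesimally constant, so that by the rigidity in the volume-cone-implies-metric-cone principle this tangent is a metric measure cone; (iii) an ${\rm RCD}^*(0,N)$ cone with $N>1$ that is not a point splits off a line through its tip, and the splitting theorem produces a factor $\R$ with cross-section an ${\rm RCD}^*(0,N-1)$ cone; (iv) iterating and using that each splitting strictly decreases the dimension bound, the procedure terminates after finitely many steps at a factor $\R^n$ with $n\in[1,N]\cap\N$. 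Passing back through (i) upgrades this Euclidean tangent of a tangent to a Euclidean tangent of $(X,\d,\mm)$ itself.

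The third and decisive step is to upgrade ``some blow-up sequence converges to $\R^n$'' to ``every blow-up sequence converges to the same $\R^n$'', following \cite{MN-S}. The mechanism is rectifiability via $\delta$-splitting maps: one approximates the coordinate functions of the Euclidean tangent by harmonic functions on balls $B_r(\bar x)$ (in the spirit of the harmonic approximation of distance functions, cf.\ Proposition \ref{lemma-harmonicapprox}), obtaining maps $u=(u_1,\dots,u_n):B_r(\bar x)\to\R^n$ whose gradients are nearly orthonormal in an integral sense. A maximal-function and Lipschitz-truncation argument then shows that on a set of almost full measure $u$ is biLipschitz onto its image; these charts realize $X$ as countably $n$-rectifiable, force the local dimension $n(\bar x)$ to be $\mm$-a.e.\ constant on each rectifiable piece, and — crucially — pin down the tangent uniquely as $\R^{n(\bar x)}$ for every rescaling sequence $r_j\downarrow 0$.

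The main obstacle is precisely this passage from existence to uniqueness in the third step: establishing the integral estimates on the gradients of the harmonic (splitting) maps and running the quantitative biLipschitz argument on a set of almost full measure, which is the technical heart of \cite{MN-S}. The finiteness of the dimension, $n\le N$, is guaranteed throughout by the hypothesis $N<\infty$, which drives the dimension reduction in the splitting step (ii)--(iv); without it the iteration need not terminate.
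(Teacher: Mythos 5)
Your proposal is correct and follows the same route as the paper: the paper does not actually prove Theorem \ref{RCD-reg} but recalls it as a known result, attributing the existence of a Euclidean blow-up limit at $\mm$-a.e.\ point to \cite{GMR-E} and the $\mm$-a.e.\ uniqueness of the tangent together with rectifiability to \cite{MN-S} --- precisely your two-step decomposition. (One minor inaccuracy: in \cite{GMR-E} the Euclidean tangent is obtained by producing lines in the blow-up from the $\mm$-a.e.\ extendability of geodesics and then iterating the splitting theorem via tangents-of-tangents, rather than through a volume-cone rigidity step, but this does not change the architecture.)
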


\section{Definition of angle}

\subsection{Angle between three points}

In \cite{M-A}, the second author proposed a notion of angle between three points $p,x,q \in X$ in a metric space $(X,\d)$. In general such an angle  is non unique, the possible causes of non-uniqueness being a lack of regularity of the distance function (e.g. $x$ is in the cut locus of $p$ or $q$) or a lack of  infinitesimal  strict convexity  of the distance function (for more details we refer to \cite[Sections 1,2]{M-A}). For simplicity, here we only treat the case when the angle is unique.
Given two points  $p,q \in X$, consider the distance functions 
\begin{equation} \label{eq:rprq}
r_p(\cdot):=\d(p,\cdot),\quad r_q(\cdot):=\d(q,\cdot).
\end{equation}

\begin{definition}\label{def:anglepxq}
We say that the angle $\angle pxq$ exists if and only if the limit for $\varepsilon\to 0$ of the quantity  $\frac{|\llip(r_{p}+\varepsilon r_{q})|^{2}(x)- |\llip(r_{p})|^{2}(x)}{2 \varepsilon}$ exists. In this case we set
\begin{equation} \label{eq:rprq}
[0,\pi]\ni  \angle pxq:= \arccos \left( \lim_{\varepsilon\to 0} \frac{|\llip(r_{p}+\varepsilon r_{q})|^{2}(x)- |\llip(r_{p})|^{2}(x)}{2 \varepsilon}     \right).
\end{equation}
\end{definition}
Note that if $(X,\d)$ is a smooth Riemannian manifold and $x$ is not in the cut locus of $p$ and $q$, then  $\angle pxq$ is the angle based at $x$ between $\nabla r_{p}(x)$ and $\nabla r_{q}(x)$; in other words $\angle pxq$ is the angle based at $x$ ``in direction of $p$ and $q$''. 
As already mentioned, for a general triple $pxq$  in a general metric space $(X,\d)$ the angle $\angle pxq$ may not exist; moreover, even if both  $\angle pxq$ and  $\angle qxp$ exist they may not be equal in general. On the other hand, such a definition satisfies some natural properties one expects from the geometric picture: the angle is invariant under  a constant rescaling of the metric $\d$, moreover for any two points $x,p\in X$ the angle  $\angle pxp$ always exists and,  if $(X,\d)$ is a length space,   is equal to $0$.

We now discuss an important class  of metric measure spaces $\ms$ where the angle exists and is symmetric in an a.e. sense, the so called Lipschitz-infinitesimally Hilbertian spaces.

\begin{definition}
A metric measure space $\ms$ is said to be  Lipschitz-infinitesimally Hilbertian if for any pair of Lipschitz functions $f,g\in \mathrm{LIP}(X)$ both  the limits for $\varepsilon \to 0$  of   $\frac{(|\llip(f+\varepsilon g)|^{2}(x)- |\llip(f)|^{2}(x)}{2 \varepsilon}$ and    $\frac{|\llip(g+\varepsilon f)|^{2}(x)- |\llip(g)|^{2}(x)}{2 \varepsilon}$ exist and are equal for $\mm$-a.e. $x\in X$, i.e.
\begin{equation}\label{eq:defLipInfHilb}
\lim_{\varepsilon\to 0}\frac{|\llip(f+\varepsilon g)|^{2}(x)- |\llip(f)|^{2}(x)}{2 \varepsilon}= \lim_{\varepsilon \to 0}\frac{|\llip(g+\varepsilon f)|^{2}(x)- |\llip(g)|^{2}(x)}{2 \varepsilon}, \quad \mm \text{-a.e. } x.
\end{equation}
 \end{definition}
 
 It is clear that if $\ms$ is  Lipschitz-infinitesimally Hilbertian then, given $p,q\in X$, for $\mm$-a.e. $x \in X$ both the angles  $ \angle pxq,   \angle qxp$ exist and $ \angle pxq= \angle qxp$.

 \begin{remark}\label{rem:LIH}
 The concept of Lipschitz-infinitesimally Hilbertian space was proposed in \cite{M-A} as a variant of the notion of infinitesimally Hilbertian space introduced in  \cite{AGS-M, G-O}, using the  language  of minimal weak upper gradients;  let us mention that  Lipschitz-infinitesimally Hilbertian always implies infinitesimally Hilbertian, but the converse is not clear in general. An important class of spaces where also  the converse implication holds is the one of locally doubling spaces satisfying a weak Poincar\'e inequality.  Indeed, by a celebrated result of Cheeger \cite{C-D},  we have that for every $f \in  \mathrm{LIP}(X)$ it holds $\lip f = |\D f|$ $\mm$-a.e., in other words the local  Lipschitz constant is equal to the minimal weak upper gradient $\mm$-a.e.  In particular for $\CD^{*}(K,N)$ spaces, $K\in \R, N \in [1, \infty)$ the two notions are equivalent. 
 For more details we refer to \cite[Remark 3.3]{M-A}.
 \\It follows that $\rcdkn$-spaces are  Lipschitz-infinitesimally Hilbertian, for $N<\infty$; let us recall that the class of  $\rcdkn$-spaces include finite dimensional Alexandrov spaces with curvature bounded below  and Ricci limit spaces as remarkable sub-classes.
 \end{remark}

\subsection{Angle between two geodesics}\label{sec:AngleGeod}

First of all observe that if $(X,\d)$ is a metric space and $\gamma\in \geo(X)$ is a geodesic,  then  $|\dot{\gamma}_{t}|=\d(\gamma_{0}, \gamma_{1})$ for a.e. $t \in [0,1]$; we will denote such a constant simply by $|\dot{\gamma}|$.  The next definition is inspired by the De Giorgi's metric
concept of gradient flow \cite{DeG}.
\begin{definition}[A geodesic representing the gradient of a Lipschitz function]\label{def:gradfgamma}
Let $f \in \mathrm{LIP} (X)$ be a Lipschitz function on $(X,\d)$. We say that $\gamma\in \geo(X)$ represents $\nabla f$ at time $0$, or $\gamma \in \geo(X)$ represents the gradient of  $f$ at the point $x=\gamma_{0}$ if the following inequality holds
\begin{equation}\label{eq:defgradfgamma}
\lmti{t}{0} \frac {f(\gamma_{t})-f(\gamma_{0})}{t} \geq  \frac12 \lip{ f}^{2} (\gamma_{0})  +\frac12 |\dot{\gamma}|^2.
\end{equation}
\end{definition}
Notice that the opposite inequality is always true, indeed 
\begin{align*}
\lmts{t}{0} \frac {f(\gamma_{t})-f(\gamma_{0})}{t} \leq   \lip{ f}(\gamma_{0})  \;  |\dot{\gamma}| \leq   \frac12 \lip{ f}^{2} (\gamma_{0}) +\frac12 |\dot{\gamma}|^2.
\end{align*}
 Hence $\gamma \in \geo(X)$ represents $\nabla f$ at time $0$ if and only if the equality holds. Note that, in the case of Riemannian manifolds, $\gamma$ represents $\nabla f$ at time $0$ if and only if $\dot{\gamma}_0=\nabla f$. 
\\ It is easy to check that the geodesic $\gamma \in \geo(X)$ represents the gradient of $f \in  \mathrm{LIP} (X)$ at $x \in X$ if and only if for every $\alpha\in (0,1)$ the rescaled geodesic $\tilde{\gamma}\in \geo(X)$ defined by $\tilde{\gamma}_{t}:=\gamma_{\alpha t}$, $\forall t \in [0,1]$, represents the gradient of the Lipschitz function $\alpha f$ at $x$. 
 In the next lemma we give a simple but  important example of a geodesic representing the gradient of a function.
 
 \begin{lemma}\label{lem:fgamma}
 Let $(X,\d)$ be a metric space, fix $p \in X$ and let $r_{p}(\cdot):=\d(p,\cdot)$. If for some $x \in X$ there exists a geodesic $\gamma^{xp}\in \geo(X)$ such that $\gamma_{0}=x$ and $\gamma_{1}=p$ then $\gamma^{xp}$ represents the gradient of $f(\cdot):=- \d(p,x)\,  r_{p}(\cdot) $ at $x$.
 \end{lemma}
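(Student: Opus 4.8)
The plan is to verify the defining inequality \eqref{eq:defgradfgamma} directly for $\gamma:=\gamma^{xp}$ and $f=-\d(p,x)\,r_{p}$, by computing both sides explicitly. Write $L:=\d(p,x)$, so that $|\dot{\gamma}|=\d(\gamma_{0},\gamma_{1})=L$ and $f=-L\,r_{p}$. The conceptual content is simply that along a geodesic heading towards $p$ the distance to $p$ decreases at the maximal (unit) rate, which pins down the left-hand limit, while the triangle inequality controls the local Lipschitz constant appearing on the right.

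First I would compute the left-hand side. Using the geodesic identity $\d(\gamma_{s},\gamma_{t})=|s-t|\,\d(\gamma_{0},\gamma_{1})$ together with $\gamma_{1}=p$, I get $r_{p}(\gamma_{t})=\d(p,\gamma_{t})=(1-t)L$ for every $t\in[0,1]$, hence
\[
f(\gamma_{t})=-L\,r_{p}(\gamma_{t})=-(1-t)L^{2},\qquad f(\gamma_{0})=-L^{2}.
\]
Therefore $\frac{f(\gamma_{t})-f(\gamma_{0})}{t}=L^{2}$ for every $t\in(0,1]$, so the limit exists and $\lmti{t}{0}\frac{f(\gamma_{t})-f(\gamma_{0})}{t}=L^{2}$.

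Next I would bound the right-hand side. Since $r_{p}$ is $1$-Lipschitz by the triangle inequality, $\lip{r_{p}}(x)\le\Lip(r_{p})\le1$; and as $f$ differs from $r_{p}$ only by the constant factor $-L$, this gives $\lip{f}(x)=L\,\lip{r_{p}}(x)\le L$. Hence
\[
\tfrac12\lip{f}^{2}(\gamma_{0})+\tfrac12|\dot{\gamma}|^{2}\le\tfrac12L^{2}+\tfrac12L^{2}=L^{2}.
\]
Comparing with the previous display yields exactly \eqref{eq:defgradfgamma}, and the lemma follows.

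I do not anticipate a genuine obstacle: the argument is a short explicit computation whose only non-definitional ingredient is the $1$-Lipschitzness of the distance function. It is worth noting, however, that combining the inequality just proved with the reverse inequality recorded right after Definition \ref{def:gradfgamma} forces equality throughout \eqref{eq:defgradfgamma}; in particular it shows $\lip{r_{p}}(x)=1$ at every $x$ joined to $p$ by a geodesic, a harmless byproduct that one could isolate should it be needed elsewhere.
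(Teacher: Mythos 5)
Your proof is correct and takes essentially the same approach as the paper: both compute the difference quotient exactly as $\d(p,x)^{2}$ via the geodesic identity and control the local Lipschitz constant by the triangle inequality. The only (harmless) difference is that the paper checks $\llip(r_{p})(x)=1$ outright, whereas you note that the one-sided bound $\llip(f)(x)\le \d(p,x)$ already suffices for the defining inequality \eqref{eq:defgradfgamma}.
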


\begin{proof}
For every $t\in (0,1)$ it holds
\begin{align*}
\frac {f(\gamma^{xp}_{t})-f(\gamma^{xp}_{0})}{t}&= \d(p,x)\; \frac{\d(p,x)-\d(p, \gamma^{xp}_{t})}{t} =  \d(p,x) \; \frac{\d(x, \gamma^{xp}_{t})}{t}=  \d(p,x) \; \frac{t  \d(x,p)}{t}\\
&=  \d(p,x)^{2}.
\end{align*}
On the other hand, by triangle inequality it is clear that $\llip( r_{p})\leq 1$ and with  an analogous argument as above it is  easily checked that actually $\llip( r_{p}) (x)=1$.  Therefore  $\llip(f)(x)= \d(p,x)=: |\dot{\gamma}^{xp}|$  and the claim follows.
\end{proof} 

We can now define the angle between two geodesics.

\begin{definition}[Angle between two geodesics]\label{def:angle-1}
Let $(X,\d)$ be a metric space and let $\gamma, \eta\in \geo(X)$ be two geodesics with $\gamma_{0}=\eta_{0}=p$. Let $f \in LIP(X)$ be a Lipschitz function such that $\gamma$ represents the gradient of $f$ at time $0$.  We say that the angle  $\angle \eta p \gamma$ exists if and only if  the limit  as $t\downarrow 0$ of  $\frac{f(\eta_t)-f(\eta_0)}{t}$ exists. In this case we set
 \begin{equation}\label{eq:defanglegeod}
[0,\pi] \ni \angle \eta p \gamma:=\arccos \left(  \frac{ 1} { |\dot{\eta}||\dot{\gamma}|}  \mathop{\lim}_{t\downarrow 0} \frac{f (\eta_t)-f(\eta_0)}{t}  \right) .
  \end{equation}
\end{definition}

\begin{remark}[Locality of the angle between two geodesics]\label{rem:restrAngleGeo}
It is easily seen that the angle between the two geodesics $\gamma, \eta\in \geo(X)$ at the point $p=\gamma_{0}=\eta_{0}$ depend just on the germs of the curves at $p$. To see that, fix arbitrary $T_{\gamma}, T_{\eta}\in (0,1)$ and call  $\tilde{\gamma}, \tilde{\eta}$ the restrictions of $\gamma, \eta$ to $[0,T_{\gamma}], [0,T_{\eta}]$ properly rescaled, i.e:
$$\tilde{\gamma}(t):=\gamma(T_{\gamma} t), \quad  \tilde{\eta}(t):=\eta(T_{\eta} t), \quad  \forall t \in [0,1].$$
Of course we still have $\tilde{\gamma}, \tilde{\eta}\in \geo(X)$, and it is readily seen that $\tilde{\gamma}$ represents the gradient of $\tilde{f}:=T_{\gamma} f$. It follows that  $\angle \eta p \gamma$ exists if and only if $\angle \tilde{\eta} p \tilde{\gamma}$ exists, and in this case it holds
\begin{align*}
 \angle \eta p \gamma&:=\arccos \left(  \frac{ 1} { |\dot{\eta}||\dot{\gamma}|}  \mathop{\lim}_{t\to 0} \frac{f(\eta_t)-f(\eta_0)}{t}  \right)= \arccos \left(  \frac{ 1} { |\dot{\tilde{\eta}}||\dot{\tilde{\gamma}}|}  \mathop{\lim}_{t\to 0} \frac{\tilde{f}(\tilde{\eta}_t)-\tilde{f}(\tilde{\eta}_0)}{t}  \right) \\
 &= \angle \tilde{\eta} p \tilde{\gamma}.
\end{align*}
\end{remark}

\begin{remark}[Dependence on the function $f$]\footnote{AM:added this remark}
Note also in the generality of metric spaces, the angle $\angle \gamma p\eta$ as given  in Definition \ref{def:angle-1}  may depend on the function $f$ chosen in \eqref{eq:defanglegeod} (for instance this is the case of a tree with a vertex in $p$ and two edges made by $\gamma$ and $\eta$). In case $(X,\d,\mm)$ is an $\rcdkn$-space we will see later in the paper that actually the angle between two geodesics is well defined for $\mm$-a.e. base point $p$ just in terms of the geometric data, so it does not depend on the choice of $f$.
In the general case of a metric space, a way to overcome the problem would be to fix a canonical Lipschitz function $f$ such that $\gamma$ represents $\nabla f$ at time $0$.  In view of Lemma \ref{lem:fgamma}, a natural choice is to consider $f_{\gamma}(\cdot):=-\d(\gamma_{0}, \gamma_{1}) \d(\gamma_{1}, \cdot)$. In case $(X,\d,\mm)$ is not an $\rcdkn$ space we will tacitly make such a choice so to have a good definition. 
\end{remark}

The next goal is to relate the angle between three points with the angle between two geodesics, i.e. relate  Definitions \ref{def:anglepxq} and   \ref{def:angle-1}.

\begin{theorem}\label{thm:def1def2} \footnote{AM: this thm and the proof  is improved from the last version, where there was problem due to the lack of locality of the angle between three points}
Let $(X,\d)$ be a metric space and let $p,x,q \in X$ satisfy the following assumptions:
\begin{itemize}
\item  the angle $\angle pxq$ exists in the sense of Definition \ref{def:anglepxq},
\item there exists geodesic $\gamma^{xp}, \gamma^{xq}\in \geo(X)$ from $x$ to $p$ and from $x$ to $q$ respectively. 
\end{itemize}
Then the angle $ \angle \gamma^{xp} x \gamma^{xq}$ exists in the sense of Definition \ref{def:angle-1} and
\begin{equation}\label{eq:anggammapxq}
\angle \gamma^{xp} x \gamma^{xq} = \angle pxq.
\end{equation}
\end{theorem}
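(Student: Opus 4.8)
The plan is to reduce the claim to the existence and value of a single one-sided directional derivative, and then to extract that derivative from the local-Lipschitz-constant definition of $\angle pxq$ by a two-sided squeezing argument. Set $a:=\d(x,p)$ and $b:=\d(x,q)$. First I would invoke Lemma~\ref{lem:fgamma} (applied with $q$ in place of $p$): the geodesic $\gamma^{xq}$ represents the gradient of $f:=-b\,r_q$ at $x$, so Definition~\ref{def:angle-1} applies with $\eta=\gamma^{xp}$ and $\gamma=\gamma^{xq}$. Thus $\angle\gamma^{xp}x\gamma^{xq}$ exists exactly when $\lmt{t}{0}\frac{f(\gamma^{xp}_{t})-f(x)}{t}$ exists, and since $|\dot\gamma^{xp}|=a$, $|\dot\gamma^{xq}|=b$,
\[
\cos\angle\gamma^{xp}x\gamma^{xq}=\frac{1}{ab}\lmt{t}{0}\frac{f(\gamma^{xp}_{t})-f(x)}{t}=-\frac1a\,\lmt{t}{0}\frac{r_q(\gamma^{xp}_{t})-r_q(x)}{t}.
\]
Hence everything reduces to showing that $\lmt{t}{0}\frac{r_q(\gamma^{xp}_{t})-r_q(x)}{t}$ exists and equals $-a\cos\angle pxq$; granting this, the display yields $\cos\angle\gamma^{xp}x\gamma^{xq}=\cos\angle pxq$, and since both angles lie in $[0,\pi]$ the identity \eqref{eq:anggammapxq} follows.

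The key idea is to probe $\llip(r_p+\varepsilon r_q)(x)$ with the competitor family $y_s:=\gamma^{xp}_s$, $s\to0$, i.e.\ moving \emph{towards} $p$, which is the steepest direction of the base function $r_p$. For these points $\d(x,y_s)=as$ and $r_p(y_s)-r_p(x)=-as$, so writing $n(s):=\frac{r_q(\gamma^{xp}_s)-r_q(x)}{as}\in[-1,1]$ (the bound because $r_q$ is $1$-Lipschitz),
\[
\frac{\bigl|(r_p+\varepsilon r_q)(y_s)-(r_p+\varepsilon r_q)(x)\bigr|}{\d(x,y_s)}=\frac{\bigl|-as+\varepsilon\,(r_q(y_s)-r_q(x))\bigr|}{as}=\bigl|1-\varepsilon\,n(s)\bigr|.
\]
Because $\llip(r_p+\varepsilon r_q)(x)$ is the $\overline{\lim}$ of the difference quotient over \emph{all} $y\to x$, it dominates the $\overline{\lim}$ over this subfamily, giving the free lower bound $\llip(r_p+\varepsilon r_q)(x)\ge \lmts{s}{0}|1-\varepsilon\,n(s)|$.

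Finally I would run the squeeze using both signs of $\varepsilon$, together with the hypothesis that $\angle pxq$ exists (so $\llip(r_p)(x)=1$ by Lemma~\ref{lem:fgamma} and $\frac{\llip(r_p+\varepsilon r_q)^2(x)-1}{2\varepsilon}\to\cos\angle pxq$). For small $\varepsilon>0$ one has $1-\varepsilon n(s)>0$ and $\lmts{s}{0}(1-\varepsilon n(s))=1-\varepsilon\,\underline n$ with $\underline n:=\lmti{s}{0}n(s)$; squaring $\llip(r_p+\varepsilon r_q)(x)\ge 1-\varepsilon\underline n>0$ and letting $\varepsilon\downarrow0$ gives $\cos\angle pxq\ge-\underline n$. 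For $\varepsilon<0$, with $\overline n:=\lmts{s}{0}n(s)$, the same competitor gives $\llip(r_p+\varepsilon r_q)(x)\ge 1+|\varepsilon|\,\overline n$; squaring and dividing by the negative number $2\varepsilon=-2|\varepsilon|$ reverses the inequality and yields $\cos\angle pxq\le-\overline n$. Since always $\underline n\le\overline n$, these two bounds force $\underline n=\overline n=-\cos\angle pxq$; hence $\lim_{s\to0}n(s)$ exists and $\lmt{s}{0}\frac{r_q(\gamma^{xp}_s)-r_q(x)}{s}=a\lim_{s}n(s)=-a\cos\angle pxq$, which is precisely the reduction's requirement.

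The hard part is conceptual rather than computational: the datum $\angle pxq$ controls the \emph{steepest} slope of $r_p+\varepsilon r_q$ at $x$, while the geodesic angle is governed by a derivative along the prescribed geodesic $\gamma^{xp}$, and a priori there is no reason these should be comparable. What makes the argument close is that testing $\llip$ against the steepest direction $\gamma^{xp}$ of the \emph{base} function $r_p$ only ever produces lower bounds for $\llip(r_p+\varepsilon r_q)(x)$, yet these lower bounds, taken for $\varepsilon$ of both signs, pincer the assumed limit $\cos\angle pxq$ from above and below and thereby pin down the otherwise uncontrolled $\overline n$ and $\underline n$. No reverse geodesics and no exact evaluation of $\llip$ are needed, which is what keeps the proof valid on a general metric space.
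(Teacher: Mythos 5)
Your proof is correct and takes essentially the same route as the paper's: both reduce the claim to showing that the derivative of $r_q$ along $\gamma^{xp}$ exists and equals $-\d(p,x)\cos\angle pxq$, and both obtain this by the same two-sided squeeze in $\varepsilon$, using the geodesic $\gamma^{xp}$ (along which $r_p$ has slope exactly $-1$) as the test direction controlling $\llip(r_p+\varepsilon r_q)(x)$, together with Lemma \ref{lem:fgamma} to interpret the resulting limit via Definition \ref{def:angle-1}. The only organizational differences are that you lower-bound $\llip(r_p+\varepsilon r_q)(x)$ directly from its definition and square afterwards, where the paper instead subtracts two De Giorgi-type inequalities for $f_p$ and $f_p+\varepsilon f_q$, and that your canonical choice $f=-\d(q,x)\,r_q$ for $\gamma^{xq}$ lets you skip the paper's final rescaling-plus-locality step (Remark \ref{rem:restrAngleGeo}).
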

 Note that if  $(X,\d)$ is a geodesic Lipschitz-infinitesimally Hilbertian space then for every given $p,q\in X$ the two assumptions of Theorem \ref{thm:def1def2} are satisfied for $\mm$-a.e. $x\in X$. This is in particular the case for $\rcdkn$ spaces (see Remark \ref{rem:LIH}).
 
 \begin{proof}
  Without loss of generality we can assume that $\d(p,x) \leq \d(q,x)$, otherwise just exchange the role of $p$ and $q$ in the arguments below.
 Let $f_{p}(\cdot):=- \d(p,x)\,  r_{p}(\cdot)$ and $f_{q}(\cdot):=- \d(p,x)\,  r_{q}(\cdot)$.  Recall from Lemma \ref{lem:fgamma} that $\gamma^{xp}$ represents $\nabla f_{p}$ at $x=\gamma^{xp}_{0}$ in the sense of Definition \ref{def:gradfgamma}, i.e.
 \begin{equation}\label{eq:pfgammaf1}
 \limi_{t\downarrow0} \frac{ f_{p}(\gamma^{xp}_{t})- f_{p}(\gamma^{xp}_{0})}{t}\geq \frac{\llip (f_{p})^{2}(x)}{2} + \frac{|\dot{\gamma}^{xp}|^{2}}{2}.
 \end{equation}
 On the other hand
 \begin{equation}\label{eq:pfgammaf2}
 \lims_{t\downarrow0} \frac{ (f_{p}+\varepsilon f_{q}) (\gamma^{xp}_{t})-  (f_{p}+\varepsilon  f_{q}) (\gamma^{xp}_{0}) }{t} \leq \frac{\llip( f_{p}+\varepsilon  f_{q})^{2}(x)}{2}+  \frac{|\dot{\gamma}^{xp}|^{2}}{2}.
 \end{equation}
 Subtracting \eqref{eq:pfgammaf1} from \eqref{eq:pfgammaf2} yields
 \begin{align}
 \lims_{t\downarrow0} \, \varepsilon  \;  \frac{ f_{q} (\gamma^{xp}_{t})-   f_{q} (\gamma^{xp}_{0}) }{t} &\leq \frac{\llip(f_{p}+\varepsilon  f_{q})^{2}(x)- \llip (f_{p})^{2}(x)}{2} \nonumber \\
 &=   \d(p,x)^{2} \;  \frac{\llip(r_{p}+ \varepsilon  r_{q})^{2}(x)- \llip (r_{p})^{2}(x)}{2}. \label{eq:pfgammaf3}
  \end{align}
If $\varepsilon>0$,   dividing both sides by $\varepsilon   \d(p,x)^{2}$ and  letting $\varepsilon \downarrow 0$ we get
\begin{align*}
 \lims_{t\downarrow0} \, \frac{1}{ \d(p,x)^{2}} \frac{ f_{q} (\gamma^{xp}_{t})-   f_{q} (\gamma^{xp}_{0}) }{t} &\leq \lim_{\varepsilon \downarrow 0}  \frac{\llip(r_{p}+ \varepsilon  r_{q})^{2}(x)- \llip (r_{p})^{2}(x)}{2 \varepsilon}= \cos(\angle pxq),
  \end{align*}
  where in the last identity we used the assumption that $\angle pxq$ exists.
  Analogously, if $\varepsilon<0$,   dividing both sides by $\varepsilon   \d(p,x)^{2}$ and  letting $\varepsilon \uparrow 0$ we get
  \begin{align*}
 \limi_{t\downarrow0} \, \frac{1}{ \d(p,x)^{2}} \frac{ f_{q} (\gamma^{xp}_{t})-   f_{q} (\gamma^{xp}_{0}) }{t} &\geq \lim_{\varepsilon \uparrow 0}  \frac{\llip(r_{p}+ \varepsilon  r_{q})^{2}(x)- \llip (r_{p})^{2}(x)}{2 \varepsilon}= \cos(\angle pxq).
  \end{align*}
  The combination of the last two inequalities gives the existence of the limit for $t\downarrow 0$ of  $\frac{1}{ \d(p,x)^{2}} \frac{ f_{q} (\gamma^{xp}_{t})-   f_{q} (\gamma^{xp}_{0}) }{t}$ and, more precisely, 
  \begin{align*}
 \lim_{t\downarrow 0} \, \frac{1}{ \d(p,x)^{2}} \frac{ f_{q} (\gamma^{xp}_{t})-   f_{q} (\gamma^{xp}_{0}) }{t} = \cos(\angle pxq).
  \end{align*}
Since by Lemma \ref{lem:fgamma} we know that $\gamma^{xq}$ represents the gradient of  $ -\d(q,x) r_{q}(\cdot)= \frac{\d(q,x)}{\d(p,x)}  f_{q}(\cdot)$ at $x=\gamma^{xq}_{0}$ in the sense of Definition \ref{def:gradfgamma}, we get that the rescaled geodesic $\tilde{\gamma}^{xq}$ defined by  $\tilde{\gamma}^{xq}_{t}:= \gamma^{xq}_{\frac{\d(p,x)}{\d(q,x)} t}$, $\forall t \in [0,1]$, represents the gradient of $f_{q}$ at $x=\tilde{\gamma}^{xq}_{0}$.
\\Therefore   $\angle \gamma^{xp}x\tilde{\gamma}^{xq}$ exists in the sense of Definition  \ref{def:angle-1} and $\angle \gamma^{xp}x\tilde{\gamma}^{xq}= \angle pxq$;
recalling the locality of the angle between geodesics (see  Remark \ref{rem:restrAngleGeo}), we conclude that $\angle \gamma^{xp}x \gamma^{xq}= \angle \gamma^{xp}x\tilde{\gamma}^{xq}$ exists in the sense of Definition  \ref{def:angle-1} and $\angle \gamma^{xp}x \gamma^{xq}= \angle pxq$.
 \end{proof}

\subsection{Angles in Wasserstein spaces}\label{SS:angleWasserstein}

In the  Wasserstein space, we have the notion of ``Plans representing gradients'' which is similar to the one of ``geodesic representing the gradient'' above.

\begin{definition}[Plans representing gradients, see \cite{G-O}]
Let $(X,\d,\mm)$ be a metric measure space,   $g\in S^{2}(X)$ and   $\Pi \in \mathcal{P}(C([0,1],X))$ be a test plan.  We say that $\Pi$ represents the gradient of $g$ if  
\[
\limi_{t\downarrow0}\int\frac{g(\gamma_t)-g(\gamma_0)}{t}\,\d\Pi(\gamma)\geq \frac12\int|\D g|^2(\gamma_0)\,\d\Pi(\gamma)+\frac12\lims_{t\downarrow0}\frac1t\iint_0^t|\dot\gamma_s|^2\,\d s\,\Pi(\gamma).
\]
\end{definition}

Let  $(\mu_t) \in AC^2([0,1], \mathcal{P}_2(X))$ be with uniformly bounded densities, $\Pi$ be its lifting given by Theorem \ref{prop:superposition},  and let $\varphi\in S^{2}(X)$. In case $\ms$ is infinitesimally Hilbertian,  it is proved in \cite[Theorem 4.6]{GH-C} that $\Pi$ represents the gradient of $\varphi$ if and only if 
\begin{equation}\label{eq:contgrad}
\ddt  {\Big|_{t=0}} \int_{X} f\,\d \mu_t =\int_{X} \la \nabla f, \nabla \varphi\ra\,\d\mu_0, \quad \forall f\in S^{2}(X).
\end{equation}
If \eqref{eq:contgrad} holds,  we also say that the velocity field of $\mu_t$ at time $0$ is $\nabla \varphi$. 

Combing the above technical tools with ideas from Otto's calculus \cite{O-G}, we can define the angle between two geodesics in $\ws$.
\begin{definition}[Angle between curves in $\ws$]\label{def:anglewasserstein}
Let $\ms$  be an infinitesimally Hilbertian metric measure space, let $(\mu_t), (\nu_{t}) \in AC^2([0,1], \mathcal{P}_2(X))$ be with bounded compression, and  such that $\mu_0=\nu_0=:\eta$.   Assume  there exist  lifting test  plans of $(\mu_t)$ and $(\nu_t)$ representing the gradients of $f$ and $g$ respectively,  for some $f,g\in S^{2}(X)$. Then the angle between  $\mu=(\mu_t)$ and $\nu=(\nu_t)$ at  $t=0$ is defined by
\[
[0,\pi] \ni \angle^W \mu \eta \nu:=\arccos \left( \frac{\int_{X} \la \nabla f, \nabla g \ra\,\d \eta }{\| |\D g| \|_{L^2(X,\eta)} \| |\D f| \|_{L^2(X,\eta)}} \right).
\]
The same definition makes sense if  $f, g \in S^{2}_{loc}(X)$ provided $(\mu_{t}),(\nu_{t})$ have uniformly bounded supports.
\end{definition}
From the formula \eqref{eq:contgrad}, we can see that the value of the angle does not depend on the choice of $f, g$,   but just on $(\mu_{t})$, $(\nu_{t})$.
\\

\begin{remark}[Locality of the angle in the  Wasserstein space]\label{rem:locAngleW}
The angle  $\angle^W \mu \eta \nu$ depends just on  the germs of the curves $\mu$ and $\nu$ at $t=0$; i.e., given $T_{1},T_{2}\in (0,1)$, called $\tilde{\mu}_{t}:=\mu_{T_{1}t},  \tilde{\nu}_{t}:=\nu_{T_{2}t}$  for all $t\in [0,1]$ the restrictions of $\mu, \nu$ to $[0,T_{1}]$ and $[0,T_{2}]$ respectively, it holds  $\angle \mu \eta \nu=\angle \tilde{\mu} \eta \tilde{\nu}$. Indeed let  $\Pi$, lift of the curve $(\mu_{t})_{t \in [0,1]}$, be a test plan representing the gradient of $f \in S^{2}(X)$; fix  $T\in (0,1)$ and  let $\tilde{\mu}_{t}:=\mu_{Tt}$ for every $t \in [0,1]$ be the restriction of the curve $\mu$ to $[0,T]$; called $\tilde{\Pi}$ the lift of $(\tilde{\mu}_{t})_{t \in [0,1]}$, it is easily seen that $\tilde{\Pi}$ represents the gradient of $\tilde{f}:=T f$. The claim follows.
\end{remark}

Thanks to the locality expressed in Remark \ref{rem:locAngleW}, given two curves $(\mu_{t})_{t\in [0,1]},(\nu_{t})_{t\in [0,1]}$ such that they are of bounded compression once restricted to $[0,T]$ for some $T\in (0,1)$,  we can define the angle between them as the angle between their restrictions $\tilde{\mu}, \tilde{\nu}$ to $[0,T]$. This will be always tacitly assumed throughout the paper.
\\

Let us briefly discuss the  particular but important case  when $(\mu_{t})$ and $(\nu_{t})$ are $\ws$-geodesics in a general m.m.s. $\ms$.  If $(\mu_{t})$ is a  $\ws$-geodesic with bounded compression   then any lift $\Pi$ of $(\mu_{t})$ is a test plan and moreover  is an optimal dynamical plan, i.e. $\Pi \in \opt\geo(\mu_{0},\mu_{1})$.  Moreover, as a consequence of the Metric Brenier Theorem proved in  \cite{AGS-C} (see also \cite[Theorem 5.2]{GH-C} for the present formulation), if $(\mu_{t})$ has bounded compression and $\varphi\in S^{2}(X)$ is a Kantorovich potential from $\mu_{0}$ to $\mu_{1}$,  then any lift $\Pi$ of  $(\mu_{t})$ represents the gradient of $-\varphi$. Therefore, specializing Definition \ref{def:anglewasserstein} to this case we get the following notion.

\begin{definition}[Angle between geodesics in $\ws$]\label{def:anglewassersteinGeo}
Let $\ms$  be an infinitesimally Hilbertian metric measure space, let $(\mu_t), (\nu_{t})$ be  $\ws$-geodesics with bounded compression, and  such that $\mu_0=\nu_0=:\eta$.   Assume  there exist $\varphi, \psi \in S^{2}(X)$ Kantorovich potentials from $\mu_{0}$ to $\mu_{1}$ and from $\nu_{0}$ to $\nu_{1}$ respectively. Then the angle between  $\mu=(\mu_t)$ and $\nu=(\nu_t)$ at  $t=0$ is defined by
\[
[0,\pi] \ni  \angle^W \mu \eta \nu :=\arccos \left( \frac{\int_{X} \la \nabla \varphi, \nabla \psi \ra\,\d \eta }{\| |\D \varphi| \|_{L^2(X,\eta)} \| |\D \psi| \|_{L^2(X,\eta)}} \right).
\]
The same definition makes sense if  $\varphi, \psi \in S^{2}_{loc}(X)$ provided $(\mu_{t}),(\nu_{t})$ have uniformly bounded supports.
\end{definition}

Note that, thanks to Otto calculus and \eqref{eq:contgrad},  Definition \ref{def:anglewassersteinGeo} is the analog for $\ws$ geometry of the angle between two geodesics in a general metric space in the sense of Definition  \ref{def:angle-1}.



\subsection{The case of $\rcdkn$ spaces}

In Theorem \ref{thm:def1def2} we related  the angle between three points with the angle between two geodesics, i.e.  we related  Definitions \ref{def:anglepxq} and   \ref{def:angle-1}.
Now, adding  a curvature assumption on the space, we wish to relate  Definition  \ref{def:anglewassersteinGeo} with Definition  \ref{def:anglepxq} and Definition  \ref{def:angle-1}, i.e. the angle  between two geodesics in $\ws$ with the angle between three points and the angle between two geodesics of $X$.  To this aim the next lemma will be useful.

\begin{lemma}\label{prop:well-1}
Let $\ms$ be an $\rcdkn$ space, and let $\varphi_{1}, \varphi_{2}$ be locally Lipschitz functions on $X$.   Then the functions 
\begin{align*}
F^+(x)&:=\mathop{\lim}_{\epsilon \downarrow 0}\frac{|\lip{\varphi_1+\epsilon \varphi_2}|^2(x)- |\lip{ \varphi_1}|^2(x)}{2\epsilon},  \\
F^-(x)&:=\mathop{\lim}_{\epsilon \uparrow 0}\frac{|\lip{\varphi_1+\epsilon \varphi_2}|^2(x)- |\lip{ \varphi_1}|^2(x)}{2\epsilon}
\end{align*}
are well defined  at every $x \in X$ and it holds
 $$F^+=F^-=\la \nabla \varphi_1, \nabla \varphi_2 \ra, \quad \mm\text{-a.e. }.$$ 
\end{lemma}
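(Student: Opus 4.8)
The plan is to combine a purely metric convexity argument (which will give existence of $F^{\pm}$ at \emph{every} $x$) with Cheeger's identity $\lip{h}=|\D h|$ together with the bilinearity of the infinitesimal inner product (which will identify the limits with $\la\nabla\varphi_1,\nabla\varphi_2\ra$ a.e.). First I would fix $x$ and record that $\epsilon\mapsto g(\epsilon):=\lip{\varphi_1+\epsilon\varphi_2}(x)$ is convex and finite: the local Lipschitz constant is subadditive and positively $1$-homogeneous, so
\[
g(\lambda\epsilon_1+(1-\lambda)\epsilon_2)\le\lambda\,g(\epsilon_1)+(1-\lambda)\,g(\epsilon_2),\qquad \lambda\in[0,1],
\]
while the local Lipschitzianity of $\varphi_1,\varphi_2$ gives $g(\epsilon)<\infty$ for every $\epsilon$. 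Since $t\mapsto t^2$ is convex and nondecreasing on $[0,\infty)$, the composition $G(\epsilon):=|\lip{\varphi_1+\epsilon\varphi_2}|^2(x)=g(\epsilon)^2$ is again convex and finite on $\R$, hence continuous, and therefore admits finite one-sided derivatives at $\epsilon=0$. As $F^+(x)$ (resp. $F^-(x)$) is exactly one half of the right (resp. left) derivative of $G$ at $0$, both are well defined at every $x\in X$; this settles the first assertion by soft convexity alone.

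For the a.e. identification I would use that an $\rcdkn$ space (with $N<\infty$) is locally doubling and supports a local Poincar\'e inequality, so by Cheeger's theorem (see Remark \ref{rem:LIH}) one has $\lip{h}=|\D h|$ $\mm$-a.e. for every locally Lipschitz $h$, after the usual localization by a cut-off to reduce to the global Sobolev setting and using the locality of $|\D\cdot|$ and of $\la\nabla\cdot,\nabla\cdot\ra$. Combined with infinitesimal Hilbertianity, for each \emph{fixed} $\epsilon$ the bilinearity of $\la\nabla\cdot,\nabla\cdot\ra$ yields, $\mm$-a.e.,
\[
G(\epsilon)=|\D(\varphi_1+\epsilon\varphi_2)|^2=|\D\varphi_1|^2+2\epsilon\,\la\nabla\varphi_1,\nabla\varphi_2\ra+\epsilon^2|\D\varphi_2|^2 .
\]
The right-hand side is a genuine quadratic polynomial in $\epsilon$ whose coefficients are $\mm$-measurable functions independent of $\epsilon$.

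The main obstacle is that the Cheeger identity and the expansion above hold only $\mm$-a.e. with an exceptional null set that depends on $\epsilon$, so one cannot naively take the union over the continuum of all $\epsilon\in\R$. I would circumvent this exactly as in the proof of Lipschitz-infinitesimal Hilbertianity: choose a countable dense set $D\subset\R$ (say $D=\mathbb{Q}$), and let $A$ be the intersection over $\epsilon\in D$ of the corresponding full-measure sets, together with the full-measure sets on which $\lip{\varphi_i}=|\D\varphi_i|$ for $i=1,2$; then $A$ has full measure and the displayed quadratic identity holds for every $x\in A$ and every $\epsilon\in D$. For fixed $x\in A$, both sides are continuous functions of $\epsilon$ (the left side by the convexity established above, the right side being a polynomial) agreeing on the dense set $D$, hence they agree for all $\epsilon\in\R$. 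Thus, for every $x\in A$, the function $\epsilon\mapsto G(\epsilon)$ equals the exact quadratic $|\lip{\varphi_1}|^2(x)+2\epsilon\,\la\nabla\varphi_1,\nabla\varphi_2\ra(x)+\epsilon^2|\lip{\varphi_2}|^2(x)$, whose two-sided derivative at $\epsilon=0$ is $2\,\la\nabla\varphi_1,\nabla\varphi_2\ra(x)$. Consequently $F^+(x)=F^-(x)=\la\nabla\varphi_1,\nabla\varphi_2\ra(x)$ for every $x\in A$, i.e. $\mm$-a.e., which is the claim.
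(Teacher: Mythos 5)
Your proof is correct and follows essentially the same route as the paper: convexity of $\epsilon\mapsto|\lip{\varphi_1+\epsilon\varphi_2}|^2(x)$ to get existence of the one-sided limits at every point, then Cheeger's identity $\lip{h}=|\D h|$ (valid since $\rcdkn$ spaces are locally doubling and support a Poincar\'e inequality) combined with infinitesimal Hilbertianity to identify the limits with $\la\nabla\varphi_1,\nabla\varphi_2\ra$ $\mm$-a.e. The only cosmetic difference is that you handle the $\epsilon$-dependent null sets explicitly via a countable dense set of $\epsilon$'s and continuity in $\epsilon$, whereas the paper absorbs this into the monotonicity of the difference quotients and the identities $\la\nabla\varphi_1,\nabla\varphi_2\ra=\mathop{\mathrm{ess~inf}}_{\epsilon>0}F_\epsilon=\mathop{\mathrm{ess~sup}}_{\epsilon<0}F_\epsilon$; both devices accomplish the same reduction.
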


\begin{proof}
From the definition of local Lipschitz constant we know that the function $\epsilon \mapsto |\lip{\varphi_1+\epsilon \varphi_2}|^2(x)$ is convex for any $x$. 
Consider the function 
\[
F_\epsilon(x):=\epsilon \mapsto \frac{|\lip{\varphi_1+\epsilon \varphi_2}|^2(x)- |\lip{ \varphi_1}|^2(x)}{2\epsilon},
\]
and observe that $\epsilon\mapsto F_\epsilon(x)$ is non-decreasing on $(-\infty,0)$ and $(0, +\infty)$ for any fixed $x$. Hence $F^+$ and $F^-$ are well-defined for any point $x\in X$ as
\begin{align*}
F^+(x):= \mathop{\inf}_{\epsilon >0 } F_{\epsilon}(x)=\mathop{\lim}_{\epsilon \downarrow 0}  F_{\epsilon}(x), \quad  F^-(x):=\mathop{\sup}_{\epsilon <0} F_{\epsilon}(x) = \mathop{\lim}_{\epsilon \uparrow 0} F_{\epsilon}(x).
\end{align*}
Since  $\ms$ is a $\rcdkn$ metric measure space, it holds a local Poincar\'e inequality and it is locally doubling. Then, from  \cite[Theorem 6.1]{C-D},  we know $\lip{f}(x)=|\D f|(x)$ for $\mm$-a.e. $x \in X$. The definition of infinitesimal Hilbertian space and  of $\la \nabla \varphi_1, \nabla \varphi_2 \ra$, then gives 
\[
\la \nabla \varphi_1, \nabla \varphi_2 \ra=\mathop{\mathrm{ess~inf}}_{\epsilon>0} \frac{|\D(\varphi_1+\epsilon \varphi_2)|^2- |\D \varphi_1|^2}{2\epsilon}=\mathop{\mathrm{ess~inf}}_{\epsilon>0} F_\epsilon
\]
and
\[
\la \nabla \varphi_1, \nabla \varphi_2 \ra=\mathop{\mathrm{ess~sup}}_{\epsilon<0} \frac{|\D(\varphi_1+\epsilon \varphi_2)|^2- |\D \varphi_1|^2}{2\epsilon}=\mathop{\mathrm{ess~sup}}_{\epsilon<0} F_\epsilon.
\]
Hence 
\[
\la \nabla \varphi_1, \nabla \varphi_2 \ra=\mathop{\mathrm{ess~inf}}_{\epsilon>0} F_\epsilon=\mathop{\mathrm{ess~sup}}_{\epsilon<0} F_\epsilon.
\]
In particular, we infer $F^+=F^-=\la \nabla \varphi_1, \nabla \varphi_2 \ra$ $\mm$-a.e..
\end{proof}

In the next result we relate  Definition  \ref{def:anglewassersteinGeo} with Definition  \ref{def:anglepxq}.  Before stating it, let us recall \cite[Theorem 1.1]{GRS-O} that  if $\ms$ is an $\rcdkn$ m.m.s., $\mu_{0},\mu_{1}\in \mathcal{P}_{2}(X)$ with $\mu_{0}\ll \mm$, then there exists a unique $\ws$ geodesic connecting $\mu_{0}$ and $\mu_{1}$; let us mention that the same result holds more generally for essentially non-branching m.m.s satisfying the weaker ${\textrm {MCP}}(K,N)$ condition \cite{CM-O}. 

\begin{proposition}\label{prop:AngleWpoints}
Let $\ms$ be an $\rcdkn$ m.m.s.  and fix $p,q \in X$.  For every $x \in X$ and  $R>0$  let $\mu^{R}_{0}=\nu^{R}_{0}=\eta^{R}:=\frac{1}{\mm(B_{R}(x))} \mm \llcorner B_{R}(x)$ and let $\mu^{R}:=(\mu^{R}_{t})_{t\in [0,1]},\nu^{R}:=(\nu^{R}_{t})_{t\in [0,1]}$ be the unique $\ws$-geodesics from 
$\mu^{R}_{0}$ to $\delta_{p}$ and from $\nu^{R}_{0}$ to $\delta_{q}$ respectively. Then
\begin{equation}
\angle pxq = \lim_{R\downarrow 0} \angle^{W}  \mu^{R} \eta^{R} \nu^{R}, \quad \text{for  $\mm$-a.e. $x \in X$}.
\end{equation}
\end{proposition}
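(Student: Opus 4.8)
The plan is to compute $\angle^W\mu^R\eta^R\nu^R$ explicitly through suitable Kantorovich potentials and then to let $R\downarrow0$, using the Lebesgue differentiation theorem to recover the pointwise value $\la\nabla r_p,\nabla r_q\ra(x)$, which Lemma \ref{prop:well-1} and Definition \ref{def:anglepxq} identify with $\cos\angle pxq$.

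First I would identify the potentials. For fixed $x$ and $R$ the $\ws$-geodesic $\mu^R$ from $\eta^R\ll\mm$ to $\delta_p$ is the unique one by \cite{GRS-O}; its lift is concentrated on geodesics from points of $B_R(x)$ to $p$, so $\mu^R$ (and similarly $\nu^R$) has uniformly bounded support. Transport to the Dirac mass $\delta_p$ admits the locally Lipschitz Kantorovich potential $\varphi_p:=\tfrac12 r_p^2\in S^2_{loc}(X)$ (and $\varphi_q:=\tfrac12 r_q^2$ for $\nu^R$), as follows directly from the fact that the optimal plan sends all the mass to $p$; the overall sign and normalization are irrelevant since they cancel in the angle formula of Definition \ref{def:anglewassersteinGeo}. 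A point that must be addressed is that $\mu_1^R=\delta_p$ is not absolutely continuous, so $\mu^R$ fails to have bounded compression on all of $[0,1]$. I would bypass this through the locality of the Wasserstein angle (Remark \ref{rem:locAngleW} and the discussion following it): for every $T<1$ the restricted curve $(\mu^R_{Tt})_{t\in[0,1]}$ does have bounded compression --- this follows from the measure contraction property of $\rcdkn$ spaces, which bounds the interpolant density by $C(T)$ times that of $\eta^R$ on $[0,T]$ --- and the angle may be computed on such a restriction.

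Next, recalling that $r_p,r_q$ are $1$-Lipschitz with $\lip{r_p}=\lip{r_q}=1$ on the geodesic space $X$, so that $|\D r_p|=|\D r_q|=1$ $\mm$-a.e. by \cite{C-D}, the chain rule gives $\nabla\varphi_p=r_p\nabla r_p$, $|\D\varphi_p|=r_p$, and analogously for $q$. Hence Definition \ref{def:anglewassersteinGeo} yields
\[
\cos\big(\angle^W\mu^R\eta^R\nu^R\big)=\frac{\displaystyle\int_X r_p\,r_q\,\la\nabla r_p,\nabla r_q\ra\,\d\eta^R}{\Big(\int_X r_p^2\,\d\eta^R\Big)^{1/2}\Big(\int_X r_q^2\,\d\eta^R\Big)^{1/2}}.
\]
Since $r_p,r_q$ are continuous, $r_p\to\d(p,x)$ and $r_q\to\d(q,x)$ uniformly on $B_R(x)$ as $R\downarrow0$, so the denominator converges to $\d(p,x)\,\d(q,x)$. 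Using $|\la\nabla r_p,\nabla r_q\ra|\le|\D r_p|\,|\D r_q|=1$ $\mm$-a.e.\ (Cauchy--Schwarz), the factor $r_p r_q$ in the numerator can be replaced by the constant $\d(p,x)\d(q,x)$ up to an error tending to $0$, leaving $\frac{1}{\mm(B_R(x))}\int_{B_R(x)}\la\nabla r_p,\nabla r_q\ra\,\d\mm$. As $\rcdkn$ spaces are locally doubling, the Lebesgue differentiation theorem gives, for $\mm$-a.e.\ $x$,
\[
\lim_{R\downarrow0}\cos\big(\angle^W\mu^R\eta^R\nu^R\big)=\la\nabla r_p,\nabla r_q\ra(x).
\]
Finally, applying Lemma \ref{prop:well-1} with $\varphi_1=r_p,\ \varphi_2=r_q$ identifies $\la\nabla r_p,\nabla r_q\ra(x)$ with $F^+(x)=F^-(x)$ $\mm$-a.e., which by Definition \ref{def:anglepxq} equals $\cos\angle pxq$; since $\arccos$ is continuous the claim follows.

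I expect the main obstacle to be the first step: pinning down that the Kantorovich potential of transport to a Dirac mass is (a multiple of) $\tfrac12 r_p^2$, and handling the failure of bounded compression at the Dirac endpoint by restricting to $[0,T]$ via the locality of the angle. Once the explicit cosine formula is in place, the passage to the limit is a routine combination of the continuity of $r_p,r_q$ with the Lebesgue differentiation theorem.
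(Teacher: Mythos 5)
Your proposal is correct and follows essentially the same route as the paper's own proof: the same Kantorovich potentials $\tfrac12 r_p^2,\tfrac12 r_q^2$, the same handling of the Dirac endpoint via bounded compression on $[0,1-\delta]$ (cf.\ \cite[Corollary 1.7]{GRS-O}) together with locality of the angle, the same explicit cosine quotient, and the same conclusion via Lebesgue differentiation and Lemma \ref{prop:well-1}. Your write-up merely spells out some steps (chain rule, replacement of $r_p r_q$ by its limit value) that the paper leaves implicit.
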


\begin{proof}
Calling as usual $r_{p}(\cdot):=\d(p,\cdot),r_{q}(\cdot):=\d(q,\cdot)$, Lemma \ref{prop:well-1} implies
\begin{align*}
\cos (\angle pxq) &:=\lim_{\varepsilon \to 0} \frac{|\llip(r_{p}+\varepsilon r_{q})|^{2}(x) - |\llip(r_{p})|^{2}(x)  }{2 \varepsilon}= \la \nabla r_{p}, \nabla r_{q} \ra (x), \; \text{$\mm$-a.e. $x \in X$}.
\end{align*}
On the other hand, it is easily seen that $\varphi(\cdot):= \frac{1}{2} r_{p}(\cdot)^{2}, \psi(\cdot):= \frac{1}{2}  r_{q}(\cdot)^{2}$ are Kantorovich potentials from $\eta=\mu^{R}_{0}$ to $\delta_{p}$ and from $\eta=\nu^{R}_{0}$ to $\delta_{q}$ respectively. Moreover the geodesics $(\mu^{R}_{t}), (\nu^{R}_{t})$ have uniformly bounded supports, and bounded compression once restricted to $[0,1-\delta]$ for every $\delta\in (0,1)$, see for instance \cite[Corollary 1.7]{GRS-O}. Then,  Definition \ref{def:anglewassersteinGeo} yields
\begin{align*}
\lim_{R\downarrow 0}\cos( \angle^{W} \mu^{R} \eta^{R} \nu^{R})&:=  \lim_{R\downarrow 0} \frac{\int_{X} \la \nabla \varphi, \nabla \psi \ra\,\d \eta }{\| |\D \varphi| \|_{L^2(X,\eta)} \| |\D \psi| \|_{L^2(X,\eta^{R})}}\\
&=  \lim_{R\downarrow 0} \frac{\frac{1}{\mm(B_{R}(x))}\int_{B_{R}(x)} r_{p}(y) \, r_{q}(y) \, \la  \nabla r_{p}, \nabla r_{q} \ra (y) \,\d \mm(y) }{\| r_{p} \|_{L^2(X,\eta^{R})} \| r_{q} \|_{L^2(X,\eta^{R})}}\\
&= \la \nabla r_{p}, \nabla r_{q} \ra (x)  \quad \text{for  $\mm$-a.e. $x \in X$}. 
\end{align*}
The combination of the two formulas gives the claim.
\end{proof}

\begin{remark}
For  uniformity with the rest of the paper we decided to state Proposition \ref{prop:AngleWpoints} for $\rcdkn$ spaces, but using the results of \cite{CM-O}  the same conclusion holds for essentially non-branching Lipschitz-infinitesimally Hilbertian spaces satisfying $\mathrm{MCP}(K,N)$. 
\end{remark}

In the  next result  we relate Definition  \ref{def:angle-1} with the optimal transport picture.

\begin{proposition}\label{prop:well-2}
Assume that $\ms$ is an $\rcdkn$ metric measure space. Let  $(\mu^1_t)$ and $(\mu^2_t)$ be  $\mathcal{W}_2$-geodesics with bounded compression and with $\mu^{1}_{0}=\mu^{2}_{0}=:\eta$;  let $\Pi_{1},\Pi_{2}\in \opt\geo(X)$ be  corresponding lifts.
Then, for $i=1,2$, we can find $\Gamma_{i}\subset \geo(X)$ with $\Pi_{i}(\Gamma_{i})=1$, such that  for  $\eta$-a.e. $x$ there exist unique geodesics $\gamma^{x,i}\in \Gamma_{i}$ with $\gamma^{x,i}_{0}=x$,  and the angle $\angle \gamma^{x,1} x \gamma^{x,2}$  exists according to the Definition \ref{def:angle-1}. Moreover
\begin{align}
\cos \angle \gamma^{x,1} x \gamma^{x,2}&= \cos \angle \gamma^{x,2} x \gamma^{x,1}=\lim_{t \downarrow 0}  \frac {\varphi_1(\gamma^{x,2}_{t})-\varphi_1(\gamma^{x,2}_{0})}{t \;  \llip(\varphi_{1})(x)  \; |\dot{\gamma}^{x,2}|}=\lim_{t \downarrow 0} \frac {\varphi_2(\gamma^{x,1}_{t})-\varphi_2(\gamma^{x,1}_{0}) }{t \;  \llip(\varphi_{2}) (x)\; |\dot{\gamma}^{x,1}|} \nonumber\\
&=\frac{\la \nabla \varphi_1, \nabla \varphi_2\ra(x)}{\llip(\varphi_{1})(x)\; \llip(\varphi_{2})(x) }, \quad \text{for $\eta$-a.e. $x$}, \label{eq:well-0}
\end{align}
where $-\varphi_i \in S^{2}(X)$ is any locally Lipschitz Kantorovich  potential from $\eta=\mu^i_0$ to $\mu^i_1$. 
\end{proposition}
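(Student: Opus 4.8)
The plan is to reduce the identity \eqref{eq:well-0} to two pointwise statements holding for $\eta$-a.e.\ $x$: that the geodesic $\gamma^{x,i}$ represents the gradient of $\varphi_i$ in the sense of Definition \ref{def:gradfgamma} (for $i=1,2$), and that the derivative of $\varphi_1$ along $\gamma^{x,2}$ (and, symmetrically, of $\varphi_2$ along $\gamma^{x,1}$) equals $\la\nabla\varphi_1,\nabla\varphi_2\ra(x)$; the cosine formula then drops out by unwinding Definition \ref{def:angle-1}. First I would fix the measure-theoretic setup. Bounded compression gives $\eta\ll\mm$, and since $\ms$ is essentially non-branching with $\mu^i_0\ll\mm$, the optimal plan $\Pi_i$ is induced by a map and there is a \emph{unique} geodesic of $\supp\Pi_i$ issuing from $\eta$-a.e.\ $x$; this produces the sets $\Gamma_i$ and the selections $\gamma^{x,i}$ (see \cite{GRS-O}). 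By the Metric Brenier Theorem (\cite{AGS-C}, \cite[Theorem 5.2]{GH-C}) each $\Pi_i$ represents the gradient of $\varphi_i$ and $|\D\varphi_i|(\gamma_0)=|\dot\gamma|$ for $\Pi_i$-a.e.\ $\gamma$, while $\llip(\varphi_i)=|\D\varphi_i|$ holds $\mm$-a.e.\ (hence $\eta$-a.e.) by Cheeger's theorem, exactly as in Lemma \ref{prop:well-1}. Using the uniformly bounded supports together with the locality of the angle (Remark \ref{rem:restrAngleGeo}), I may also assume $\varphi_i\in\mathrm{LIP}(X)$ after modifying it outside a large ball.

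The main obstacle is to upgrade ``$\Pi_i$ represents $\nabla\varphi_i$'' to the \emph{pointwise} statement that $\Pi_i$-a.e.\ $\gamma$ represents $\nabla\varphi_i$, i.e.\ $\limitz\frac{\varphi_i(\gamma_t)-\varphi_i(\gamma_0)}{t}\ge\frac12\llip(\varphi_i)^2(\gamma_0)+\frac12|\dot\gamma|^2=|\dot\gamma|^2$. The upper bound $\limstz\frac{\varphi_i(\gamma_t)-\varphi_i(\gamma_0)}{t}\le\llip(\varphi_i)(\gamma_0)\,|\dot\gamma|=|\dot\gamma|^2$ always holds, but a purely functional-analytic disintegration of the plan inequality only yields $L^1$- and subsequential convergence of the difference quotients, which is insufficient for the $\liminf$. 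Instead I would exploit the $c$-concavity of the Kantorovich potential $\psi:=-\varphi_i$ along the optimal geodesic: for $\Pi_i$-a.e.\ $\gamma$ the pair $(\gamma_0,\gamma_1)$ lies in the support, so $\psi(\gamma_0)+\psi^c(\gamma_1)=\frac12\d^2(\gamma_0,\gamma_1)$, whereas $\psi(\gamma_t)+\psi^c(\gamma_1)\le\frac12\d^2(\gamma_t,\gamma_1)=\frac{(1-t)^2}{2}\d^2(\gamma_0,\gamma_1)$. Subtracting and dividing by $t$ yields $\frac{\psi(\gamma_t)-\psi(\gamma_0)}{t}\le(-1+\tfrac t2)|\dot\gamma|^2$, that is $\limitz\frac{\varphi_i(\gamma_t)-\varphi_i(\gamma_0)}{t}\ge|\dot\gamma|^2$, as required. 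Combined with the upper bound, the limit exists and $\gamma$ represents $\nabla\varphi_i$.

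Next I would compute the cross derivative. Fixing $\gamma=\gamma^{x,2}$, which represents $\nabla\varphi_2$, I apply the always-true upper bound to $\varphi_2+\epsilon\varphi_1$, subtract the representation identity $\lim_t\frac{\varphi_2(\gamma_t)-\varphi_2(\gamma_0)}{t}=\frac12\llip(\varphi_2)^2(\gamma_0)+\frac12|\dot\gamma|^2$, and use the elementary inequality $\limstz(a_t+b_t)\ge\limitz a_t+\limstz b_t$ to isolate the $\varphi_1$-term. Dividing by $\epsilon$ and letting $\epsilon\downarrow0$, respectively $\epsilon\uparrow0$, the right-hand sides converge to $F^+(\gamma_0)$ and $F^-(\gamma_0)$, which by Lemma \ref{prop:well-1} both equal $\la\nabla\varphi_1,\nabla\varphi_2\ra(\gamma_0)$ $\mm$-a.e. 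This sandwiches $\lim_t\frac{\varphi_1(\gamma^{x,2}_t)-\varphi_1(\gamma^{x,2}_0)}{t}=\la\nabla\varphi_1,\nabla\varphi_2\ra(x)$ for $\eta$-a.e.\ $x$, and symmetrically for $\varphi_2$ along $\gamma^{x,1}$. Note that this step only requires the representation property of $\gamma^{x,2}$ established in the previous paragraph, not a full limit for the mixed quotient a priori.

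Finally, I would assemble the cosine formula. Plugging the two cross-derivative identities into Definition \ref{def:angle-1} (with $f=\varphi_2$, respectively $f=\varphi_1$) and normalizing by $|\dot\gamma^{x,1}|\,|\dot\gamma^{x,2}|=\llip(\varphi_1)(x)\,\llip(\varphi_2)(x)$ (using once more that speed equals gradient norm) produces both limit expressions appearing in \eqref{eq:well-0}. Since each of them reduces to $\la\nabla\varphi_1,\nabla\varphi_2\ra(x)/\big(\llip(\varphi_1)(x)\,\llip(\varphi_2)(x)\big)$, the value is independent of which geodesic plays the role of the base curve, which simultaneously yields the existence of $\angle\gamma^{x,1}x\gamma^{x,2}$, its symmetry $\cos\angle\gamma^{x,1}x\gamma^{x,2}=\cos\angle\gamma^{x,2}x\gamma^{x,1}$, and the full chain of equalities in \eqref{eq:well-0}. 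The decisive point, and the only genuinely delicate one, is the $\liminf$ estimate in the second paragraph, where the $c$-concavity of the potential replaces the soft functional-analytic argument that would otherwise fail.
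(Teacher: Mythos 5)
Your proof is correct, and it follows the same architecture as the paper's: the $\varepsilon$-perturbation $\varphi_2+\epsilon\varphi_1$ combined with Lemma \ref{prop:well-1} (via $F^{+}=F^{-}=\la\nabla\varphi_1,\nabla\varphi_2\ra$ $\mm$-a.e., transferred to $\eta$-a.e. by bounded compression) to identify the cross derivative, the a.e. uniqueness of geodesics from \cite[Theorem 3.4]{GRS-O} to select $\gamma^{x,i}$, and the final assembly through Definition \ref{def:angle-1}. The one place where you genuinely diverge is the step you yourself single out as decisive: upgrading the plan-level statement that $\Pi_i$ represents $\nabla\varphi_i$ to the pointwise statement that $\Pi_i$-a.e. $\gamma$ represents $\nabla\varphi_i$. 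The paper obtains this in one line, quoting \cite[Proposition 3.11]{G-O} to get exactly the identity \eqref{eq:well-2}; you instead reprove it from scratch, using the $c$-concavity of $\psi=-\varphi_i$ (equality $\psi(\gamma_0)+\psi^c(\gamma_1)=\tfrac12\d^2(\gamma_0,\gamma_1)$ on the support of the optimal coupling, inequality elsewhere) to obtain $\limitz\frac{\varphi_i(\gamma_t)-\varphi_i(\gamma_0)}{t}\geq|\dot\gamma|^2$, and then the Metric Brenier theorem together with Cheeger's identification $\llip(\varphi_i)=|\D\varphi_i|$ $\mm$-a.e. to recognize $|\dot\gamma|^2$ as $\tfrac12\llip(\varphi_i)^2(\gamma_0)+\tfrac12|\dot\gamma|^2$. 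This is a correct and self-contained substitute --- it is in essence the proof of the cited result --- so your route buys independence from \cite{G-O} at the cost of rehearsing a known argument, while the paper's citation keeps the proof shorter. One cosmetic caveat: your reduction to $\varphi_i\in\mathrm{LIP}(X)$ invokes ``uniformly bounded supports,'' which is not among the hypotheses (only bounded compression is assumed); since the representation property and all the limits in \eqref{eq:well-0} are purely local, a truncation near each base point fixes this, and the paper itself glosses over the same point by working directly with locally Lipschitz potentials.
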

\begin{proof}
From \cite{AGS-M,GH-C} we know that any lift  $\Pi_i$ of $(\mu_t^{i})$ represents the gradient of  $ \varphi_i$, for $i=1,2$,  i.e:
\begin{equation*}
\lim_{t\downarrow0}\int\frac{\varphi_i(\gamma_t)-\varphi_i(\gamma_0)}{t}\,\d\Pi_i(\gamma)\geq \frac12\int|\D \varphi_{i}|^2(\gamma_0)\,\d\Pi_i(\gamma)+\frac12\lims_{t\downarrow0}\frac1t\iint_0^t|\dot\gamma_s|^2\,\d s\,\d\Pi_i(\gamma).
\end{equation*}
From  \cite[Proposition 3.11]{G-O} we then get  for $i=1,2$:
\begin{equation}\label{eq:well-2}
\lim_{t\downarrow 0} \frac{\varphi_i(\gamma_{t})-\varphi_i(\gamma_{0})}{t}=\frac{1}{2}|\D\varphi_i|^2(\gamma_{0}) +\frac12 |\dot{\gamma}|^2=\frac{1}{2}|\lip{\varphi_i}|^2(\gamma_{0}) +\frac12 |\dot{\gamma}|^2,  \; \Pi_{i} \text{-a.e.  }\gamma.
\end{equation}
In other words, for $\Pi_{i}$-a.e. $\gamma$, we have that $\gamma$ represents $\nabla \varphi_{i}$ at $\gamma_{0}$, $i=1,2$.
\\For any $\epsilon >0$, consider the function $\varphi_1+\epsilon \varphi_2$ and observe that 
\begin{equation}\label{eq:well-3}
\limstz  \frac{(\varphi_1+\epsilon \varphi_2)(\gamma_{t})-(\varphi_1+\epsilon \varphi_2)(\gamma_{0})}{t} \leq \frac{1}{2}|\lip{\varphi_1+\epsilon \varphi_2}|^2(\gamma_{0}) +\frac12 |\dot{\gamma}|^2, \quad  \forall \gamma\in \geo(X).
\end{equation}
The difference between \eqref{eq:well-3} and \eqref{eq:well-2}, for $i=1$, gives
\begin{equation}\label{eq:well-4}
\epsilon  \limstz  \frac{ \varphi_2(\gamma_{t})-\varphi_2(\gamma_{0})}{t} \leq \frac{|\lip{\varphi_1+\epsilon \varphi_2}|^2(\gamma_{0})-|\lip{\varphi_1}|^2(\gamma_{0})}{2},\quad \Pi_{1} \text{-a.e.  }\gamma .
\end{equation}
Multiplying by $\epsilon^{-1}>0$  both sides of \eqref{eq:well-4} yields
\[
 \limstz  \frac{ \varphi_2(\gamma_{t})-\varphi_2(\gamma_{0})}{t} \leq \frac{|\lip{\varphi_1+\epsilon \varphi_2}|^2(\gamma_{0})-|\lip{\varphi_1}|^2(\gamma_{0})}{2\epsilon},\quad \Pi_{1} \text{-a.e.  }\gamma .
\]
Letting $\epsilon \downarrow 0$ and using Lemma \ref{prop:well-1},  we infer
\[
\limstz  \frac{ \varphi_2(\gamma_{t})-\varphi_2(\gamma_{0})}{t} \leq \la \nabla \varphi_2, \nabla \varphi_1\ra(\gamma_{0}), \quad \Pi_{1} \text{-a.e.  }\gamma .
\]
Following verbatim the same arguments after \eqref{eq:well-2}, but now for  $\epsilon <0$,  gives
\[
\limitz  \frac{ \varphi_2(\gamma_{t})-\varphi_2(\gamma_{0})}{t} \geq \la \nabla \varphi_2, \nabla \varphi_1\ra(\gamma_{0}), \quad \Pi_{1} \text{-a.e.  }\gamma .
\]
 Since from \cite[Theorem 3.4]{GRS-O} we can find $\Gamma_{i}\subset \geo(X)$ with $\Pi_{i}(\Gamma_{i})=1$, such that  for  $\eta$-a.e. $x$ there exists unique geodesics $\gamma^{x,i}\in \Gamma_{i}$ with $\gamma^{x,i}_{0}=x$, it follows that 
\begin{equation}\label{eq:pfOTCurveFinal}
\lim_{t\downarrow 0}  \frac{ \varphi_2(\gamma^{x,1}_{t})-\varphi_2(\gamma^{x,1}_{0})}{t}=  \la \nabla \varphi_2, \nabla \varphi_1\ra(x), \quad \eta\text{-a.e. }x. 
\end{equation}
Recalling from  \eqref{eq:well-2}   that $\Pi_{2}$-a.e. $\gamma$ represents the gradient of $ \varphi_{2}$ at $\gamma_{0}$, we get that for $\eta$-a.e. $x$ the geodesic $\gamma^{x,2}$ represents the gradient of $\varphi_{2}$ at $x$. Therefore \eqref{eq:pfOTCurveFinal} proves that for $\eta$-a.e. $x$ the angle $\angle \gamma^{x,1} x \gamma^{x,2}$ exists according to Definition \ref{def:angle-1} and coincides with  $\arccos\big(\la \nabla \varphi_2, \nabla \varphi_1\ra(x)\big)$. With the same arguments, just exchanging $i=1$ with $i=2$, we get that also  $\angle \gamma^{x,2} x \gamma^{x,1}$ exists for $\eta$-a.e. $x$, and that the identities \eqref{eq:well-0} hold.
\end{proof}

\section{The cosine formula for angles in $\rcdkn$ spaces}
The goal of this section is to prove Theorem \ref{thm:cosine}, stating that the cosine formula holds for the angle between two geodesics in an $\rcdkn$ space.
The first lemma states  the almost everywhere uniqueness and extendability  of geodesics in $\rcdkn$ spaces; this fact is already present in the literature under slightly different formulations so we just briefly sketch the proof. 

\begin{lemma}\label{lem:ExUnGeo}
Let $\ms$ be an $\rcdkn$ space for some $K \in \R, N\in (1,\infty)$, and fix $p,q \in X$. Then for $\mm$-a.e. $x$ there exist unique geodesics $\gamma^{xp},\gamma^{xq} \in \geo(X)$ such that
\begin{itemize}
\item  $\gamma^{xp}_{0}=\gamma^{xq}_{0}=x$, $\gamma^{xp}_{1}=p$, $\gamma^{xq}_{1}=q$,
\item both $\gamma^{xp}$ and $\gamma^{xq}$ are extendable to  geodesics  $\tilde{\gamma}^{xp}$ and $\tilde{\gamma}^{xq}$  having $x$ as interior point; in other words there exist $\tilde{\gamma}^{xp},\tilde{\gamma}^{xq} \in \geo(X)$ and $\bar{t}\in (0,1)$ such that  $\tilde{\gamma}^{xp}_{\bar{t}},\tilde{\gamma}^{xq}_{\bar{t}}=x$ and $\tilde{\gamma}^{xp}_{[\bar{t},1]}= \gamma^{xp}_{[0,1]},\tilde{\gamma}^{xq}_{[\bar{t},1]}=\gamma^{xq}_{[0,1]}.$ 
 \end{itemize}
\end{lemma}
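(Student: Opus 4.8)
The plan is to prove the two bullet points separately for the fixed point $p$ (the argument for $q$ being identical), and then to obtain the asserted full-measure set of good $x$ by intersecting the four co-null sets associated to $p$ and $q$. For the uniqueness I would rely on the uniqueness of optimal dynamical plans from an absolutely continuous measure, and for the extendability on the one-dimensional localization of the space along transport rays; the latter is where the real content lies.

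\emph{Uniqueness.} First I would show that $B_{p}:=\{x\in X:\ \text{there exist at least two distinct geodesics from } x \text{ to } p\}$ is $\mm$-negligible. Arguing by contradiction, if $\mm(B_{p})>0$ then, since $\mm$ is finite on bounded sets, one finds a bounded Borel set $B'\subset B_{p}$ with $0<\mm(B')<\infty$; put $\mu_{0}:=\mm(B')^{-1}\,\mm\llcorner B'$, so that $\mu_{0}\ll\mm$ and $\mu_{0},\delta_{p}\in\mathcal{P}_{2}(X)$. By a measurable selection argument one can pick two Borel maps $x\mapsto\gamma^{1}_{x},\gamma^{2}_{x}\in\geo(X)$ with $\gamma^{i}_{x,0}=x$, $\gamma^{i}_{x,1}=p$ and $\gamma^{1}_{x}\neq\gamma^{2}_{x}$ for $\mu_{0}$-a.e. $x$. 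Since the only coupling from $\mu_{0}$ to $\delta_{p}$ is $\mu_{0}\otimes\delta_{p}$, both push-forwards $\Pi_{i}:=(x\mapsto\gamma^{i}_{x})_{\sharp}\mu_{0}$ lie in $\opt\geo(\mu_{0},\delta_{p})$, while $\Pi_{1}\neq\Pi_{2}$; this contradicts the uniqueness of the optimal dynamical plan from an absolutely continuous measure in an $\rcdkn$ space \cite{GRS-O} (see also \cite{CM-O}). Hence $\mm(B_{p})=0$, so for $\mm$-a.e. $x$ the geodesic $\gamma^{xp}$ is unique.

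\emph{Extendability.} For the second bullet I would apply the one-dimensional localization of $\ms$ with respect to the $1$-Lipschitz function $u:=\d(p,\cdot)$. Since $\ms$ is essentially non-branching and satisfies $\mathrm{CD}^{*}(K,N)$, the localization theorem \cite{CM-O} disintegrates $\mm$ (restricted to the transport set, which is co-null because every $x\neq p$ lies on a geodesic to $p$, along which $u$ has unit slope) into conditional measures carried by pairwise disjoint transport rays; each ray is a maximal geodesic on which $u$ is affine with unit slope, and each conditional measure is absolutely continuous with respect to $\mathcal{H}^{1}$ on its ray, with a $\mathrm{CD}(K,N)$ density. A short triangle-inequality check shows that the maximality of a ray forces its downstream endpoint to be exactly $p$: if it ended at $b\neq p$, prolonging along a geodesic from $b$ to $p$ would still be a geodesic and would still have unit $u$-slope, contradicting maximality. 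Thus the ray through $x$ is a geodesic from its upstream endpoint through $x$ to $p$, and by the uniqueness proved above its portion from $x$ to $p$ coincides with $\gamma^{xp}$.

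It then remains to observe that $\mm$-a.e. point is \emph{interior} to its own ray: being non-atomic, each conditional density assigns zero mass to the two endpoints of its ray, so integrating over the quotient shows that the set of upstream endpoints is $\mm$-null. Consequently, for $\mm$-a.e. $x$ the ray through $x$ extends strictly beyond $x$ on the side opposite to $p$, which furnishes the required $\tilde{\gamma}^{xp}\in\geo(X)$ and $\bar t\in(0,1)$ with $\tilde{\gamma}^{xp}_{\bar t}=x$ and $\tilde{\gamma}^{xp}_{[\bar t,1]}=\gamma^{xp}_{[0,1]}$. Applying the same reasoning to $q$ yields the analogous statement for $\gamma^{xq}$, and intersecting the four full-measure sets completes the proof. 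I expect the extendability to be the main obstacle: the delicate point is to produce a genuine backward geodesic extension in $X$ (rather than merely a direction in the tangent cone), and this is precisely what the transport-ray decomposition delivers through the non-atomicity of the conditional measures at the ray endpoints.
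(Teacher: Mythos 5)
Your proposal is correct, and it splits exactly along the paper's two steps, but with a genuinely different mechanism for the extendability part. For uniqueness you and the paper do essentially the same thing: both reduce to the uniqueness of the optimal dynamical plan from an absolutely continuous measure to $\delta_{p}$ established in \cite{GRS-O} (the paper simply invokes \cite[Theorem 3.5]{GRS-O}, while you spell out the measurable-selection contradiction; the only point you leave implicit is why $\gamma^{1}_{x}\neq\gamma^{2}_{x}$ on a positive measure set forces $\Pi_{1}\neq\Pi_{2}$ --- since both plans are sections of ${\rm e}_{0}$, uniqueness of disintegration over ${\rm e}_{0}$ gives this, or one can note that $\tfrac12(\Pi_{1}+\Pi_{2})$ would be an optimal dynamical plan not induced by a map). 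For extendability the paper's Step 1 follows verbatim the interpolation argument of \cite[Lemma 3.1]{GMR-E}: transport $\frac{1}{\mm(B_{R}(p))}\,\mm\llcorner B_{R}(p)$ to $\delta_{p}$ along the $\ws$-geodesic and use Jensen's inequality together with the convexity of the dimensional entropy granted by the curvature condition, to conclude that $\mm$-a.e.\ $x\in B_{R}(p)$ is an interior point of a geodesic ending at $p$. You instead run the one-dimensional ray decomposition associated with $u=\d(p,\cdot)$: maximality forces every ray to terminate at $p$ (your triangle-inequality check is right), absolute continuity of the conditional measures makes them non-atomic, hence the set of upstream ray endpoints is $\mm$-null and a.e.\ point is interior to its ray. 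Both routes are sound. The paper's argument is the lighter one: it needs only the defining entropy convexity of the ${\rm CD}$ condition plus Jensen, with no disintegration machinery. Yours invokes the full localization theorem (disjointness of rays up to a null set, disintegration with $\mathcal{H}^{1}$-absolutely continuous conditionals), which is a much heavier black box --- and note that \cite{CM-O}, as it appears in this paper's bibliography, is the optimal-maps paper, not the localization/needle-decomposition theorem, so your citation would need adjusting, and the measurability of the endpoint set should be referred to that literature as well --- but it buys more: it exhibits the maximal backward extension of $\gamma^{xp}$ through $x$ (not just some extension), and the whole scheme works under essentially non-branching ${\rm MCP}(K,N)$, i.e.\ under strictly weaker assumptions than $\rcdkn$, since under ${\rm MCP}$ the uniqueness step is also covered by \cite{CM-O}.
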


\begin{proof}
\textbf{Step 1}.  $ \forall p \in X$, $\mm$-a.e. $x\in X$ is an interior point of a geodesic with end point at $p$.
\\Fix $p \in X$ and $R>0$. Consider
 $$\mu_{0}:= \frac{1}{\mm(B_{R}(p))}  \; \mm \llcorner B_{R}(p) \quad \text{ and } \quad \mu_{1}:=\delta_{p}. $$ 
 Analyzing the optimal transport from $\mu_{0}$ to $\mu_{1}$ by following verbatim the proof of  \cite[Lemma 3.1]{GMR-E} (i.e. use Jensen's inequality and the convexity property of the entropy granted by the curvature condition), we get that for $\mm$-a.e. $x \in B_{R}(0)$ there exists a geodesic $\gamma \in \geo(X)$ such that $\gamma_{1}=p$ and $\gamma_{t}=x$, for some $t \in (0,1)$. The claim then follows by the arbitrariness of $R>0$. 
 \\
 
 \textbf{Step 2}. $ \forall p \in X$, $\mm$-a.e. $x\in X$ there exists a unique geodesic from $x$ to $p$.
 \\The uniqueness of  geodesics connecting a fixed $p \in X$ and $\mm$-a.e. $x \in X$ is a consequence of \cite[Theorem 3.5]{GRS-O} applied to the optimal transportation from the measures $\mu_{0}$, $\mu_{1}$ above.
 
 \textbf{Step 3}. Applying steps 1 and 2 to $p$ and $q$, since the union of two negligible sets is still negligible, the thesis follows.
 \end{proof}

The next lemma will be useful to get good estimates on harmonic approximations of distance functions.

\begin{lemma}\label{lemma:goodfunction}
Let  $B$ be a unit ball in an $\rcdkn$ metric measure space $\ms$, $K \in \R, N\in (1,\infty)$. Then  there exists a  function $G:B\to \R$ with  $G \in {\rm D}({\bf \Delta},B)$ such that 
\[
{\bf \Delta}G \llcorner B=(\Delta G \llcorner B) \, \mm \llcorner B, \quad \Delta G \llcorner B = 1, \quad 0\leq G\leq C \quad \text{on B},
\]
where $C=C(K,N)>0$ is a constant which  depends only on $K$ and  $N$.  
\end{lemma}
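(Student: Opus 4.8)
The plan is to obtain $G$ as a vertical translate of the solution of the Poisson problem $\Delta u = 1$ with zero Dirichlet boundary data, solved not on $B$ itself but on the concentric ball of twice the radius, so that interior estimates control $u$ on all of $B$. Write $B = B_{1}(x_{0})$ and $B_{2} = B_{2}(x_{0})$; since the statement is meant to be applied with the unit ball sitting inside a much larger space, I may and will assume $\mm(X\setminus B_{2})>0$ (this is in any case necessary for $\Delta G=1$ to be solvable with bounded $G$). First I would produce $u \in W^{1,2}_{0}(B_{2})$ with ${\bf \Delta} u \llcorner B_{2} = \mm \llcorner B_{2}$ by the direct method of the calculus of variations: minimize the strictly convex functional
\[
J(v) := \frac12 \int_{B_{2}} |\D v|^2 \,\d\mm + \int_{B_{2}} v \,\d\mm, \qquad v \in W^{1,2}_{0}(B_{2}),
\]
over the Hilbert space $W^{1,2}_{0}(B_{2})$. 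Coercivity follows from a Friedrichs-type inequality $\|v\|_{L^2(B_{2})} \le C \, \||\D v|\|_{L^2(B_{2})}$ valid for $v \in W^{1,2}_{0}(B_{2})$, itself a consequence of the local Poincar\'e inequality \cite{R-L} together with the fact that $v$ extended by $0$ vanishes on a set of positive measure. Hence the minimizer $u$ exists; testing its Euler--Lagrange equation against any $\varphi\in \mathrm{LIP}(X)$ with compact support in $B_{2}$ gives $-\int \la \nabla \varphi, \nabla u\ra \,\d\mm = \int \varphi\,\d\mm$, i.e.\ $u \in {\rm D}({\bf \Delta}, B_{2})$ with ${\bf \Delta} u = \mm$. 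In particular ${\bf \Delta} u$ is absolutely continuous with density $\Delta u = 1$.

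It then remains to bound $u$ uniformly on $B$ with a constant depending only on $K$ and $N$. The upper bound $u \le 0$ is immediate from the Comparison principle of Proposition \ref{prop-harmonic}, applied with $f = u \in W^{1,2}_{0}(B_{2})$ and $g \equiv 0$, since ${\bf \Delta} u = \mm \ge 0$. For the lower bound I would first record the a priori energy estimate obtained by testing with $v = u$, namely $\int_{B_{2}} |\D u|^2 \,\d\mm = -\int_{B_{2}} u \,\d\mm \le \mm(B_{2})^{1/2} \|u\|_{L^2(B_{2})}$, which combined with the Friedrichs inequality yields $\||\D u|\|_{L^2(B_{2})} + \|u\|_{L^2(B_{2})} \le C\, \mm(B_{2})^{1/2}$, hence $\big(\tfrac{1}{\mm(B_{2})}\int_{B_{2}} |u|^2\,\d\mm\big)^{1/2} \le C(K,N)$. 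Then, since $u$ solves a linear equation with bounded right-hand side, the De Giorgi--Nash--Moser local boundedness theory on doubling spaces supporting a Poincar\'e inequality (see \cite{BB-N}) gives
\[
\mathrm{ess~sup}_{B_{1}} |u| \le C\left[ \Big(\tfrac{1}{\mm(B_{2})}\int_{B_{2}} |u|^2 \,\d\mm\Big)^{1/2} + \|\Delta u\|_{L^\infty(B_{2})} \right] \le C(K,N),
\]
where the doubling and Poincar\'e constants on balls of radius $\le 2$ in an $\rcdkn$ space depend only on $K$ and $N$ (by Bishop--Gromov volume comparison \cite{BS-L} and \cite{R-L}), so the resulting constant is of the asserted form.

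Finally I would set $C := \mathrm{ess~sup}_{B_{1}}|u|$ and define $G := u + C$ on $B = B_{1}$. Restricting to the open set $B\subset B_{2}$, testing against Lipschitz functions with compact support in $B$ shows $u\restr{B}\in {\rm D}({\bf\Delta},B)$ with ${\bf\Delta}u\llcorner B = \mm\llcorner B$; adding a constant does not affect the Laplacian, so $G \in {\rm D}({\bf \Delta}, B)$ with ${\bf \Delta} G\llcorner B = \mm \llcorner B$, i.e.\ $\Delta G \llcorner B = 1$. Moreover $-C \le u \le 0$ on $B_{1}$ gives $0 \le G \le C$ on $B$, as required.

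The main obstacle is the lower bound on $u$. The upper bound, and more generally every comparison against distance (or distance-squared) functions, is soft: it rests on the Laplacian comparison Theorem \ref{thm:LapComp}, which bounds the Laplacians of distance functions only from above, and therefore can only furnish one-sided (upper) barriers for $u$. There is no comparably elementary barrier bounding $u$ from below, which is precisely why one must invoke the quantitative local boundedness (De Giorgi--Nash--Moser) theory, together with the crucial fact that its constants are controlled purely in terms of $K$ and $N$ on balls of bounded radius.
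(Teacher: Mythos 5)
Your construction is genuinely different in architecture from the paper's, although both proofs ultimately rest on the same deep analytic ingredient. The paper solves the Dirichlet problem directly on $B$ by quoting Biroli--Mosco: \cite[Corollary 1.2]{BM-S} produces $u_f\in W^{1,2}_0(B)$ with $\Delta u_f=f$, and \cite[Theorem 4.1]{BM-S} gives the two-sided bound $\sup_B|u_f|\le c\,\mm(B)^{-1/p}\|f\|_{L^p(B)}$ with $c$ depending only on the doubling and Poincar\'e constants, hence only on $K,N$; taking $f\equiv 1$ and $G:=u_f+c$ finishes. You instead build the solution by the direct method on the doubled ball $B_2$, obtain the upper bound $u\le 0$ from the comparison principle of Proposition \ref{prop-harmonic}, and obtain the lower bound from an energy estimate plus interior De Giorgi--Nash--Moser local boundedness. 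What your localization buys is that you never need a global (up-to-the-boundary) sup estimate on the solution domain, only interior estimates on $B_1\subset\subset B_2$; what it costs is that the quantitative core (local boundedness with constants depending only on doubling and Poincar\'e, for an equation with right-hand side) must still be cited, and the natural reference for exactly that statement in this setting is again \cite{BM-S}, i.e.\ the paper's black box, rather than the quasiminimizer theory emphasized in \cite{BB-N}. Your closing remark is correct and worth keeping: Theorem \ref{thm:LapComp} only yields upper barriers, so the lower bound genuinely requires this quantitative elliptic theory.

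There is, however, one genuine quantitative gap. Your chain of estimates needs the Friedrichs constant $C_F$ in $\|v\|_{L^2(B_2)}\le C_F\,\||\D v|\|_{L^2(B_2)}$, $v\in W^{1,2}_0(B_2)$, to depend only on $K$ and $N$, but you justify it only by ``$v$ vanishes on a set of positive measure'', having assumed merely $\mm(X\setminus B_2)>0$. That is not enough: take for $X$ a closed $\rcdkn$ space of diameter $2+\varepsilon$ (e.g.\ a round sphere, which is ${\rm RCD}^*(0,N)$ for every $\varepsilon$). Then $W^{1,2}_0(B_2)$ is constrained only near a cap whose capacity tends to $0$ as $\varepsilon\downarrow 0$, so $C_F$ blows up; correspondingly the torsion-type function $u$ itself blows up, since the fixed Laplacian mass $\mm(B_2)$ must exit through an arbitrarily small region (one sees this rigorously by testing the variational problem with $t(1-h_\varepsilon)$, $h_\varepsilon$ a capacitary potential of the cap). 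Hence under your stated hypothesis the normalized $L^2$ bound on $u$, and with it your final constant, is not of the form $C(K,N)$. The fix is short and consistent with the intended application (in Proposition \ref{lemma-harmonicapprox} the lemma is used in blow-ups where $\d(x_0,p)\to\infty$): assume, say, $X\setminus B_5(x_0)\neq\emptyset$; then since $X$ is a length space there is $y$ with $\d(x_0,y)=3$, so $B_1(y)\subset B_4(x_0)\setminus B_2(x_0)$, and local doubling (with constant $C(K,N)$ on balls of radius at most $8$) gives $\mm(B_4\setminus B_2)\ge c(K,N)\,\mm(B_4)$; this quantitative fatness of the complement upgrades the Poincar\'e inequality of \cite{R-L} on $B_4$ to the Friedrichs inequality you need, with $C_F=C_F(K,N)$. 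To be fair, the lemma as stated in the paper carries the same implicit largeness requirement (for $B=X$ the equation $\Delta G=1$ with bounded $G$ is not even solvable), so the right move is to make this extra hypothesis explicit rather than to try to avoid it.
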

\begin{proof}
Since  $\ms$ is a $\rcdkn$ metric measure space, it satisfies  a local (1-2)-Poincar\'e inequality and it is locally doubling.  It is also  known \cite[Remark 6.9 and Theorem 6.10]{AGS-M} that the metric $\d$ is induced by the Dirichlet form $f\mapsto \int |\D f|^2\,\d \mm$. Therefore the standing assumptions of \cite{BM-S} are fulfilled and    from \cite[Corollary 1.2] {BM-S} we know that for any $f\in L^p(B,\mm)$, $p> 2$ , there exists a function $u_f \in W^{1,2}_0(B)$ such that
\[
\int_{B} \la \nabla u_f,  \nabla v \ra\,\d \mm=\int_B fv\,\d \mm
\]
for any $v \in W^{1,2}_0(B)$. In other words, we know $u_f \in {\rm D}({\bf \Delta}, B)$ and 
\[
\Delta u_f= f \quad \mm\text{-a.e. }.
\]
Furthermore, from \cite[Theorem 4.1]{BM-S} we know 
\[
\mathop{\sup}_{B} |u_f| \leq c \,\mm(B)^{-\frac1p} \| f\|_{L^p(B, \mm)}
\]
where $c$ only depends on  the constants  in  the Poincar\'e inequality and in the  doubling condition.  In our case, $c$ only depends on $N$ and $K$.
\\Now, choosing $f= 1$ on $B$, we  get that  $G:=u_f+ c$ satisfies the thesis with $C=2c$.
\end{proof}

Using Lemma \ref{lemma:goodfunction}, in the next proposition we prove a key estimate in order to establish the cosine formula for angles.

\begin{proposition}\label{lemma-harmonicapprox}
Let $\ms$ be an $\rcdkn$  metric measure space, for some $K \in \R, N\in (1,\infty)$, and fix $x_{0}\in X$.  Let $R\geq 2$,   $p, \hat{p} \in X$ such that $\d(x_{0},p)+\d(x_0, \hat{p})=\d(p, \hat{p})$,  and $\d(x_{0},p), \d(x_0, \hat{p}) \geq R$.  We denote $b_{p}(\cdot):=\d(p,\cdot)-\d(p,x_{0})$ and $b_{\hat{p}}(\cdot):=\d(\hat{p},\cdot)-\d(\hat{p},x_{0})$. Assume that there exists a function $\Phi(R|K,N)$ satisfying  $\lim_{R\to +\infty}\Phi(R|K,N)=0$ for fixed $K,N$, such that $0\leq { b}_{\hat{p}}(x)+{ b}_{{p}}(x)\leq \Phi(R|K,N)$ for any $x\in  B_{1}(x_{0})$.

 Then there exists a harmonic approximation ${\bf b}_{p}$ of $b_{p}$ with  the following properties:
\begin{enumerate}
\item ${\bf b}_{p}-b_{p} \in W^{1,2}_{0}( B_{1}(x_{0}))$,  ${\bf b}_{p} \in  {\rm D}({\bf \Delta}, B_{1}(x_{0}))$ with ${\Delta} {\bf b}_{p} \llcorner  B_{1}(x_{0})=0$,
\item  it holds 
\begin{equation}\label{eq:HarmApprox}
\|{\bf b}_{p}- b_{p}\|_{L^\infty(B_1(x_{0}))}+ \frac{1}{\mm(B_{1}(x_{0}))} \int_{B_{1}(x_{0})}|\D( {\bf b}_{p}-  b_{p})| ^{2} \, \d \mm \leq \Psi(R| K, N),
\end{equation}
 where $\Psi:\R^{3}\to \R_{>0}$ satisfies  $\lim_{R\to +\infty}\Psi(R|K,N)=0$ for fixed $K,N$.
\end{enumerate}
\end{proposition}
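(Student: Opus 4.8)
The plan is to realize ${\bf b}_p$ as the harmonic replacement of $b_p$ on $B_1(x_0)$ and then to quantify its distance to $b_p$ by playing the one-sided Laplacian bounds of Theorem \ref{thm:LapComp} against the excess hypothesis $0\le b_p+b_{\hat p}\le\Phi(R|K,N)$. First I would fix a cut-off $\chi$ supported in a slightly larger ball with $\chi\equiv1$ on $B_1(x_0)$ and apply Proposition \ref{prop-harmonic}(iii) to $\chi b_p\in W^{1,2}(X)$: this produces a unique harmonic ${\bf b}_p$ on $B_1(x_0)$ with ${\bf b}_p-b_p\in W^{1,2}_0(B_1(x_0))$, which is exactly property (1). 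Since $\d(p,\cdot)\ge R-1$ on $B_1(x_0)$, Theorem \ref{thm:LapComp} gives ${\bf\Delta}b_p\llcorner B_1(x_0)\le\lambda_p\,\mm$ and ${\bf\Delta}b_{\hat p}\llcorner B_1(x_0)\le\lambda_{\hat p}\,\mm$, where $\lambda_p,\lambda_{\hat p}\le\Lambda(R|K,N)$ for a constant that stays bounded (and, for $K=0$, tends to $0$) as $R\to\infty$; these are the only way the curvature enters the estimates.

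For the $L^\infty$ bound I would first handle the excess $e:=b_p+b_{\hat p}$. By linearity of harmonic replacement, ${\bf e}:={\bf b}_p+{\bf b}_{\hat p}$ is the harmonic replacement of $e$; being harmonic it obeys both the maximum and the minimum principle (Proposition \ref{prop-harmonic}(iv)), so $0\le{\bf e}\le\Phi(R|K,N)$ and hence $\|{\bf e}-e\|_{L^\infty(B_1(x_0))}\le\Phi(R|K,N)$. The point to keep in mind is that this does \emph{not} localize to $b_p$ alone, since $b_p$ itself oscillates by an amount of order $1$ on $B_1(x_0)$: using the good function $G$ of Lemma \ref{lemma:goodfunction}, from ${\bf\Delta}({\bf b}_p-b_p)=-{\bf\Delta}b_p\ge-\Lambda\,{\bf\Delta}G$ and Proposition \ref{prop-harmonic}(ii) with $f={\bf b}_p-b_p\in W^{1,2}_0$, $g=\Lambda G$, I only get the one-sided control ${\bf b}_p-b_p\le 2\Lambda C$, and the matching lower bound would have to be recovered from the identity ${\bf b}_p-b_p=({\bf e}-e)-({\bf b}_{\hat p}-b_{\hat p})$ together with the symmetric estimate for $\hat p$.

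For the gradient estimate the soft core is that the excess has small energy. Testing the defining relation of ${\bf\Delta}e$ with $\chi^2 e\ge0$ and using ${\bf\Delta}e\le\Lambda\,\mm={\bf\Delta}(\Lambda G)$ to turn $\Lambda G-e$ into a subharmonic function, I would bound $\int_{B_1(x_0)}|\D e|^2\,\d\mm$ by $C(K,N)\,\Phi(R|K,N)$, the mass of the (now sign-definite, after the $G$-correction) Laplacian on the enlarged ball being controlled by a cut-off integration by parts. Combining this with the harmonicity identity
\[
\int_{B_1(x_0)}|\D({\bf b}_p-b_p)|^2\,\d\mm=\int_{B_1(x_0)}({\bf b}_p-b_p)\,\d({\bf\Delta}b_p)
\]
and the $L^\infty$ information of the previous step would give the second half of \eqref{eq:HarmApprox} after dividing by $\mm(B_1(x_0))$, which is bounded below by local doubling.

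The main obstacle is exactly the passage from the excess $e$ to the single function $b_p$. The Laplacian comparison only yields one-sided bounds, and for $K<0$ the constant $\Lambda(R|K,N)$ does not vanish as $R\to\infty$, so neither the maximum principle nor the integration by parts above, applied to $b_p$ alone, can by themselves produce a quantity tending to $0$; the smallness must be forced by $e\le\Phi\to0$, which is the quantitative avatar of the splitting phenomenon. I expect to close this gap by a compactness/stability argument: if the conclusion failed along some $R_j\to\infty$ with $\Phi(R_j|K,N)\to0$, then the normalized distance functions $b_{p_j},b_{\hat p_j}$ would subconverge to Busemann functions of a limiting line whose excess vanishes on $B_1(x_0)$, hence harmonic there, while the stability of the Laplacian and of $W^{1,2}$ under the relevant convergence (Proposition \ref{prop-1order-converge}) would force ${\bf b}_{p_j}$ to converge to the same limit as $b_{p_j}$, contradicting the quantitative failure. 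This is the step where the rigidity input (the splitting theorem, equivalently the Bochner inequality available on $\rcdkn$ spaces) seems unavoidable.
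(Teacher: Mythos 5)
Your main line is, step for step, the paper's own proof: harmonic replacement of $b_p$ via Proposition \ref{prop-harmonic}, the one-sided bound ${\bf b}_p-b_p\le 2C\Lambda$ from the comparison principle played against the auxiliary function $G$ of Lemma \ref{lemma:goodfunction}, control of the excess $e=b_p+b_{\hat p}$ and of its harmonic replacement ${\bf e}={\bf b}_p+{\bf b}_{\hat p}$ by the maximum principle, recovery of the missing sign from the identity $b_p-{\bf b}_p=(e-{\bf e})+({\bf b}_{\hat p}-b_{\hat p})$, and the energy bound from $\int_{B_1(x_0)}|\D({\bf b}_p-b_p)|^2\,\d\mm=\int_{B_1(x_0)}({\bf b}_p-b_p)\,\d({\bf\Delta}b_p)$ (the paper makes the right-hand side sign-definite by adding the constant $\|b_p-{\bf b}_p\|_{L^\infty}$ and using $\int_{B_1(x_0)}\d\bigl({\bf\Delta}({\bf b}_p-b_p)\bigr)=0$, rather than your $G$-correction; this is cosmetic). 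The one real divergence is the endgame: the paper has \emph{no} compactness step at all. It closes the argument by asserting, from Theorem \ref{thm:LapComp}, that ${\bf\Delta}b_p\llcorner B_1(x_0)\le \Psi(R|K,N)\,\mm$ with $\Psi(R|K,N)\to 0$ as $R\to\infty$ for fixed $K$, after which every estimate you outlined is quantitative and \eqref{eq:HarmApprox} follows.

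Here is the genuine issue, and it cuts both ways. Your suspicion about $K<0$ is correct: for fixed $K<0$ the comparison bound $\frac{N\tilde\sigma_{K,N}(\theta)-1}{\theta}$ tends to $\sqrt{-KN}>0$ as $\theta\to\infty$, so the paper's claimed $\Psi(R|K,N)\to0$ is false for fixed negative $K$, and the direct estimates only yield $\|{\bf b}_p-b_p\|_{L^\infty}\lesssim \Phi+C(K,N)\sqrt{-KN}$. However, your proposed repair does not close this gap either. Along a contradicting sequence with $K<0$ \emph{fixed}, the limit functions $b_\infty,\hat b_\infty$ satisfy $b_\infty+\hat b_\infty=0$ on $B_1$ together with the two one-sided bounds ${\bf\Delta}b_\infty\le\sqrt{-KN}\,\mm$, ${\bf\Delta}\hat b_\infty\le\sqrt{-KN}\,\mm$; this gives only $|{\bf\Delta}b_\infty|\le\sqrt{-KN}$, \emph{not} harmonicity. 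The Cheeger--Gromoll cancellation trick needs both upper bounds to vanish in the limit, and there is no splitting theorem (nor Bochner rigidity) for a fixed negative lower Ricci bound — so your compactness step assumes precisely the smallness that is in question. The honest resolution is to reparametrize the statement: what the argument (yours and the paper's) actually proves is $\Psi\le C(N)\bigl(\sqrt{|K|}+\tfrac1{R-1}+\Phi\bigr)$, using $s\coth s\le 1+s$, i.e.\ smallness \emph{jointly} in $|K|$ and $1/R$, exactly as in Cheeger--Colding where the Ricci lower bound is $-\delta$ with $\delta\to0$. This suffices for the only application, Theorem \ref{thm:cosine}, because there the rescaled spaces are ${\rm RCD}^*(r_i^2K,N)$, so the curvature parameter tends to $0$ together with $1/R_i$; with that reading your direct argument closes and the compactness/splitting step should simply be deleted.
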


\begin{proof}
From  Proposition \ref{prop-harmonic} we know there exists  $ {\bf b}_{p}$ satisfying \emph{(1)} of the thesis. Similarly, we can find a harmonic approximation $ {\bf b}_{\hat{p}}$ 
of $b_{\hat{p}}$.
\\ We are then left to show the validity of the estimate \eqref{eq:HarmApprox}. To this aim, let $G:B_{1}(x_{0})\to \R_{\geq 0}$  be given by Lemma  \ref{lemma:goodfunction}, so that  
\[
{\bf \Delta}G \llcorner B_{1}(x_{0})=(\Delta G \llcorner B_{1}(x_{0})) \, \mm  \llcorner B_{1}(x_{0}) , \quad \Delta G \llcorner B_{1}(x_{0}) = 1, \quad 0\leq G\leq C \quad \text{on }B_{1}(x_{0}),
\]
where $C(K,N)$ depends only on $K,N$ and in particular is  independent of $R$.
\\From Laplacian Comparison Theorem \ref{thm:LapComp}   we know  that  ${b}_{p}, {b}_{\hat{p}}\in {\rm D}({\bf \Delta}, B_{1}(x_{0}))$ and 
\begin{equation}\label{eq:compDeltabp}
{\bf \Delta} {b}_{p} \llcorner B_{1}(x_{0}) \leq \Psi(R|K,N) \, \mm
\end{equation}
and
\begin{equation}\label{eq:compDeltabp2}
{\bf \Delta} {b}_{\hat{p}} \llcorner B_{1}(x_{0}) \leq \Psi(R|K,N) \, \mm
\end{equation}
for some suitable  $\Psi:\R^{3}\to \R_{>0}$ satisfying $\lim_{R\to +\infty}\Psi(R|K,N)=0$ for fixed $K,N$. Then we have 
\begin{align*}
\Delta ({ b}_{p}-{ \bf b}_{p}-\Psi G) \llcorner B_{1}(x_{0})\leq 0, \quad  \Delta (-{ b}_{\hat{p}}+{ \bf b}_{\hat{p}}+\Psi G)  \llcorner B_{1}(x_{0})\geq 0.
\end{align*}
Applying the comparison statement of  Proposition \ref{prop-harmonic} to  $(-{ b}_{p}+{ \bf b}_{p})+\Psi G$ we get that 
\begin{equation}\label{eq:bpinf1}
-{ b}_{p}+{ \bf b}_{p} \leq 2 \Psi \sup_{B_{1}(x_{0})} |G| \leq 2 C(K,N) \Psi, \quad \mm\text{-a.e. on }B_{1}(x_{0}). 
\end{equation}
Analogously, applying  the comparison statement of  Proposition \ref{prop-harmonic} to  $-{ b}_{\hat{p}}+{ \bf b}_{\hat{p}}+\Psi G$ we get 
\begin{equation*}
-{ b}_{\hat{p}}+{ \bf b}_{\hat{p}} \leq  2 \Psi \sup_{B_{1}(x_{0})} |G| \leq  2 C(K,N) \Psi,  \quad \mm\text{-a.e. on }B_{1}(x_{0}). 
\end{equation*}
By assumption,  we know  there exists a function $\Phi(R|K,N)$ satisfying  $\lim_{R\to +\infty}\Phi(R|K,N)=0$ for fixed $K,N$, such that $0\leq { b}_{\hat{p}}(x)+{ b}_{{p}}(x)\leq \Phi(R|K,N)$ for any $x\in  B_{1}(x_{0})$. Using maximum principle of  Proposition \ref{prop-harmonic}, we know
\begin{equation*}
0\leq {\bf b}_{\hat{p}}(x)+{\bf b}_{{p}}(x)\leq \Phi(R|K,N)
\end{equation*}
for any $x\in  B_{1}(x_{0})$. The combination of the last three estimates gives
\begin{align}\label{eq:otherside}
 b_{p}- {\bf b}_{p}  &= ( b_{p} +b_{\hat{p}})- ({\bf b}_{p} + {\bf b}_{\hat{p}})+ (-{ b}_{\hat{p}}+{ \bf b}_{\hat{p}})   \leq \Phi(R|K,N) + 2 C(K,N) \Psi(R|K,N), \quad \text{on } B_{1}(x_{0}).
\end{align}
Putting together \eqref{eq:bpinf1} and  \eqref{eq:otherside}, we get
\begin{equation}\label{eq:bpinf}
\|{ b}_{p}-{ \bf b}_{p}\|_{L^{\infty}(B_{1}(x_{0}))} \leq  2 \, C(K,N) \, \Psi(R|K,N)+ \Phi(R|K,N). 
\end{equation}
Next, write $B=B_{1}(x_{0})$ for short. Recalling that ${\bf  \Delta b}_{p} \llcorner B =0$,  combining \eqref{eq:compDeltabp} with \eqref{eq:bpinf} and using that $({\bf b}_{p} -b_{p}) \in W^{1,2}_{0}(B)$ in order to integrate by parts,   we obtain
\begin{align}
\int_{B} |\D ({\bf b}_{p} -b_{p})|^2 \,	\d \mm& =- \int_{B}  ({\bf b}_{p} -b_{p})  \, \d \left({\bf \Delta}({\bf b}_{p} -b_{p}) \right) \nonumber  \\
&=  \int_{B}  \big( \|{ b}_{p}-{ \bf b}_{p}\|_{L^{\infty}(B)} + ({\bf b}_{p} -b_{p}) \big)  \, \d \left( {\bf \Delta}  b_{p} \right) + \|{ b}_{p}-{ \bf b}_{p}\|_{L^{\infty}(B)} \int_{B}   \d  \left({\bf  \Delta}({\bf b}_{p} -b_{p}) \right)  \nonumber \\
&\leq   \int_{B}  \big( \|{ b}_{p}-{ \bf b}_{p}\|_{L^{\infty}(B)} + ({\bf b}_{p} -b_{p}) \big)  \, \Psi(R|K,N) \, \d \mm  \nonumber \\
&\leq 2   \|{ b}_{p}-{ \bf b}_{p}\|_{L^{\infty}(B)}  \; \mm(B) \;  \Psi(R|K,N) \nonumber \\
& \leq  2 \big(  2 C(K,N) \, \Psi(R|K,N)+\,\Phi(R|K,N)  \big) \,\mm(B) \, \Psi(R|K,N), \label{eq:gbbL2}
\end{align}
where we used that, since  $({\bf b}_{p} -b_{p}) \in W^{1,2}_{0}(B)$, it holds
$$ \int_{B} \d  \left({\bf \Delta}({\bf b}_{p} -b_{p}) \right)= \int_{B} 1 \; \d  \left({\bf \Delta}({\bf b}_{p} -b_{p}) \right)= -\int_{B}  \la \nabla ({\bf b}_{p} -b_{p}), \nabla 1 \ra \, \d \mm =0. $$
Summing up \eqref{eq:bpinf} and  \eqref{eq:gbbL2} we get  \eqref{eq:HarmApprox} by renaming with  $\Psi(R|K,N)$ the quantity $ 2 \big(  2 C(K,N) \, \Psi(R|K,N)+\,\Phi(R|K,N)  \big)  \, \Psi(R|K,N)+ 2 \, C(K,N) \, \Psi(R|K,N)+ \Phi(R|K,N)$.
\end{proof}

\begin{theorem}\label{thm:cosine}
Let $\ms$ be an $\rcdkn$ space for some $K \in \R, N\in (1,\infty)$, and fix $p,q \in X$. Then for  $\mm$-a.e. $x \in X$  let  $\gamma^{xp},\gamma^{xq} \in \geo(X)$ be the unique geodesics from $x$ to $p$ and from $x$ to $q$ given by Lemma  \ref{lem:ExUnGeo}.
We may also assume that the tangent cone at $x$ is unique and isomorphic as m.m. space to $(\R^{k},\d_{E},  \mathcal{L}_k)$, for some $k=k(x) \in \N \cap [1,N]$.
Let $r_{i}\downarrow 0$ be any sequence, $\bar{p}, \bar{q}\in \R^{k}$ be the limit points of $\gamma^{xp}(r_{i}), \gamma^{xq}(r_{i})$ under the rescalings $(X, r_{i}^{-1} \d, \mm^{x}_{r_{i}}, x)$ which converge to  $(\R^{k},\d_{E},  \mathcal{L}_k, O)$  in p-mGH sense.
Then
\begin{equation}\label{eq::anglepxpgamma}
  \angle \gamma^{xp} x \gamma^{xq}=  \angle p x q=   \angle \bar{p}O\bar{q}= \lim_{t \downarrow 0} \arccos \frac{2t^2-\d^2(\gamma^{xp}_{t}, \gamma^{xq}_{t})}{2t^{2}} ,\quad  \text{for  $\mm$-a.e. $x$}.
\end{equation}
\end{theorem}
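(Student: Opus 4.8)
The plan is to fix a ``good'' point $x$ at which everything I need holds simultaneously: $x$ is regular in the sense of Theorem \ref{RCD-reg} (so its tangent is $(\R^{k},\d_E,\mathcal L_k,O)$), the unique extendable geodesics $\gamma^{xp},\gamma^{xq}$ of Lemma \ref{lem:ExUnGeo} exist, the angle $\angle pxq$ exists with $\cos\angle pxq=\la\nabla r_p,\nabla r_q\ra(x)$ by Lemma \ref{prop:well-1}, $x$ is a Lebesgue point of $\la\nabla r_p,\nabla r_q\ra$, and the eikonal identity $|\D r_p|=|\D r_q|=1$ holds near $x$; the intersection of these full-measure sets is again of full measure. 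By the locality of the angle between geodesics (Remark \ref{rem:restrAngleGeo}) I reparametrise $\gamma^{xp},\gamma^{xq}$ by arclength, so that $\d(x,\gamma^{xp}_t)=\d(x,\gamma^{xq}_t)=t$. The first equality $\angle\gamma^{xp}x\gamma^{xq}=\angle pxq$ is then immediate from Theorem \ref{thm:def1def2}, and the whole content becomes the identification of $\cos\angle pxq=\la\nabla r_p,\nabla r_q\ra(x)$ with both $\cos\angle\bar pO\bar q$ and the cosine-formula limit.

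For the cosine-formula limit I would argue by blow-up, using stability of geodesics under p-mGH convergence. Fixing any sequence $r_i\downarrow0$ and rescaling to $(X,r_i^{-1}\d,\mm^{x}_{r_i},x)\to(\R^{k},\d_E,\mathcal L_k,O)$, the arclength geodesics $\gamma^{xp},\gamma^{xq}$, restricted to $[0,r_i]$ and rescaled, converge uniformly to the Euclidean segments $s\mapsto s\bar p$ and $s\mapsto s\bar q$, with $\bar p,\bar q$ unit vectors (by definition the limit points of the rescaled endpoints). Writing $t=r_is$, rescaled distances give $r_i^{-1}\d(\gamma^{xp}_{r_is},\gamma^{xq}_{r_is})\to s|\bar p-\bar q|_E$, whence $\d^2(\gamma^{xp}_t,\gamma^{xq}_t)/t^2\to|\bar p-\bar q|_E^2=2-2\la\bar p,\bar q\ra$. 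Since this value is independent of the sequence once we know $\la\bar p,\bar q\ra=\cos\angle pxq$, the genuine limit exists and
\[
\lim_{t\downarrow0}\frac{2t^2-\d^2(\gamma^{xp}_t,\gamma^{xq}_t)}{2t^2}=\frac{2-|\bar p-\bar q|_E^2}{2}=\la\bar p,\bar q\ra=\cos\angle\bar pO\bar q .
\]
Thus everything reduces to the single identity $\la\nabla r_p,\nabla r_q\ra(x)=\la\bar p,\bar q\ra$.

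This identity is the heart of the proof and is where the harmonic approximations enter. Set $b_p:=r_p-r_p(x)$, $b_q:=r_q-r_q(x)$, and in the $i$-th rescaled space consider $b^i_p:=r_i^{-1}b_p$, which has $|\D b^i_p|\equiv1$ by the eikonal identity and is scale-invariantly paired, $\la\nabla b^i_p,\nabla b^i_q\ra=\la\nabla r_p,\nabla r_q\ra$. Using the extendability of $\gamma^{xp}$ past $x$ (Lemma \ref{lem:ExUnGeo}) I choose an antipode $\hat p$ with $x$ interior to the geodesic from $p$ to $\hat p$; by the Laplacian comparison (Theorem \ref{thm:LapComp}) the rescaled $b^i_p$ is almost harmonic, and the excess $b^i_p+b^i_{\hat p}\ge0$, vanishing at $x$, satisfies a bound $\Phi(R_i|K,N)\to0$ on the unit ball (an Abresch--Gromoll-type estimate via Laplacian comparison and a barrier built from Lemma \ref{lemma:goodfunction}). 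This verifies the hypotheses of Proposition \ref{lemma-harmonicapprox}, producing harmonic approximations ${\bf b}^i_p$ with $\|{\bf b}^i_p-b^i_p\|_{L^\infty}+\frac{1}{\mm(B_1)}\int_{B_1}|\D({\bf b}^i_p-b^i_p)|^2\,\d\mm\to0$. Since the ${\bf b}^i_p$ have vanishing Laplacian and converge in $L^2$ to the uniform limit of $b^i_p$, Proposition \ref{prop-1order-converge} upgrades this to strong $W^{1,2}$ convergence to a limit ${\bf b}^\infty_p$ on $B_1(O)\subset\R^{k}$. From $|\D b^i_p|\equiv1$ and the $W^{1,2}$ closeness the limit is harmonic with $|\D{\bf b}^\infty_p|\equiv1$, hence affine by the rigidity of harmonic functions with constant gradient on $\R^{k}$ (Bochner's formula), and equal to $v\mapsto-\la\bar p,v\ra$ by evaluating along the limiting ray. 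The same holds for $q$, so by polarisation and strong $W^{1,2}$ convergence the averages $\frac1{\mm(B_1)}\int_{B_1}\la\nabla b^i_p,\nabla b^i_q\ra$ converge to $\la\bar p,\bar q\ra$; by scale-invariance these equal $\frac{1}{\mm(B_{r_i}(x))}\int_{B_{r_i}(x)}\la\nabla r_p,\nabla r_q\ra\,\d\mm$, which tends to $\la\nabla r_p,\nabla r_q\ra(x)$ by Lebesgue differentiation at $x$. This yields $\la\nabla r_p,\nabla r_q\ra(x)=\la\bar p,\bar q\ra$ and closes the chain of equalities.

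The main obstacle is precisely this last step: producing strong $W^{1,2}$ (not merely uniform) convergence of the blow-ups of the distance functions and showing that the gradient inner product commutes with the blow-up. Uniform convergence of distance functions is automatic but controls nothing at the level of gradients; the device that rescues the argument is to replace $b_p$ by its harmonic approximation, whose vanishing Laplacian makes Proposition \ref{prop-1order-converge} applicable and forces the limit to be affine on the Euclidean tangent cone. Two subordinate difficulties sit inside this step and deserve care: the verification of the excess bound $\Phi(R_i|K,N)\to0$ needed to invoke Proposition \ref{lemma-harmonicapprox}, and the passage from convergence along a fixed sequence $r_i\downarrow0$ to the genuine limit $\lim_{t\downarrow0}$, which is guaranteed by the uniqueness of the tangent cone at regular points together with the sequence-independence of $\la\bar p,\bar q\ra=\cos\angle pxq$.
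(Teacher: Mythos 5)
Your proposal is correct and follows essentially the same skeleton as the paper's proof: selection of a full-measure set of good points (regular, extendable unique geodesics, Lebesgue point of $\la \nabla r_p,\nabla r_q\ra$, angle identified via Theorem \ref{thm:def1def2} and Lemma \ref{prop:well-1}), blow-up along $r_i\downarrow 0$, harmonic replacement of the rescaled distance functions via Proposition \ref{lemma-harmonicapprox}, upgrade to strong $W^{1,2}$ convergence via Proposition \ref{prop-1order-converge}, identification of $\cos\angle pxq$ with $\la\bar p,\bar q\ra$ through scale invariance and Lebesgue differentiation, and finally the cosine limit from p-GH convergence of the geodesics plus sequence-independence. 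You deviate from the paper in two sub-steps, both in Step 2. First, to verify the excess hypothesis $0\le b^i_p+b^i_{\hat p}\le \Phi$ of Proposition \ref{lemma-harmonicapprox}, you invoke an Abresch--Gromoll-type estimate ``via Laplacian comparison and a barrier from Lemma \ref{lemma:goodfunction}''; such an estimate is indeed available for $\rcdkn$ spaces in the literature (Gigli--Mosconi), but it does not follow in one line from the tools in this paper, whereas the paper gets the same bound softly and self-containedly: $b^i_p+b^i_{\hat p}$ converges uniformly on bounded sets (Arzel\`a--Ascoli plus p-mGH convergence) to $b^\infty_p+b^\infty_{\hat p}$, which vanishes identically in $\R^k$, so the excess bound is free once the tangent is known to be Euclidean. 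Second, you identify the $W^{1,2}$-limit of the harmonic approximations as the affine function $v\mapsto -\la\bar p,v\ra$ by a rigidity argument (harmonic with $|\D u|\equiv 1$ on a Euclidean ball forces $u$ affine, via Bochner); the paper instead knows the limit a priori, since $b^i_p$ converges uniformly to the explicit Busemann function of the limit ray, and the harmonic approximations are $L^\infty$-close to $b^i_p$, so no rigidity is needed. Both of your variants are mathematically sound, but the paper's choices make the argument self-contained within its own preliminaries; if you replace the Abresch--Gromoll appeal by the soft tangent-cone argument you obtain essentially the published proof.
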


\begin{proof}
\textbf{Step 1}.
Fix $p,q \in X$. Combining Theorem \ref{RCD-reg}, Remark \ref{rem:LIH}, Theorem \ref{thm:def1def2}  and Lemma \ref{lem:ExUnGeo} we get that  for $\mm$-a.e. $x \in X$
\begin{itemize}
\item  we can find unique geodesics     $\gamma^{xp},\gamma^{xq} \in \geo(X)$ such that  $\gamma^{xp}_{0}=\gamma^{xq}_{0}=x$, $\gamma^{xp}_{1}=p$, $\gamma^{xq}_{1}=q$, and both $\gamma^{xp},\gamma^{xq}$ are extendable beyond $x$ in the sense of  Lemma \ref{lem:ExUnGeo},  so we  can assume that $\gamma^{xp},\gamma^{xq}$ could be extended to $\hat{p}:=\gamma^{xp}_{-\varepsilon}, \hat{q}:=\gamma^{xq}_{-\varepsilon}$ respectively, for some $\varepsilon>0$,
\item both the angles $\angle pxq$ and $\angle \gamma^{xp} x \gamma^{xq}$ exist in the sense of Definitions \ref{def:anglepxq}, \ref{def:angle-1}  respectively,  and  $\angle pxq=\angle \gamma^{xp} x \gamma^{xq}$,
\item $x\in X$ is a Lebesgue point for $\la \nabla r_{p}, \nabla r_{q}\ra$ so that  
\begin{equation}\label{eq:LebsesqueAngle}
 \cos \angle pxq :=  \la \nabla  r_{p}, \nabla r_{q}\ra (x)= \lim_{r\downarrow 0} \frac 1{\mm(B_{r}(x))}\int_{B_{r}(x)} \la \nabla  r_{p}, \nabla r_{q}\ra\,\d \mm,
 \end{equation}
\item the tangent to $X$ at $x$ is unique and euclidean.
\end{itemize}
From the locality of the angle (see Remark \ref{rem:restrAngleGeo}) we know that 
\begin{equation}\label{eq:localProof}
 \angle pxq=\angle \gamma^{xp} x \gamma^{xq} = \angle  (\gamma^{xp}|_{0}^{s}) \,  x  \,    (\gamma^{xq}|_{0}^{t}), \quad \forall s,t \in (0,1),
\end{equation}
where $\left(\gamma |_{0}^{s}\right)_{t}:= \gamma_{st}$ for all $t \in [0,1]$.
\\Let $r_{i}\downarrow 0$ be any sequence and let  $(X, r_{i}^{-1} \d, \mm^{x}_{r_{i}}, x)$ be the corresponding sequence of rescaled spaces. Since by assumption $x$ is regular, we know that  $(X, r_{i}^{-1} \d, \mm^{x}_{r_{i}}, x)$ p-mGH converge
to $(\R^{k}, \d_{E}, {\mathcal L}_{k}, O)$ for some $k=k(x)\in \N \cap [1,N]$. Since by assumption both $\gamma^{xp}$ and $\gamma^{xq}$ are extendable beyond $x$, they converge in p-GH sense to half lines  $\ell_{p}, \ell_{q}$ in $\R^{k}$ such that
$O\in \ell_{p}\cap \ell_{q}$ and both $\ell_{p},\ell_{q}$ are extendable to full lines of $\R^{k}$. We parametrize such half lines on $[0,+\infty)$   such that for every $t>0$ one has that  $\ell_{p}(t), \ell_{q}(t)$ are the limit points of $\gamma^{xp}(r_{i} t)$, $\gamma^{xq}(r_{i} t)$ respectively.     Denote by $\bar{p}=\ell_{p}(1), \bar{q}=\ell_{q}(1)\in \R^{k}$ be the limit points of $\gamma^{xp}(r_{i}), \gamma^{xq}(r_{i})$. By the uniqueness of the tangent space, the parametrized half lines $\ell_{p}, \ell_{q}$ and the points $\bar{p},\bar{q} \in \R^{k}$ do not depend on the choice of the   rescaling sequence $(r_{i})$.
\\
Let $\hat{\ell}_{p}, \hat{\ell}_{q}$ be the half lines in $\R^{k}$ antipodal to   ${\ell}_{p}, {\ell}_{q}$ respectively; in other words $\ell_{p}\cup \hat{\ell}_{p}$ and $\ell_{q}\cup \hat{\ell}_{q}$ are straight lines in $\R^{k}$ intersecting at $O$.  We parametrize $\hat{\ell}_{p}, \hat{\ell}_{q}$ on $(-\infty,0]$   such that $\frac{d}{dt}|_{t=t_{1}} \hat{\ell}_{p}(t)=\frac{d}{dt}|_{t=t_{2}}\ell_{p}(t), \frac{d}{dt}|_{t=t_{1}} \hat{\ell}_{q}(t)=\frac{d}{dt}|_{t=t_{2}}\ell_{q}(t)$, for all $t_{1}<0<t_{2}$.
\\

\textbf{Step 2}. We claim that 
\begin{equation}\label{eq:pfstep2}
\angle \gamma^{xp} x \gamma^{xq}= \angle p x q = \angle \bar{p} O \bar{q} = \angle \ell_{p} O \ell_{q}.
\end{equation}
Since the first identity is true by construction, and the last is trivially true because the ambient space is $\R^{k}$, it is enough to show that   $\angle p x q = \angle \bar{p} O \bar{q}$.
Given any sequence of rescalings $r_{i}\downarrow 0$,  let $\d^{i}(\cdot, \cdot):= \frac{1}{r_{i}} \d(\cdot, \cdot)$ and define
\begin{equation}\label{eq:defbip}
b^{i}_{p}(\cdot):= \d^{i}(p, \cdot)-\d^{i}(p,x), \qquad  b^{i}_{q}(\cdot):= \d^{i}(q, \cdot)-\d^{i}(q,x).
\end{equation}
Set also $b^{\infty}_{p},b^{\infty}_{q}:\R^{k} \to \R$ to be the Busemann functions associated to $\ell_{p},\ell_{q}$, i.e.  
\begin{equation}\label{eq:defbipinf}
b^{\infty}_{p}(\cdot):= \lim_{t\to +\infty}  t- \d_{E}(\ell_{p}(t), \cdot) , \qquad b^{\infty}_{q}(\cdot):= \lim_{t\to +\infty}  t- \d_{E}(\ell_{q}(t), \cdot).
\end{equation}
Since by construction we know that $(X,\d^{i},x)\to (\R^{k}, \d_{E}, O)$ in p-GH sense, $\gamma^{xp}_{r_{i} t} \to \ell_{p}(t)$ and  $\gamma^{xq}_{r_{i} t} \to \ell_{q}(t)$ for every $t>0$, it follows that
\begin{equation}\label{eq:bibiinfpoint}
b^{i}_{p}\to  b^{\infty}_{p}, \quad  b^{i}_{q} \to  b^{\infty}_{q} \quad \text{pointwise in the sense of Definition \ref{def:pointConv}.}
\end{equation} 
More strongly, since $b^{i}_{p}$ are all Lipschitz with unit Lipschitz constant, by an Arzel\'a-Ascoli procedure (see for instance   in \cite[Proposition 2.12]{MN-S}) we get that the convergences are uniform on bounded subsets, in the sense of  Definition \ref{def:pointConv}.  In particular, since the measures $\mm^{x}_{r_{i}}$ are converging weakly to ${\mathcal L}_{k}$ we get that
\begin{equation}\label{eq:bibinfL2}
b^{i}_{p}\to  b^{\infty}_{p}, \quad  b^{i}_{q} \to  b^{\infty}_{q} \quad \text{strongly in $L^{2}$ in the sense of Definition \ref{def:L2conv}.}
\end{equation} 
Define $b^{i}_{\hat{p}}, b^{\infty}_{\hat{p}},  b^{i}_{\hat{q}}, b^{\infty}_{\hat{q}}$ analogously to \eqref{eq:defbip}-\eqref{eq:defbipinf}:
\begin{align}
b^{i}_{\hat{p}}(\cdot):= \d^{i}(\hat{p}, \cdot)-\d^{i}(\hat{p},x),  \quad  &\quad  b^{i}_{\hat{q}}(\cdot):= \d^{i}(\hat{q}, \cdot)-\d^{i}(\hat{q},x), \nonumber \\
b^{\infty}_{\hat{p}}(\cdot):= \lim_{t\to -\infty}  -t- \d_{E}(\hat{\ell}_{p}(t), \cdot) , \quad & \quad  b^{\infty}_{\hat{q}}(\cdot):= \lim_{t\to -\infty}  -t- \d_{E}(\hat{\ell}_{q}(t), \cdot). \nonumber
\end{align}
With analogous  arguments  as above we get
\begin{equation*}
b^{i}_{\hat{p}}\to  b^{\infty}_{\hat{p}}, \quad  b^{i}_{\hat{q}} \to  b^{\infty}_{\hat{q}} \quad \text{uniformly on bounded subsets  in the sense of  Definition \ref{def:pointConv}.}
\end{equation*} 
Since we are working in the euclidean space $\R^{k}$, it is not difficult to see that  $b^{\infty}_{p}+b^{\infty}_{\bar{p}}=0$ and  $b^{\infty}_{q}+b^{\infty}_{\bar{q}}=0$. Note that such equalities holds more generally in manifolds with non-negative Ricci curvature: the argument, used in the proof of the Cheeger-Gromoll Splitting Theorem, goes via maximum principle; here in any case one can argue more directly by using the geometry of the euclidean space. It follows that $ b^{i}_{p}+ b^{i}_{\hat{p}} \to 0$ and $ b^{i}_{p}+ b^{i}_{\hat{p}} \to 0$  uniformly on bounded sets in the sense of Definition \ref{def:pointConv}. Hence  there exists a function $\Phi(R|K,N)$ satisfying  $\lim_{R\to +\infty}\Phi(R|K,N)=0$ for fixed $K,N$, such that $0\leq { b}^i_{{p}}(y)+{ b}^i_{\hat{p}}(y)\leq \Phi(\frac1{r_i}|K,N)$ and $0\leq { b}^i_{{q}}(y)+{ b}^i_{\hat{q}}(y)\leq \Phi(\frac1{r_i}|K,N)$ for any $y\in  B_{1}^{\d^i}(x)$.

Using Proposition \ref{lemma-harmonicapprox},  for every $i \in \N$ we can construct harmonic approximations  ${\bf b}^{i}_{p}, {\bf b}^{i}_{q}$   of $b^{i}_{p}, b^{i}_{q}$, respectively, in  the unit ball $B_{1}^{\d^{i}}(x)$ of the space $(X,\d^{i},\mm^{x}_{r_{i}})$. 
Since $\d^{i}(p,x)=\frac{1}{r_{i}} \d(p,x)\to \infty$, $\d^{i}(q,x)=\frac{1}{r_{i}} \d(q,x)\to \infty$  and   the spaces $(X,\d^{i},\mm^{x}_{r_{i}})$ are RCD$^{*}(r_{i}^{2}K,N)$, so in particular RCD$^{*}(-1,N)$ for $i$ large enough, we infer that
\begin{align}
&\|{\bf b}^{i}_{p}- b^{i}_{p}\|_{L^\infty(B_1^{d_{i}}(x))}+ \frac{1}{\mm^{x}_{r_{i}}(B_{1}^{\d_{i}}(x))} \int_{B_{1}^{\d_{i}}(x)}|\D( {\bf b}^{i}_{p}-  b^{i}_{p})| ^{2} \, \d \mm^{x}_{r_{i}} \to 0 \quad  \text{as } i \to \infty, \label{eq:HarmApproxip}\\ 
&\|{\bf b}^{i}_{q}- b^{i}_{q}\|_{L^\infty(B_1^{d_{i}}(x))}+ \frac{1}{\mm^{x}_{r_{i}}(B_{1}^{\d_{i}}(x))} \int_{B_{1}^{\d_{i}}(x)}|\D( {\bf b}^{i}_{q}-  b^{i}_{q})| ^{2} \, \d \mm^{x}_{r_{i}} \to 0 \quad  \text{as } i \to \infty. \label{eq:HarmApproxiq}
\end{align}
The combination of \eqref{eq:bibinfL2}, \eqref{eq:HarmApproxip} and \eqref{eq:HarmApproxiq} yields
\begin{equation*}
{\bf b}^{i}_{p} \llcorner B_{1}^{\d_{i}}(x) \to  b^{\infty}_{p}  \llcorner B_{1}^{\d_{E}}(O), \quad {\bf b}^{i}_{q}  \llcorner B_{1}^{\d_{i}}(x) \to  b^{\infty}_{q}  \llcorner B_{1}^{\d_{E}}(O) \quad \text{strongly in $L^{2}$.} 
\end{equation*} 
Since by construction ${\bf \Delta} {\bf b}^{i}_{p} \llcorner B_{1}^{\d_{i}}(x)=0$, by Proposition  \ref{prop-1order-converge} we get that 
\begin{equation*}
{\bf b}^{i}_{p} \llcorner B_{1}^{\d_{i}}(x) \to  b^{\infty}_{p}  \llcorner B_{1}^{\d_{E}}(O), \quad {\bf b}^{i}_{q}  \llcorner B_{1}^{\d_{i}}(x) \to  b^{\infty}_{q}  \llcorner B_{1}^{\d_{E}}(O) \quad \text{strongly in $W^{1,2}$}. 
\end{equation*} 
But then the gradient estimates in \eqref{eq:HarmApproxip}-\eqref{eq:HarmApproxiq} give that
\begin{equation}\label{eq:bibinfW12}
{b}^{i}_{p} \llcorner B_{1}^{\d_{i}}(x) \to  b^{\infty}_{p}  \llcorner B_{1}^{\d_{E}}(O), \quad {b}^{i}_{q}  \llcorner B_{1}^{\d_{i}}(x) \to  b^{\infty}_{q}  \llcorner B_{1}^{\d_{E}}(O) \quad \text{strongly in $W^{1,2}$}. 
\end{equation} 
In particular, for every $\rho\in (0,1)$ we have
\begin{equation}\label{eq:intanglei}
\lim_{i \to \infty}  \frac{1}{ \mm^{x}_{r_{i}} (B^{\d_{i}}_{\rho}(x))} \int_{B^{\d_{i}}_{\rho}(x)} \la \nabla b^{i}_{p},  \nabla b^{i}_{q} \ra \, \d  \mm^{x}_{r_{i}}  = \frac{1}{{\mathcal L}_{k}(B^{\d_{E}}_{\rho}(O))} \int_{B^{\d_{E}}_{\rho}(O)} \la \nabla b^{\infty}_{p},  \nabla b^{\infty}_{q} \ra \, \d {\mathcal L}_{k}.
\end{equation}
We now analyze the two sides of  \eqref{eq:intanglei}. Recalling  \eqref{eq:LebsesqueAngle}, from the very definitions of $\mm^{x}_{r_{i}}$ and of $\d^{i}$ it follows that
\begin{align}
\lim_{i \to \infty}  \frac{1}{ \mm^{x}_{r_{i}} (B^{\d_{i}}_{ \rho}(x))} \int_{B^{\d_{i}}_{\rho}(x)} \la \nabla b^{i}_{p},  \nabla b^{i}_{q} \ra \, \d  \mm^{x}_{r_{i}}& =  \lim_{i \to \infty}   \frac{1}{ \mm(B^{\d}_{r_{i} \rho}(x))} \int_{B^{\d}_{r_{i}\rho}(x)} \la \nabla b_{p},  \nabla b_{q} \ra \, \d  \mm  \nonumber \\
 &=\angle p x q \label{eq:intangleii}.
\end{align}
On the other hand, since $b^{\infty}_{p}, b^{\infty}_{q}$ are the Busemann functions of the lines $\ell_{p}, \ell_{q}$ in $\R^{k}$ it is readily seen that  
\begin{equation} \label{eq:intangleeucl}
\lim_{\rho \downarrow 0} \frac{1}{{\mathcal L}_{k}(B^{\d_{E}}_{\rho}(O))} \int_{B^{\d_{E}}_{\rho}(O)} \la \nabla b^{\infty}_{p},  \nabla b^{\infty}_{q} \ra \, \d {\mathcal L}_{k} = \angle \bar{p} O \bar{q}.
\end{equation}
Putting together \eqref{eq:intanglei}, \eqref{eq:intangleii}  and  \eqref{eq:intangleeucl} finally yields
\begin{align*}
 \angle \bar{p} O \bar{q} &=   \lim_{\rho \downarrow 0} \frac{1}{{\mathcal L}_{k}(B^{\d_{E}}_{\rho}(O))} \int_{B^{\d_{E}}_{\rho}(O)} \la \nabla b^{\infty}_{p},  \nabla b^{\infty}_{q} \ra \, \d {\mathcal L}_{k} =  \lim_{\rho \downarrow 0}  \lim_{i \to \infty}  \frac{1}{ \mm^{x}_{r_{i}} (B^{\d_{i}}_{\rho}(x))} \int_{B^{\d_{i}}_{\rho}(x)} \la \nabla b^{i}_{p},  \nabla b^{i}_{q} \ra \, \d  \mm^{x}_{r_{i}}  \\
 &= \angle p x q,
\end{align*}
as desired.
\\

\textbf{Step 3}.  We claim that
\begin{equation}\label{eq:claimStep3}
 \angle p x q=\lim_{t \downarrow 0} \arccos \frac{2t^2-\d(\gamma^{xp}_{t}, \gamma^{xq}_{t})^{2}}{2t^{2}}.
\end{equation}
To this aim, first of all observe that the cosine formula  in $\R^{k}$  ensures that
\begin{equation}\label{eq:claimStep3Rk}
 \angle \bar{p} O \bar{q}= \arccos \frac{2-\d_{E}(\bar{p},  \bar{q})}{2}.
\end{equation}
Let now $t_{i} \downarrow 0$ be any  sequence and set $r_{i}:=t_{i}$. Define the rescaled spaces    $(X,  \d^{i}, x)$  as above, with $\d^{i}(\cdot,\cdot):=r_{i}^{-1} \d(\cdot,\cdot)$.
Notice first of all that the p-GH convergence of  $(X,  \d^{i}, x)$ to $(\R^{k}, \d_{E}, O)$ ensures that
\begin{equation}\label{eq:digammapq}
\lim_{i \to \infty} \d^{i}(\gamma^{xp}_{t_{i}}, \gamma^{xq}_{t_{i}})= \d_{E}(\bar{p}, \bar{q}).
\end{equation}
It follows that
\begin{align}
\lim_{i \to \infty} \frac{2t_{i}^2-\d(\gamma^{xp}_{t_{i}}, \gamma^{xq}_{t_{i}})^{2}}{2t_{i}^{2}}&=\lim_{i \to \infty} \frac{2-\d^{i}(\gamma^{xp}_{t_{i}}, \gamma^{xq}_{t_{i}})^{2}}{2} \overset{\eqref{eq:digammapq} }{=} \frac{2-\d_{E}(\bar{p}, \bar{q})}{2} \nonumber \overset{\eqref{eq:claimStep3Rk}}{=}\cos \left(\angle \bar{p} O \bar{q} \right) \nonumber  \\ 
&\overset{\eqref{eq:pfstep2}}{=}\cos \left(\angle pxq \right). \label{eq:Step3i} 
\end{align}
Since the sequence $t_{i}\downarrow 0$ was arbitrary, \eqref{eq:Step3i}  implies  \eqref{eq:claimStep3}.

The thesis then follows by combining \eqref{eq:pfstep2} and \eqref{eq:claimStep3}.

\end{proof}

\begin{remark}
The cosine formula  in $\R^{k}$  ensures that
\begin{equation}\label{eq:claimStep3Rk1}
 \angle \bar{p} O \bar{q}= \arccos \frac{s^2+t^2-\d_{E}(\ell_{p}(s), \ell_{q}(t))^{2}}{2st}  \quad \forall s,t >0.
\end{equation}
It is natural to ask if the same formula holds in the non-smooth case. This remains an open problem even for Ricci limit spaces, so a fortiori in $\rcdkn$ spaces.
\\ Here let us briefly mention that with analogous arguments as above one can show the weaker statement
\begin{equation}\label{eq:claimStep31}
 \angle p x q=\lim_{s,t \downarrow 0, \frac{1}{C} \leq \frac st \leq C} \arccos \frac{s^2+t^2-\d(\gamma^{xp}_{s}, \gamma^{xq}_{t})^{2}}{2st}, \quad \text{for every } C\geq 1.
\end{equation}

To this aim let   $s_{i}\downarrow 0, t_{i} \downarrow 0$ be any two sequences. Up to subsequences, we may assume that for all $i\in \N$ it holds either $0\leq s_{i} \leq t_{i}$ or   $0\leq t_{i} \leq s_{i}$. Without loss of generality we may assume the first case.  Up to further subsequences we may also assume that $s_{i}/t_{i}$ has a limit $\bar{s}\in (0,1]$ as $i\to \infty$.
Let  $r_{i}:=t_{i}\downarrow 0$ and define  the rescaled spaces  $(X,  \d^{i}, x)$  as above, with $\d^{i}(\cdot,\cdot):=r_{i}^{-1} \d(\cdot,\cdot)$.
Calling $s_{i}':=r_{i}^{-1} s_{i} \to \bar{s}$,  $t_{i}':=r_{i}^{-1} t_{i}=1$,  the p-GH convergence of  $(X,  \d^{i}, x)$ to $(\R^{k}, \d_{E}, O)$ ensures
\begin{equation*}
\lim_{i \to \infty} \d^{i}(\gamma^{xp}_{s_{i}}, \gamma^{xq}_{t_{i}})= \d_{E}(\ell_{p}(\bar{s}), \bar{q}).
\end{equation*}
Then we have
\begin{align}
\lim_{i \to \infty} \frac{s_{i}^2+t_{i}^2-\d(\gamma^{xp}_{s_{i}}, \gamma^{xq}_{t_{i}})^{2}}{2s_{i}t_{i}}&=\lim_{i \to \infty} \frac{(s'_{i})^2+(t'_{i})^2-\d^{i}(\gamma^{xp}_{s_{i}}, \gamma^{xq}_{t_{i}})^{2}}{2s'_{i}t'_{i}} \nonumber\\
&=  \lim_{i \to \infty} \frac{(s'_{i})^2+(t'_{i})^2-\d_{E}(\ell_{p}(s'_{i}), \ell_{q}(t_{i}'))^{2}}{2s'_{i}t'_{i}} \nonumber \\
&\overset{\eqref{eq:claimStep3Rk1}}{=}\cos \left(\angle \bar{p} O \bar{q} \right)  \nonumber \\
&\overset{\eqref{eq:pfstep2}}{=}\cos \left(\angle pxq \right). \label{eq:Step3i1} 
\end{align}
Since the sequences $s_{i},t_{i}\downarrow 0$ were arbitrary, \eqref{eq:Step3i1}  implies  \eqref{eq:claimStep31}.
\end{remark}

\def\cprime{$'$} \def\cprime{$'$}

\end{document}